\documentclass[11pt]{article}

\usepackage[margin=1in]{geometry}
\usepackage{amsmath,amssymb,amsthm,mathtools,bm}
\usepackage{mathrsfs}
\usepackage{array}
\usepackage{booktabs}
\usepackage{enumitem}
\usepackage{microtype}
\usepackage{xcolor}
\usepackage{graphicx}
\usepackage{subcaption}
\usepackage[round]{natbib}
\usepackage[colorlinks=true,linkcolor=blue,citecolor=blue,urlcolor=blue]{hyperref}
\usepackage[nameinlink,capitalize]{cleveref}

\hypersetup{
  pdftitle={Two Sample Sizes in Test-Time LLM Ensembling: Buffered ERM and Minimax Theory},
  pdfauthor={Zhi Zhang and Arash A. Amini}
}

\newtheorem{theorem}{Theorem}[section]
\newtheorem{proposition}[theorem]{Proposition}
\newtheorem{lemma}[theorem]{Lemma}
\newtheorem{corollary}[theorem]{Corollary}
\newtheorem{assumption}[theorem]{Assumption}

\newtheorem{conjecture}[theorem]{Conjecture}
\newtheorem{remark}[theorem]{Remark}

\crefname{assumption}{Assumption}{Assumptions}
\Crefname{assumption}{Assumption}{Assumptions}
\crefname{conjecture}{Conjecture}{Conjectures}
\Crefname{conjecture}{Conjecture}{Conjectures}

\newcommand\pr{\mathbb P}
\newcommand{\Pp}{\mathbb P}
\newcommand{\E}{\mathbb E}
\newcommand{\R}{\mathbb R}

\newcommand{\one}{\mathbf 1}
\newcommand{\sgn}{\operatorname{sgn}}
\newcommand{\TV}{\operatorname{TV}}
\newcommand{\KL}{\operatorname{KL}}
\newcommand{\Bin}{\operatorname{Bin}}
\newcommand{\Ber}{\operatorname{Ber}}
\newcommand{\Var}{\operatorname{Var}}
\newcommand{\calQ}{\mathcal Q}
\newcommand{\calA}{\mathcal A}

\newcommand{\calG}{\mathcal G}
\newcommand{\calF}{\mathcal F}

\newcommand{\calR}{\mathcal R}

\newcommand{\eps}{\varepsilon}

\newcommand{\abs}[1]{\left|#1\right|}
\DeclarePairedDelimiter{\norm}{\lVert}{\rVert}
\newcommand{\inner}[2]{\left\langle#1,#2\right\rangle}
\newcommand{\dd}{\,\mathrm d}
\newcolumntype{L}[1]{>{\raggedright\arraybackslash}p{#1}}

\DeclareMathOperator*{\argmin}{argmin}

\title{Two Sample Sizes in Test-Time LLM Ensembling:\\
Buffered ERM and Minimax Theory}
\author{Zhi Zhang\quad and\quad
Arash A. Amini\thanks{Emails: \href{mailto:zzh237@g.ucla.edu}{\texttt{zzh237@g.ucla.edu}} and \href{mailto:aaamini@ucla.edu}{\texttt{aaamini@ucla.edu}}.}\\[3pt]
{\normalsize Department of Statistics and Data Science, UCLA}}
\date{}

\begin{document}
\maketitle

\begin{abstract}
Test-time scaling can improve the performance of large language models by generating multiple answers to a question and aggregating them.
Weighted ensembles extend this idea by drawing answers from several models, with the weights determining how often each model is used.
Learning these weights from a training dataset involves two sample sizes: $Q$ labeled questions and $N$ generations per model and question.
We study their interplay through the clean risk, the population error of the ensemble's infinite-generation voting rule.
Under a margin condition with exponent $\beta$, we show that empirical risk minimization (ERM) with a calibrated margin buffer achieves excess risk of order $Q^{-1/2}+N^{-\beta/2}$, up to logarithmic factors, for fixed numbers of models and candidate answers.
To investigate the optimality of this dependence on $Q$ and $N$, we analyze a simplified submodel with two models and two possible answers per question.
We characterize its minimax excess risk up to constants by a Hellinger modulus for a binomial-mixture experiment.
Polynomial approximation and spectral analysis reveal three qualitatively different regimes.
When questions are relatively scarce, the $Q^{-1/2}$ term governs learning and the general buffered ERM is optimal.
As questions become more abundant, we reach an intermediate regime in which pooling generation counts across questions improves upon estimating each answer distribution separately, with only logarithmic gains from additional questions.
With sufficiently many questions, we enter the third regime where the minimax excess risk reaches an $N^{-\beta}$ identification floor that persists even with infinitely many questions.
The submodel's lower bounds also apply to general ensembles, matching the buffered-ERM upper bound up to logarithmic factors when questions are scarce and in parts of the intermediate regime.
Questions remain about the transitions between regimes and the construction of matching pooled estimators for general ensembles.

\end{abstract}

\section{Introduction}

Generating more than one response at inference time has become a standard way to improve the performance of large language models.  A natural way to combine repeated generations is by \emph{plurality voting}, that is, returning the final answer that appears most often. This voting rule is used in self-consistency~\citep{WangSelfConsistency}.  Alternatively, a verifier or reward model can score the candidates and select the highest-scoring one, as in best-of-$N$ selection \citep{CobbeVerifiers,HuangBestN,MukherjeeOT}.  Although these procedures use the samples differently, they share a statistical question: how much of the model's infinite-generation behavior can be recovered from finitely many generations?  One way to study this question is to consider test-time procedures with parameters learned from data; we introduce one such procedure below.

For such a learning problem, a first sample size is $N$, the number of generations used to estimate a model's answer distribution on each training question.  There is, however, a second sample size that is easy to overlook.  A test-time procedure is learned or evaluated on a finite benchmark, but its intended performance concerns a broader population of questions.  We model the benchmark questions as $Q$ draws from a population distribution $\calQ$.  Thus $Q$ controls generalization beyond the observed questions, whereas $N$ controls how well the answer distribution on each question is known.  The two sampling layers are nested: a benchmark score averages over questions, but each term in that average is itself computed from finitely many generations.

We study this interaction in the weighted multi-model setting of Best-of-$\infty$ \citep{KomiyamaBoInf}, which combines answers by voting without a verifier at deployment.  Models may favor different wrong answers to the same question.  A mixture can then make the correct answer the most probable even when it is not the most probable under any individual model (see Table~\ref{tab:aime-example} for an empirical illustration).

Suppose $K$ fixed LLMs can be queried on a question $q$.  The question has a finite candidate-answer set $\calA(q)$, whose size is uniformly bounded by $A$, and model $i$ has an infinite-generation answer distribution $p_i(\cdot\mid q)$ on $\calA(q)$.  Given weights $w\in\Delta_K$, the ensemble distribution is
\begin{equation}
\label{eq:intro-mixture}
p_w(a\mid q)
:=
\sum_{i=1}^K w_i p_i(a\mid q).
\end{equation}
Practically, this corresponds to choosing a model independently according to $w$ for each generation and pooling the resulting answers; equivalently, one may allocate asymptotic fractions $w_i$ of the generation budget to the models.  The single-model problem is recovered as a special case.

The weights also determine generation cost.  If a generation from model $i$ costs $c_i$, the mean cost per generation is $\bar c(w):=\sum_{i=1}^K w_i c_i$.  An ensemble that retains good population voting accuracy while shifting weight toward cheaper models reduces this mean cost.

Assume each question $q$ has a designated gold answer $a^\star(q)\in\calA(q)$.  For fixed weights $w$, the associated \emph{ideal voting rule} selects the most probable answer under the infinite-generation mixture distribution $p_w(\cdot\mid q)$.  The rule is declared correct when $a^\star(q)$ is the unique maximizer, counting ties as errors.  Define the \emph{clean margin}
\begin{equation}
\label{eq:intro-clean-margin}
M_q(w)
:=
p_w(a^\star(q)\mid q)
-
\max_{a\ne a^\star(q)}p_w(a\mid q).
\end{equation}
The error probability of this ideal rule over the population of questions is therefore
\begin{equation}
\label{eq:intro-clean-risk}
R(w)
:=
\Pp_{q\sim\calQ}\{M_q(w)\le0\}.
\end{equation}
We call \eqref{eq:intro-clean-risk} the \emph{clean} risk because it treats the answer distributions $p_i(\cdot\mid q)$ as known.  Our statistical problem is to learn weights with small clean risk from finite benchmark data.  Our target is a clean-risk minimizer
\begin{equation}
\label{eq:intro-oracle}
w^\star\in\argmin_{w\in\Delta_K}R(w).
\end{equation}

Neither layer of this risk is observed directly.  In practice, we observe labeled questions
\[
q_1,\ldots,q_Q\overset{\mathrm{iid}}{\sim}\calQ
\]
where ``labeled'' means that the benchmark supplies each $q_j$ together with its gold answer $a^\star(q_j)$.  We also observe $N$ sampled answers from every model on every question.  The raw generations for model $i$ on question $q_j$ can be summarized by the multinomial count vector
\[
X_{j,i}
:=
(X_{j,i,a}:a\in\calA(q_j))
\sim
\operatorname{Multinomial}\bigl(N,p_i(\cdot\mid q_j)\bigr),
\qquad
\widehat p_i(\cdot\mid q_j)=\frac{X_{j,i}}{N}.
\]
Here, $X_{j,i,a}$ is the number of times model $i$ generated answer $a$ on question $q_j$.

To make these ideas concrete, Table~\ref{tab:aime-example} gives an illustrative example from
a real experiment.
The selected question, Problem~6 of the 2024 American Invitational Mathematics Examination (AIME~I), asks for the number of monotone paths across an $8\times8$ grid with exactly four changes of direction; its gold answer is $294$.  In $N=25$ generations per model, Gemma~3~12B produced $12$ distinct answers and Llama~4~Maverick produced $3$.  Their empirical modes are different wrong answers, $1225$ and $810$, while their equal-weight empirical mixture has the gold answer as its unique mode.  The clean rankings remain unknown and must be inferred from such finite samples.

\begin{table}[t]
\centering
\small
\setlength{\tabcolsep}{3pt}
\begin{tabular}{@{}l*{13}{r}r@{}}
\toprule
& \multicolumn{13}{c}{Answer (empirical frequencies in \%)} & \\
\cmidrule(lr){2-14}
Distribution & 70 & 147 & 273 & $294^\star$ & 495 & 810 & 1225 & 1820 & 2730 & 4116 & 7350 & 12870 & 44100 & \raisebox{4pt}[0pt][0pt]{\shortstack[r]{Observed\\[1pt]support}} \\
\midrule
Gemma 3 12B & 4 & 12 & 0 & 20 & 8 & 4 & \textbf{24} & 4 & 4 & 4 & 4 & 8 & 4 & 12 \\
Llama 4 Maverick & 0 & 0 & 4 & 44 & 0 & \textbf{52} & 0 & 0 & 0 & 0 & 0 & 0 & 0 & 3 \\
Equal mixture & 2 & 6 & 2 & \textbf{32} & 4 & 28 & 12 & 2 & 2 & 2 & 2 & 4 & 2 & 13 \\
\bottomrule
\end{tabular}
\normalsize
\caption{Empirical answer frequencies for AIME~2024~I, Problem~6, with $N=25$ generations per model at temperature $0.8$.  Both models were asked to reason step by step and return a boxed final answer; all $50$ responses contained complete boxed answers and finished below their token caps.  Every observed answer value is shown.  The star marks the gold answer, zeros denote unobserved answers, and boldface marks each empirical mode.  The last row is the equal-weight mixture.}
\label{tab:aime-example}
\end{table}

If the clean distributions $p_i$ were known, ordinary empirical risk minimization over the sampled questions would minimize
\begin{equation}
\label{eq:intro-clean-erm}
\widehat R_Q^{\mathrm{clean}}(w)
:=
\frac1Q\sum_{j=1}^Q\one\{M_{q_j}(w)\le0\}.
\end{equation}
In practice, $p_i$ is replaced by $\widehat p_i$, giving the empirical margin $\widehat M_{q_j}(w)$ which replaces the clean margin in~\eqref{eq:intro-clean-erm}.
Finite-$N$ noise can then change whether a question is considered solved at a given $w$.  In particular, a favorable fluctuation may produce $\widehat M_{q_j}(w)>0$ even when $M_{q_j}(w)\le0$, causing zero-margin ERM to record an empirical success where the clean ensemble makes an error.  In other words, zero-margin ERM can exploit favorable fluctuations in the empirical margins and effectively overfit to the finite-$N$ generation noise.

This motivates the buffered estimator
\begin{equation}
\label{eq:intro-buffered-estimator}
\widehat w_\eps
\in
\argmin_{w\in\Delta_K}
\frac1Q\sum_{j=1}^Q
\one\{\widehat M_{q_j}(w)\le\eps\}.
\end{equation}
The choice $\eps=0$ recovers the Best-of-$\infty$ ERM introduced by \citet{KomiyamaBoInf}.  When $\eps>0$, a training question is counted as solved only if its empirical margin exceeds $\eps$; empirical margins in $(0,\eps]$ are treated as errors.  On the event
\begin{equation}
\label{eq:intro-uniform-event}
\sup_{j\le Q}\sup_{w\in\Delta_K}
\bigl|\widehat M_{q_j}(w)-M_{q_j}(w)\bigr|
\le\eps_N,
\end{equation}
we have, simultaneously for every training question and every weight vector,
\begin{equation}
\label{eq:intro-sandwich}
\one\{M_{q_j}(w)\le0\}
\le
\one\{\widehat M_{q_j}(w)\le\eps_N\}
\le
\one\{M_{q_j}(w)\le2\eps_N\}.
\end{equation}
The left inequality prevents the learned rule from claiming empirically certified successes that fail under the clean margin.  The right inequality shows that the price of this protection is confined to a narrow clean-margin band.  This buffered sandwich is the basic mechanism behind our general result.  We also record a corollary for deployment with finitely many generations (\cref{cor:finite-deployment}).

\subsection{Overview of the results}
\label{sec:overview}

Let us now give an overview of the main results, starting with a general bound on the excess risk of the buffered ERM; questions about the optimality of this bound lead to a scalar submodel and its minimax analysis.

\paragraph{Buffered ERM for general ensembles.}
Take
\[
\eps_N
\asymp
\sqrt{\frac{\log(KAQ/\delta)}{N}},
\]
where $A$ bounds the number of candidate answers, and $\delta > 0$ is some specified confidence level.  Under a one-sided margin condition at the clean-risk minimizer,
\[
\Pp\{0<M_q(w^\star)\le t\}
\le C_m t^\beta,
\]
the estimator in \eqref{eq:intro-buffered-estimator} (with $\eps=\eps_N$) satisfies
\begin{equation}
\label{eq:intro-main}
R(\widehat w_{\eps_N})-R(w^\star)
\lesssim
\sqrt{\frac{K\log(AQ)+\log(1/\delta)}{Q}}
+
C_m\left(\frac{\log(KAQ/\delta)}{N}\right)^{\beta/2}.
\end{equation}
The first term is generalization across questions.  The second is the cost of replacing population answer distributions by finite-generation estimates.  The proof combines \eqref{eq:intro-sandwich} with a hyperplane-arrangement bound for the clean margin classes.
Treating $K$, $A$, and the margin constants as fixed and suppressing logarithmic and confidence factors, the dependence on the two sample sizes is
\begin{equation}
\label{eq:intro-main-simplified}
R(\widehat w_{\eps_N})-R(w^\star)
=
\widetilde O\bigl(\, Q^{-1/2}+N^{-\beta/2}\, \bigr).
\end{equation}
This bound already exhibits a fast-rate effect in $N$.  Although each clean margin is estimated at the usual $N^{-1/2}$ scale, the margin condition converts this estimation error into an excess-risk contribution of order $N^{-\beta/2}$.  The exponent $\beta$ is fixed as $N$ and $Q$ grow, but it may be arbitrarily large; consequently, this term can decay faster than $N^{-1/2}$, or even $N^{-1}$.  This does not contradict standard parametric estimation limits: the excess risk depends only on questions near the zero-margin boundary, and the margin condition makes such questions increasingly rare.

The overall bound, of course, still contains the $Q^{-1/2}$ term.
This raises the key optimality question: to what extent is the dependence on $(Q,N)$ in \eqref{eq:intro-main-simplified} intrinsic, and in particular can the $N^{-\beta/2}$ term be improved by pooling information across questions?

\paragraph{A scalar model for the optimality question.}
To investigate this question, we isolate the simplest nontrivial submodel: querying two models producing two possible answers each.  Because the answer set may depend on $q$, we may relabel the gold answer on every question as ``$\mathrm{gold}$'' and the other answer as ``$\mathrm{other}$'' without loss of generality, that is, $\mathcal A(q) = \{\mathrm{gold}, \mathrm{other}\}$ for all $q$ and $a^\star(q) = \mathrm{gold}$.  Let $Z_q\in[-1,1]$ be a latent \emph{signed advantage} and suppose
\begin{equation}
\label{eq:intro-antisymmetric}
p_1(\mathrm{gold}\mid q)=\frac{1+Z_q}{2},
\qquad
p_2(\mathrm{gold}\mid q)=\frac{1-Z_q}{2}.
\end{equation}
For $w=(t,1-t)$, the clean margin is $(2t-1)Z_q$.  Thus every $t>1/2$ gives the same clean decisions: the ensemble is correct exactly when $Z_q>0$.  Every $t<1/2$ gives the opposite decisions and is correct exactly when $Z_q<0$, while $t=1/2$ ties on every question.  The learning problem therefore reduces to choosing between the two halves of the weight simplex, rather than estimating the magnitude of $t-1/2$.

Let $G$ denote the distribution of $Z_q$ when $q\sim\calQ$, that is, the distribution induced by the map $q\mapsto Z_q$.  Write $\sigma=+1$ for the action of choosing any $t>1/2$ and $\sigma=-1$ for choosing any $t<1/2$, and let $R_+(G)$ and $R_-(G)$ denote their respective population risks.  Their difference is the linear functional measuring the imbalance of population mass across zero:
\begin{equation}
\label{eq:intro-theta}
R_-(G)-R_+(G)
=
\theta(G)
:=
\int\sgn(z)\,\dd G(z).
\end{equation}
Hence the sign of $\theta(G)$ determines which half of the simplex is optimal, and $|\theta(G)|$ is the excess risk, or regret, incurred by choosing the wrong half.  Section~\ref{sec:scalar-model} gives the exact decision reduction; the minimax analysis below asks how well this sign can be learned from finite generations.

The general multinomial observation also simplifies.  If $X_{1q}$ and $X_{2q}$ count the gold answers from the two models, then
\[
S_q:=X_{1q}+N-X_{2q}
\]
is sufficient for $Z_q$.  For the sampled questions, write $Z_j:=Z_{q_j}$ and $S_j:=S_{q_j}$.  The reduced experiment can then be written as
\begin{equation}
\label{eq:intro-binomial-channel}
Z_1,\ldots,Z_Q\overset{\mathrm{iid}}{\sim}G,
\qquad
S_j\mid Z_j=z
\sim
\Bin\left(n,\frac{1+z}{2}\right),
\quad j=1,\ldots,Q,
\qquad n:=2N.
\end{equation}
Thus the raw generations first reduce to multinomial answer counts and then, under antisymmetry, to one binomial count per question.  We write $K_nG$ for the resulting marginal distribution of each $S_j$, where $K_n$ denotes the $n$-trial binomial channel.  The observed counts $S_1,\ldots,S_Q$ are iid from this nonparametric binomial-mixture distribution, and the ensemble problem has become indirect estimation of \eqref{eq:intro-theta} through the binomial channel.

The original learner also observes the question identities $q_1,\ldots,q_Q$.  We begin by studying the reduced \emph{count-only experiment}, in which only $S_1,\ldots,S_Q$ are retained.  This reduction is immediate for upper bounds, since an estimator may ignore the question identities, but it requires justification for lower bounds.  We later prove in \cref{thm:observed-transfer,cor:general-lower-transfer} that, over an unrestricted class of question-to-advantage maps, the count-only lower bounds remain valid when the questions are observed and therefore transfer to the general ensemble problem.

\paragraph{Exact scalar minimax characterization.}
On the two-sided margin class
\[
\calG_{\beta,L}
=
\{G: \; G([-t,t])\le Lt^\beta,\ 0<t\le1\},
\]
let $\calR^{\mathrm{dec}}_{Q,n}(\beta,L)$ denote the minimax expected regret for choosing between the two actions from $S_1,\ldots,S_Q$:
\[
\calR^{\mathrm{dec}}_{Q,n}(\beta,L)
:=
\inf_{\widehat\sigma}
\sup_{G\in\calG_{\beta,L}}
\E_G\!\left[
|\theta(G)|
\one\{\widehat\sigma\theta(G)<0\}
\right],
\qquad \widehat\sigma\in\{-1,+1\}.
\]
Writing $h$ for Hellinger distance, define
\[
\omega_{n,\beta,L}(\eps)
:=
\sup\left\{
|\theta(G)-\theta(H)|:
G,H\in\calG_{\beta,L},\
h(K_nG,K_nH)\le\eps
\right\}.
\]
This modulus measures the largest ambiguity in the functional $\theta$ among two mixing distributions whose observable count laws are within Hellinger distance $\eps$.  We prove the constant-factor characterization
\begin{equation}
\label{eq:intro-modulus}
\calR^{\mathrm{dec}}_{Q,n}(\beta,L)
\asymp
\omega_{n,\beta,L}(Q^{-1/2})
\end{equation}
for every pair $(Q,n)$.  Thus, up to constants, the minimax regret is exactly the ambiguity in $\theta$ remaining at Hellinger resolution $Q^{-1/2}$ after passage through the $n$-trial binomial channel.

Direct evaluations of the modulus~\eqref{eq:intro-modulus} help identify three regimes.  We begin with the two endpoints, in which one of $Q$ or $n$ remains fixed while the other tends to infinity.  At the finite-question endpoint, consider the hard-margin subclass of mixing distributions $G$ supported on $\{-1,+1\}$; this is equivalent to assuming that there are only ``two types'' of questions.  The count reveals the sign of $Z$ without error for every $n$, just as if infinitely many generations were available for each question.  Nevertheless, from only $Q$ sampled questions the learner must determine which of the two question types is more prevalent.  Evaluating the modulus on this subclass gives the unavoidable root-$Q$ lower bound uniformly in $(Q,n)$: $\calR^{\mathrm{dec}}_{Q,n}(\beta,L)\gtrsim Q^{-1/2}$.

At the opposite endpoint, fix $n$ and let $Q\to\infty$.  The observable count law $K_nG$ can then be learned arbitrarily accurately, so uncertainty from sampling questions disappears.  Yet $K_nG$ depends on $G$ only through its first $n$ moments.  Different mixing distributions can therefore induce exactly the same count law while having different values of $\theta$.  We show that the resulting zero-radius ambiguity is
\begin{equation}
\label{eq:intro-floor}
\omega_{n,\beta,L}(0)
\asymp
n^{-\beta}.
\end{equation}
Hence finitely many generations per question impose an $n^{-\beta}$ floor, no matter how large $Q$ becomes.  By monotonicity of the modulus and \eqref{eq:intro-modulus}, the same lower bound holds uniformly for all $(Q,n)$: $\calR^{\mathrm{dec}}_{Q,n}(\beta,L)\gtrsim n^{-\beta}$.

Between these two endpoints lies a subtler middle region.  Its behavior is governed by spectral properties of the Bernstein--Durrmeyer operator associated with the Bernstein polynomials $b_{k,n}(p)=\binom{n}{k}p^k(1-p)^{n-k}$ defining the binomial law.  We first approach this region from the upper-bound side, showing that the two endpoint rates $Q^{-1/2}$ and $n^{-\beta}$ continue to describe the minimax risk over nontrivial portions of the $(Q,n)$ plane.  We then use the same spectral structure to obtain matching lower bounds in a broad middle regime.

\paragraph{Pooling and the spectrum of the binomial channel.}
The endpoint bounds are attained by two different procedures.  A \emph{sign-then-aggregate} estimator replaces each count $S_j$ by $\sgn(S_j-n/2)$ and then chooses the majority side.  It satisfies
\[
\calR^{\mathrm{dec}}_{Q,n}(\beta,L)
\, \lesssim_{\beta,L}\,
Q^{-1/2}+n^{-\beta/2},
\]
the same dependence on $(Q,n)$ as buffered ERM in the general problem, up to logarithmic factors.  This procedure
compresses every question to one estimated sign before aggregating across questions.

With more questions, it can be better to retain the full count values.  A pooled polynomial-moment estimator approximates $\sgn$ by a degree-$d$ polynomial $P_d$ and uses each count $S_j$ to form an unbiased estimate of $P_d(Z_j)$, so their average estimates the functional $\int P_d\,\dd G$.  Its approximation bias is of order $d^{-\beta}$.  Its statistical variability is controlled by how strongly inversion of the binomial conditional-expectation operator $T_na:=\sum_{k=0}^n a_kb_{k,n}(\,\cdot\,)$ amplifies the polynomial modes of $P_d$; we call this inverse amplification the estimator's stability cost.  As shown in \cref{sec:durrmeyer}, the amplification factors are the inverse singular values of $T_n$, or equivalently the inverse square roots of the eigenvalues of $D_n=T_nT_n^*$, the Bernstein--Durrmeyer operator.  Writing $\kappa_{d,n}$ for the largest inverse singular value through degree $d$,
\[
\kappa_{d,n}
:=
\left[
\prod_{r=0}^{d-1}\frac{n+r+2}{n-r}
\right]^{1/2},
\]
we obtain the spectral upper bound
\begin{equation}
\label{eq:intro-spectral-upper}
\calR^{\mathrm{dec}}_{Q,n}(\beta,L)
\lesssim_{\beta,L}
\inf_{1\le d\le n}
\left\{
d^{-\beta}
+
\frac{\sqrt{n+1}\,\kappa_{d,n}}{\sqrt Q}
\right\}.
\end{equation}
For $d\le n/2$, $\log\kappa_{d,n}\asymp d^2/n$: higher-degree information is progressively more unstable, but it becomes estimable as the number of questions grows.

The same spectrum also produces matching lower bounds.  We construct pairs of mixing distributions whose first $d$ moments agree, whose targets differ by order $d^{-\beta}$, and whose remaining higher-order modes are strongly attenuated by the channel.  For $0<\beta\le1$, optimizing the cancellation degree gives the rigorous middle-regime result
\begin{equation}
\label{eq:intro-rigorous-middle}
\calR^{\mathrm{dec}}_{Q,n}(\beta,L)
\asymp_{\beta,L,\eta}
[n\log Q]^{-\beta/2},
\qquad
n^{\beta+1+\eta}\le Q\le e^{c_\eta n},
\end{equation}
for every fixed $\eta>0$.  The upper bound is attained by the pooled polynomial-moment estimator with degree $d\asymp\sqrt{n\log Q}$.  At the other endpoint, taking $d=n$ and using $\kappa_{n,n}\asymp2^nn^{-1/4}$ shows that the same estimator reaches the $n^{-\beta}$ floor once
\[
Q\gtrsim4^n n^{2\beta+1/2}.
\]

\paragraph{The rigorous rate picture and its scope.}
These results reveal three different sources of difficulty.  We call the range in which the unavoidable $Q^{-1/2}$ term dominates the \emph{question-limited regime}.  Once $Q$ is larger, pooling the count information can improve on per-question thresholding; we call this the \emph{spectral-pooling regime}.  Finally, when the count law is estimated accurately enough that finite-moment nonidentifiability becomes the bottleneck, the problem is \emph{identification-limited}.  The regions in which matching scalar rates are currently proved are summarized below; recall that $n=2N$.
\begin{center}
\small
\begin{tabular}{@{}L{0.20\linewidth}L{0.14\linewidth}L{0.28\linewidth}L{0.27\linewidth}@{}}
\toprule
Proved region & Minimax rate & Achieving procedure & Scope \\
\midrule
$Q\lesssim N^\beta$
& $Q^{-1/2}$
& Buffered ERM; scalar sign-then-aggregate
& Matching result for general ensembles, up to logarithmic factors \\
$N^{\beta+1+\eta}\lesssim Q\lesssim e^{c_\eta N}$
& $[N\log Q]^{-\beta/2}$
& Scalar pooled polynomial-moment estimator
& Scalar matching for $0<\beta\le1$; lower bound transfers generally \\
$Q\gtrsim4^{2N}\operatorname{poly}(N)$
& $N^{-\beta}$
& Scalar pooled polynomial-moment estimator
& Scalar matching; lower bound transfers generally \\
\bottomrule
\end{tabular}
\end{center}

The first row is a result for general ensembles: the buffered-ERM upper bound matches the transferred root-$Q$ lower bound, showing that buffered ERM is rate-optimal up to logarithmic factors in the question-limited regime.  In the other two rows, the matching upper procedures currently rely on the antisymmetric scalar structure.  There is nevertheless a useful general consequence in the polynomial-$Q$ portion of the middle regime when $0<\beta\le1$.  The transferred spectral lower bound is then of order $(N\log N)^{-\beta/2}$, while buffered ERM has excess risk of order $(\log N/N)^{\beta/2}$; the two differ by only a factor $(\log N)^\beta$.  Thus buffered ERM is minimax optimal up to logarithmic factors there.

This logarithmic near-optimality does not extend uniformly over the entire subexponential range.  For example, still for $0<\beta\le1$, if $Q=e^{N^a}$ with $0<a<1$, the transferred lower bound and the buffered-ERM upper bound scale respectively as $N^{-\beta(1+a)/2}$ and $N^{-\beta(1-a)/2}$.  Their ratio is $N^{a\beta}=(\log Q)^\beta$: it is polylogarithmic in $Q$, but polynomial in $N$.

The third row begins when $Q$ is exponential in $N$.  Indeed, for $Q=e^{cN}$, the finite-generation term in the buffered-ERM bound is of constant order, so the buffered-ERM theorem does not even guarantee a vanishing excess risk.  By contrast, in the antisymmetric scalar model, the pooled polynomial-moment estimator attains the $N^{-\beta}$ identification floor once the explicit exponential threshold in the table is met.  This rate does not follow from the buffered-ERM analysis, and constructing a matching pooled procedure for general ensembles remains open.

\paragraph{Conjectured completion of the scalar picture.}
The spectral upper and lower bounds suggest that the intrinsic stability cost at degree $d$ is $\kappa_{d,n}$ itself.  This leads to the conjecture
\[
\calR^{\mathrm{dec}}_{Q,n}(\beta,L)
\stackrel{?}{\asymp}_{\beta,L}
\inf_{1\le d\le n}
\left\{
d^{-\beta}
+
\frac{\kappa_{d,n}}{\sqrt Q}
\right\}.
\]
If true, it gives the following three-phase rate picture:
\begin{equation}
\label{eq:intro-three-phase}
\calR^{\mathrm{dec}}_{Q,n}(\beta,L)
\asymp_{\beta,L}
\begin{cases}
Q^{-1/2},
&Q\lesssim n^\beta,\\[0.8ex]
\bigl[n\log(eQ/n^\beta)\bigr]^{-\beta/2},
&n^\beta\lesssim Q\lesssim e^{cn},\\[0.8ex]
n^{-\beta},
&Q\gtrsim e^{cn}.
\end{cases}
\end{equation}
The notation $e^{cn}$ records only the exponential scale.  To be more precise, write
\[
Q=n^\beta e^{u_n},
\qquad
u_n=\log(Q/n^\beta).
\]
The question-limited regime corresponds to $u_n$ remaining bounded above, the genuinely spectral-pooling regime has $u_n\to\infty$ with $u_n=o(n)$, and the identification-limited regime has $u_n\gtrsim n$.  The conjectured middle-region rate is $(nu_n)^{-\beta/2}$; at the boundary $u_n\asymp n$, this becomes $n^{-\beta}$, matching the third-region rate.

The rigorous middle theorem currently begins at $u_n\ge(1+\eta)\log n$ for fixed $\eta>0$.  The conjecture predicts the middle rate for every $u_n\to\infty$, however slowly, and for all $\beta>0$.  Thus the main open questions for the scalar model are to resolve the transition immediately above $Q\asymp n^\beta$, to sharpen the spectral upper and lower bounds to the same scale, which includes removing the extra $\sqrt{n+1}$ factor from the upper bound in \eqref{eq:intro-spectral-upper}, and to extend the matching spectral lower theory beyond $\beta=1$.  For bounded $u_n$, the stabilized factor $1+u_n=\log(eQ/n^\beta)$ connects the middle expression continuously to the root-$Q$ phase.

\subsection{Related work}
\label{sec:related-work}

Our work follows the voting approach of self-consistency \citep{WangSelfConsistency}, which uses agreement among sampled final answers without requiring a verifier at deployment.  Experiments of \citet{BrownMonkeys} illustrate the gap between coverage, the probability that a sample collection contains a correct answer, and the accuracy of the selected answer.  Eventual coverage requires only positive probability on the correct answer; our ideal voting rule requires it to be the unique mode.  For a fixed model and dataset, \citet{WuInferenceScaling} analyze the limiting accuracy of voting and use concentration bounds to study its convergence as the number of generations grows.

The direct starting point for our work is Best-of-$\infty$ \citep{KomiyamaBoInf}.  That paper formulates the infinite-generation voting objective for mixtures of LLMs, expresses optimization over a benchmark as a mixed-integer linear program, and empirically studies learning and transferring ensemble weights.  It also develops a Bayesian stopping rule for approximating the modal answer with finitely many generations.
Whereas their study of learning and transferring ensemble weights is empirical, we develop a finite-sample theory of learning and generalization.  Our theory identifies the different statistical roles of the benchmark size $Q$ and generation depth $N$, and shows how their interplay governs the attainable learning rates.
The max-margin refinement in Best-of-$\infty$ can still exploit favorable fluctuations in empirical margins.  Our statistically calibrated buffer, on the other hand, controls their effect on learning, leading to clean excess-risk guarantees.  Our scalar minimax analysis uncovers phase transitions in the attainable learning rates as $Q$ and $N$ vary.  It reveals regimes in which pooling across questions improves learning, and an identification floor at finite $N$ that persists even with infinitely many questions.

Learned verifiers provide an alternative to voting by scoring candidate solutions \citep{CobbeVerifiers}.  With the generator and scoring model taken as given, \citet{HuangBestN} and \citet{MukherjeeOT} study how selection performance depends on the generator's coverage of high-quality responses and errors in their scores.  \citet{DiBestMajority} also use a reward model: they discard responses with low empirical frequency and return up to $k$ of the remaining responses with the highest predicted rewards.  For binary correctness rewards, their pass@$k$ criterion counts success whenever at least one returned response is correct.  Their minimax bounds concern this selection problem under coverage and reward-error assumptions; they do not address the joint $(Q,N)$ problem of learning ensemble weights across questions.

The two sampling layers also arise in distribution regression, where each covariate is a probability measure observed through a finite sample \citep{SzaboDistributionRegression}.  In ensemble learning, the underlying answer distributions determine a discontinuous loss through their ensemble rankings.  Our general-$K$ bound combines uniform control of their empirical estimates with a growth-function bound for the resulting polytopal decision rules.  Margin assumptions also play a central role in classification theory \citep{AudibertTsybakov,MassartNedelec}.  Here the one-sided condition at $w^\star$ limits the mass of questions on which the correct answer wins by a small amount; it controls the finite-$N$ term, while generalization across questions contributes the separate root-$Q$ term.

The scalar reduction leads to the binomial-mixture experiment studied by \citet{TianPopulations} and \citet{VinayakPopulations}, who establish optimal recovery rates under Wasserstein-$1$ loss.  \citet{BasuBinomial} construct exact pointwise confidence intervals for the mixing distribution's cumulative distribution function and quantiles.  Our target is the sign-imbalance functional $\theta(G)$, whose integrand is discontinuous at zero.  Wasserstein recovery guarantees do not directly provide our minimax rates, which depend on the mixing mass near this decision boundary.  We study estimation of this functional and the associated ensemble decision under a power-law bound on that mass.

The general framework for this scalar problem is indirect estimation of a linear functional over a convex model.  The modulus-via-testing theory of \citet{DonohoLiuII} relates minimax estimation to Hellinger continuity of the target.  \citet{JuditskyNemirovski} construct near-minimax estimators by convex programming for affine observation models, and \citet{PolyanskiyWu} characterize minimax quadratic risk through convex duality.  We give a self-contained finite-sample specialization for expected absolute loss in our categorical experiment, and use reflection symmetry to connect functional estimation to the ensemble decision.  Evaluating the resulting modulus as $Q$ and $N$ vary then reveals how sampling error interacts with the ambiguity left by observing only finitely many moments.

To obtain explicit rates, we relate this moment ambiguity to weighted polynomial approximation of the sign function.  The matching lower approximation bound uses the inverse theorem of \citet{Kopotun} for doubling weights with finitely many zeros and singularities.  For finite $Q$, we use the classical Bernstein--Durrmeyer spectrum \citep{DerriennicDurrmeyer,DurrmeyerSpectral} to quantify how estimating higher polynomial moments amplifies sampling error.  Combining this spectral information with the margin-dependent approximation error gives the pooling bounds and the rigorous intermediate regime described above.

\section{Buffered ERM for general ensembles}
\label{sec:general-model}
\label{sec:buffered}

We now return to the general $K$-model ensemble and formalize the observations used to learn its weights.  The $Q$ sampled questions and the $N$ generations from each model on each question produce two
different
sources of randomness.  We then define buffered ERM and establish its excess-risk guarantee.  Whether its dependence on $(Q,N)$ is optimal leads to the scalar submodel studied in the next section.

\subsection{Model and observations}

We retain the notation $p_i$, $p_w$, $M_q$, and $w^\star$ from the introduction.  Write $[m]=\{1,\dots,m\}$ and $\Delta_K:=\{w\in\R_+^K:\sum_{i=1}^K w_i=1\}$.  We assume throughout that $a^\star(q)\in\calA(q)$ and $2\le |\calA(q)|\le A<\infty$.  For later reference, define the per-question clean loss
\begin{equation}
\label{eq:clean-loss-risk}
\ell_q(w)=\one\{M_q(w)\le0\},
\qquad
R(w)=\E_{q\sim\calQ}\ell_q(w)
=\Pp_{q\sim\calQ}\{M_q(w)\le0\}.
\end{equation}

The observed data consist of the labeled questions and the raw generations
\begin{equation}
\label{eq:training-data}
\mathcal S_{Q,N}
:=
\left\{
\left(q_j,a^\star(q_j),
(Y_{j,i,r})_{(i,r)\in[K]\times[N]}
\right):j\in[Q]
\right\}.
\end{equation}
Here $q_1,\dots,q_Q\overset{\mathrm{iid}}{\sim}\calQ$, and, conditional on the questions, the variables
\[
Y_{j,i,r}\sim p_i(\cdot\mid q_j),
\qquad
(j,i,r)\in[Q]\times[K]\times[N],
\]
are independent.  For each question and model, reduce these raw generations to the answer counts
\[
X_{j,i,a}
:=
\sum_{r=1}^N\one\{Y_{j,i,r}=a\},
\qquad
X_{j,i}:=(X_{j,i,a}:a\in\calA(q_j)).
\]
Then
\begin{equation}
\label{eq:count-model}
X_{j,i}\mid q_j\sim
\operatorname{Multinomial}\bigl(N,p_i(\cdot\mid q_j)\bigr),
\qquad (j,i)\in[Q]\times[K],
\end{equation}
and $\widehat p_i(\cdot\mid q_j)=X_{j,i}/N$.  The counts are sufficient for the answer distributions, so all subsequent procedures may be expressed in terms of $X_{j,i}$ rather than the ordered raw generations.  Define the empirical ensemble distribution and margin by
\[
\widehat p_w(a\mid q_j)
:=
\sum_{i=1}^K w_i\widehat p_i(a\mid q_j),
\qquad
\widehat M_{q_j}(w)
:=
\widehat p_w(a^\star(q_j)\mid q_j)
-\max_{a\ne a^\star(q_j)}\widehat p_w(a\mid q_j).
\]
The empirical answer distributions $\widehat p_i(\cdot\mid q_j)$ are observable and therefore make $\widehat M_{q_j}(w)$ computable for any $w$; the corresponding clean margin $M_{q_j}(w)$ is not observed.  The target $R(w)$ is defined by these clean margins and the population distribution of questions.  Thus the analysis must account both for generalization across the $Q$ questions and for estimation of each question's margin from $N$ generations.

\subsection{Estimator and guarantee}

Raw ERM minimizes the number of training questions with nonpositive empirical margin.  It can exploit weights for which a question appears solved only because of a favorable finite-$N$ fluctuation.  We instead define, for a buffer $\eps>0$,
\begin{equation}
\label{eq:buffered-erm}
\widehat R_\eps(w)
:=
\frac1Q\sum_{j=1}^Q\one\{\widehat M_{q_j}(w)\le\eps\},
\qquad
\widehat w_\eps
\in
\argmin_{w\in\Delta_K}\widehat R_\eps(w).
\end{equation}
Because $\widehat R_\eps$ takes values in the finite set $\{0,Q^{-1},\dots,1\}$, its minimum is attained.

For a confidence level $\delta\in(0,1)$, choose the buffer to be
\begin{equation}
\label{eq:epsilon-N}
\eps_N
:=
\sqrt{\frac{2\log(8KAQ/\delta)}{N}}.
\end{equation}
The logarithm comes from requiring uniform control over all training questions, models, and candidate answers.  The key is the $N^{-1/2}$ scale.

The buffer prevents favorable finite-$N$ fluctuations from creating empirical successes, but it may also count as errors questions that $w^\star$ solves with only a small winning margin.  We control the population mass of such questions through the following one-sided condition.

\begin{assumption}[One-sided margin at $w^\star$]
\label{ass:one-sided-margin}
There are constants $C_m>0$, $\beta>0$, and $t_0>0$ such that
\begin{equation}
\label{eq:one-sided-margin}
\Pp\{0<M_q(w^\star)\le t\}
\le C_m t^\beta,
\qquad 0<t\le t_0.
\end{equation}
\end{assumption}

This condition is imposed only at $w^\star$; no margin condition is required at the data-dependent estimator.  Generalization across the $Q$ sampled questions contributes a separate error scale, which we denote by
\begin{equation}
\label{eq:vq}
\mathfrak v_Q(\delta)
:=
C_0
\sqrt{
\frac{(K-1)\log(eAQ)+\log(8/\delta)}{Q}
},
\end{equation}
where $C_0$ is a universal constant.

\begin{theorem}[Buffered ERM]
\label{thm:buffered-main}
Assume \cref{ass:one-sided-margin}, let $\eps_N$ be given by \eqref{eq:epsilon-N}, and suppose $2\eps_N\le t_0$.  Let $\widehat w_{\eps_N}$ be any minimizer in \eqref{eq:buffered-erm}.  Then, with probability at least $1-\delta$ over both sources of randomness,
\begin{equation}
\label{eq:buffered-bound-exact}
R(\widehat w_{\eps_N})-R(w^\star)
\le
2\mathfrak v_Q(\delta)
+C_m(2\eps_N)^\beta.
\end{equation}
Substituting \eqref{eq:epsilon-N} and \eqref{eq:vq} gives the explicit rate stated in \eqref{eq:intro-main}.
\end{theorem}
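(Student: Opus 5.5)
The argument rests on three high-probability events, each allotted part of the confidence budget $\delta$. They are combined through the buffered sandwich \eqref{eq:intro-sandwich} and a uniform deviation bound over the clean margin classes.

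\textbf{Step 1: uniform margin accuracy.} Condition on the questions. By Hoeffding's inequality and a union bound over $j\in[Q]$, $i\in[K]$ and $a\in\calA(q_j)$ (at most $KAQ$ triples), with probability at least $1-\delta/4$ we have
\[
|\widehat p_i(a\mid q_j)-p_i(a\mid q_j)|\le\eps_N/2
\]
for all such triples. The constant $8$ inside the logarithm in \eqref{eq:epsilon-N} leaves room for the two-sided bound.

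On this event, for every $w\in\Delta_K$ we get $|\widehat p_w(a\mid q_j)-p_w(a\mid q_j)|\le\eps_N/2$, since $\widehat p_w-p_w$ is a convex combination of the per-model errors. Because $M_q(w)$ is a difference of a coordinate and a maximum of coordinates, and the maximum is $1$-Lipschitz in sup-norm, it follows that
\[
|\widehat M_{q_j}(w)-M_{q_j}(w)|\le\eps_N
\]
uniformly in $j$ and $w$. This is exactly the event \eqref{eq:intro-uniform-event}, so the sandwich \eqref{eq:intro-sandwich} holds for every training question and every $w$.

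\textbf{Step 2: uniform convergence over questions.} Consider the two function classes $\{q\mapsto\one\{M_q(w)\le0\}:w\in\Delta_K\}$ and $\{q\mapsto\one\{M_q(w)\le2\eps_N\}:w\in\Delta_K\}$. For a fixed question, $M_q(w)\le s$ holds iff for some $a\ne a^\star$ the linear inequality
\[
\sum_i w_i\bigl(p_i(a^\star\mid q)-p_i(a\mid q)\bigr)\le s
\]
holds. So each $w$ induces a union of at most $A-1$ halfspaces in the $(K-1)$-dimensional simplex.

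On $Q$ sample points the relevant hyperplanes number at most $QA$. The arrangement of $m$ affine hyperplanes in dimension $K-1$ has at most $\sum_{k\le K-1}\binom{m}{k}\le(em)^{K-1}$ cells, including lower-dimensional faces, which handles ties and boundary points. The growth function is therefore at most $(eAQ)^{K-1}$.

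The standard symmetrization or VC bound then gives, with probability at least $1-\delta/4$ per class,
\[
\sup_w|\widehat R^{\mathrm{clean}}_Q(w)-R(w)|\le\mathfrak v_Q(\delta),
\]
and the analogous statement for the shifted class. For the one-sided term at the fixed $w^\star$, plain Hoeffding suffices.

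\textbf{Step 3: the chain.} On the intersection of these events, write $\widehat w=\widehat w_{\eps_N}$. Then
\[
R(\widehat w)\le\widehat R^{\mathrm{clean}}_Q(\widehat w)+\mathfrak v_Q\le\widehat R_{\eps_N}(\widehat w)+\mathfrak v_Q\le\widehat R_{\eps_N}(w^\star)+\mathfrak v_Q.
\]
The first inequality is Step 2, the second is the left half of the sandwich, and the third is optimality of $\widehat w$. Next, by the right half of the sandwich,
\[
\widehat R_{\eps_N}(w^\star)\le\frac1Q\sum_j\one\{M_{q_j}(w^\star)\le2\eps_N\},
\]
whose mean is $R(w^\star)+\Pp\{0<M_q(w^\star)\le2\eps_N\}\le R(w^\star)+C_m(2\eps_N)^\beta$ by \cref{ass:one-sided-margin}, using $2\eps_N\le t_0$. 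Its deviation from the mean is at most $\mathfrak v_Q$ by Hoeffding at the single point $w^\star$. Summing the pieces gives \eqref{eq:buffered-bound-exact}.

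One subtlety is that $\eps_N$ depends on $Q$ and $\delta$ but is deterministic, so the shifted class in Step 2 is legitimate. Step 1's union bound is conditional on the questions and then integrated, so the probability accounting remains valid over both sources of randomness.

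The main obstacle is Step 2's combinatorial count. One must verify that the max over $a\ne a^\star$, together with the non-strict inequality and ties, still yields a labeling determined by the cell of a hyperplane arrangement of size $AQ$ in the $(K-1)$-dimensional affine simplex. It must also be checked that the resulting VC-type bound produces exactly the $(K-1)\log(eAQ)$ term with a universal constant $C_0$. To settle this, I would first prove the cell count via the classical Zaslavsky/Buck bound on faces, and then invoke the growth-function form of the uniform deviation inequality.
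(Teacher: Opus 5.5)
Your proposal is correct and follows the paper's proof: the same Hoeffding-plus-union-bound margin-accuracy event, the buffered sandwich, and the chain (a)--(f), with uniform convergence at threshold $0$ supplied by a hyperplane-arrangement growth bound; replacing the paper's uniform bound at threshold $2\eps_N$ with plain Hoeffding at the fixed $w^\star$ is a valid, slightly leaner variant. One small slip: $\sum_{k\le K-1}\binom mk$ counts only full-dimensional regions, not lower-dimensional faces, so you should either use the paper's face bound $\sum_{r\le K-1}2^r\binom mr$ or shift each hyperplane slightly so that closed-side patterns become open regions; either way the growth is polynomial of degree $K-1$, and only the universal constant $C_0$ changes.
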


\begin{proof}
For $t\ge0$, define the clean population and empirical threshold risks
\[
R_t(w):=\Pp\{M_q(w)\le t\},
\qquad
\widetilde R_t(w):=
\frac1Q\sum_{j=1}^Q\one\{M_{q_j}(w)\le t\}.
\]
In particular, $\widetilde R_0$ is the ordinary empirical risk that would be available if the clean answer distributions were known.  It contains the randomness from sampling $Q$ questions, but not the finite-$N$ generation noise present in $\widehat R_{\eps_N}$.  Write $\mathfrak v_Q=\mathfrak v_Q(\delta)$ for brevity.
The result follows from a chain of six inequalities.  By \cref{lem:clean-risk-deviation}, with probability at least $1-\delta/4$ over the sampled questions,
$|\widetilde R_t(w)-R_t(w)|\le\mathfrak v_Q$ simultaneously for all $w\in\Delta_K$ and $t\in\{0,2\eps_N\}$.  In particular,
\[
R_0(\widehat w_{\eps_N}) \stackrel{(a)}{\le} \widetilde R_0(\widehat w_{\eps_N})+\mathfrak v_Q, \quad \widetilde R_{2\eps_N}(w^\star) \stackrel{(e)}{\le} R_{2\eps_N}(w^\star)+\mathfrak v_Q.
\]

Separately, \cref{lem:margin-accuracy} shows, conditionally on the sampled questions, that event \eqref{eq:intro-uniform-event} holds with probability at least $1-\delta/4$ over the generations.  On this event, the buffered sandwich \eqref{eq:intro-sandwich} holds for every $w\in\Delta_K$ and $j\in[Q]$.  Summing over $j$ and dividing by $Q$ gives
\[
\widetilde R_0(\widehat w_{\eps_N}) \stackrel{(b)}{\le} \widehat R_{\eps_N}(\widehat w_{\eps_N}) \stackrel{(c)}{\le}  \widehat R_{\eps_N}(w^\star) \stackrel{(d)}{\le} \widetilde R_{2\eps_N}(w^\star).
\]
The middle inequality (c) follows from the minimality of $\widehat w_{\eps_N}$ for the buffered empirical risk.  Finally, \cref{ass:one-sided-margin} gives
\[
R_{2\eps_N}(w^\star)-R_0(w^\star)
=
\Pp\{0<M_q(w^\star)\le2\eps_N\}
\stackrel{(f)}{\le} C_m(2\eps_N)^\beta.
\]
By a union bound, the two preceding events hold simultaneously with probability at least $1-\delta$.  Combining (a)--(f) in alphabetic order and using $R=R_0$ proves \eqref{eq:buffered-bound-exact}.
\end{proof}

The two terms in \eqref{eq:intro-main} correspond to generalization across questions and estimation of their clean margins from finite generations.  The core of the proof is the chain (b)--(d), which compares the clean empirical risk of the data-dependent $\widehat w_{\eps_N}$ directly with the $2\eps_N$-thresholded clean empirical risk of $w^\star$.  The outer inequalities follow from the two sides of the buffered sandwich, while the middle inequality is the empirical optimality of $\widehat w_{\eps_N}$.  Consequently, no population margin condition is required at $\widehat w_{\eps_N}$, or uniformly over $w$.  The only margin-band cost occurs at the fixed $w^\star$, where the identity preceding (f) identifies it with the probability of $0<M_q(w^\star)\le2\eps_N$, which \cref{ass:one-sided-margin} bounds.  For the zero-buffer estimator, the analogue of inequality (b) would fail because a favorable fluctuation can make a question with clean margin in $(-\eps_N,0]$ appear solved.  A direct analysis would then have to control this negative-margin band at the data-dependent selected weight, typically through a population margin condition uniform over candidate weights or additional structure on the selection step.

\begin{remark}[Computation]
For fixed $\eps$, the buffered objective is piecewise constant on the arrangement of the empirical hyperplanes
\[
\widehat p_w(a^\star(q_j)\mid q_j)
-\widehat p_w(a\mid q_j)
=\eps,
\qquad a\ne a^\star(q_j).
\]
It can be optimized by the same mixed-integer formulation used for weighted Best-of-$\infty$, with the right-hand side shifted by $\eps$.  If a numerical solver returns $\widetilde w_\eps$ satisfying
\[
\widehat R_\eps(\widetilde w_\eps)
\le
\inf_{w\in\Delta_K}\widehat R_\eps(w)+\eta,
\]
the same proof adds only $\eta$ to \eqref{eq:buffered-bound-exact}.  Thus a computational tolerance no larger than the statistical bound in \eqref{eq:intro-main} does not change the rate.
\end{remark}

\subsection{Deployment with a finite generation budget}
\label{sec:finite-deployment}

The clean risk concerns the population voting rule.  If instead one is interested in performance at deployment under a limited generation budget, an additional source of sampling error comes into play.
With finitely many
generations on a new question, the magnitude of a positive (clean) margin also
affects the reliability of the vote.  We can account for this additional
sampling error by increasing the training buffer.

For a fixed weight $w$ and a fresh question $q\sim\calQ$, draw
$Y_1,\ldots,Y_T$ independently from $p_w(\cdot\mid q)$, independently of
the training experiment.  This is the independent-mixture implementation:
each generation first selects a model according to $w$, and then samples from it. Form the empirical answer frequencies (or counts) based on $Y_{1:T}$ and consider the plurality vote based on them. Define
\begin{equation}
\label{eq:deployment-risk}
R_T^{\mathrm{dep}}(w)
:=
\Pp_{q,Y_{1:T}}\!\left\{
a^\star(q)\text{ is not the unique empirical plurality}
\right\}.
\end{equation}
Thus $T$ is the total number of deployment generations, whereas $N$
counts training generations from each model on each training question.
As in the clean loss, ties count as errors.

\begin{corollary}[Finite deployment of buffered ERM]
\label{cor:finite-deployment}
Assume \cref{ass:one-sided-margin}, and let $\eps_N$ be given by
\eqref{eq:epsilon-N}.  Fix an integer $T\ge1$ and a reserve $r>0$ before
observing the training data, and suppose $2\eps_N+r\le t_0$.
Let $\widehat w_{\eps_N+r}$ be any minimizer in
\eqref{eq:buffered-erm} with buffer $\eps_N+r$.  Then, with probability
at least $1-\delta$ over the training experiment,
\begin{equation}
\label{eq:finite-deployment-bound}
R_T^{\mathrm{dep}}(\widehat w_{\eps_N+r})-R(w^\star)
\le
2\mathfrak v_Q(\delta)
+C_m(2\eps_N+r)^\beta
+(A-1)e^{-Tr^2/2}.
\end{equation}
\end{corollary}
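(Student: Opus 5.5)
The plan is to split the deployment risk of any fixed weight into a clean-margin part and a finite-$T$ sampling part, and then rerun the six-inequality chain from the proof of \cref{thm:buffered-main} with the threshold $r$ in place of $0$ on the learned side and $2\eps_N+r$ in place of $2\eps_N$ on the side of $w^\star$.

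\textbf{Step 1 (deployment decomposition).} For a fixed $w$ and a question $q$ with $M_q(w)>r$, the gold answer fails to be the unique empirical plurality only if, for some $a\ne a^\star(q)$, the count of $a$ is at least the count of $a^\star(q)$. Write $\xi_s=\one\{Y_s=a^\star(q)\}-\one\{Y_s=a\}\in[-1,1]$. These variables are iid with mean $p_w(a^\star\mid q)-p_w(a\mid q)\ge M_q(w)>r$. Hoeffding's inequality for variables of range $2$ then gives
\[
\Pp\Bigl\{\textstyle\sum_s\xi_s\le0\Bigr\}\le e^{-Tr^2/2},
\]
and a union bound over the at most $A-1$ wrong answers yields
\[
R_T^{\mathrm{dep}}(w)\le R_r(w)+(A-1)e^{-Tr^2/2}
\qquad\text{for every fixed } w,
\]
where $R_t(w)=\Pp\{M_q(w)\le t\}$ as in the proof of \cref{thm:buffered-main}. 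The deployment sample is independent of the training data, so we may apply this conditionally with $w=\widehat w_{\eps_N+r}$.

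\textbf{Step 2 (training chain).} We use the uniform deviation bound of \cref{lem:clean-risk-deviation} at the thresholds $t\in\{r,2\eps_N+r\}$. These thresholds are deterministic because $r$ is fixed in advance, and we assume the lemma applies to any two fixed thresholds, just as it was applied to $\{0,2\eps_N\}$. We combine this with the margin accuracy event of \cref{lem:margin-accuracy}. On that event the shifted sandwich
\[
\one\{M_{q_j}(w)\le r\}\le\one\{\widehat M_{q_j}(w)\le\eps_N+r\}\le\one\{M_{q_j}(w)\le2\eps_N+r\}
\]
holds, by the same argument as \eqref{eq:intro-sandwich}. The chain then reads
\[
R_r(\widehat w)\le\widetilde R_r(\widehat w)+\mathfrak v_Q\le\widehat R_{\eps_N+r}(\widehat w)+\mathfrak v_Q\le\widehat R_{\eps_N+r}(w^\star)+\mathfrak v_Q\le\widetilde R_{2\eps_N+r}(w^\star)+\mathfrak v_Q\le R_{2\eps_N+r}(w^\star)+2\mathfrak v_Q .
\]
By \cref{ass:one-sided-margin}, which applies because $2\eps_N+r\le t_0$,
\[
R_{2\eps_N+r}(w^\star)-R(w^\star)\le C_m(2\eps_N+r)^\beta .
\]
A union bound over the two events gives probability at least $1-\delta$, and inserting Step 1 proves \eqref{eq:finite-deployment-bound}.

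\textbf{Main obstacle.} The step needing the most care is Step 1's handling of ties and its conditioning. Because ties count as errors, the event must be written as ``some count of a wrong answer is at least the gold count.'' The Hoeffding exponent must use range $2$ with mean gap exceeding $r$, which gives exactly $e^{-Tr^2/2}$. We also have to confirm that \cref{lem:clean-risk-deviation} is valid at thresholds other than $0$ and $2\eps_N$. Its hyperplane-arrangement proof should not depend on which fixed threshold is used, and if needed one can simply re-invoke it with the pair $\{r,2\eps_N+r\}$.
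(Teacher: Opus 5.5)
Your proposal is correct and follows essentially the same route as the paper. The paper likewise proves the per-weight bound $R_T^{\mathrm{dep}}(w)\le R_r(w)+(A-1)e^{-Tr^2/2}$ by Hoeffding plus a union bound over wrong answers, then reruns the buffered-ERM chain at the shifted thresholds $r$ and $2\eps_N+r$ and applies the margin condition at $w^\star$ (and \cref{lem:clean-risk-deviation} is already stated for any two deterministic thresholds $s_1,s_2\ge0$, so no re-proof is needed).
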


The additional reserve controls questions whose positive margins are too
small for the deployment vote to be reliable.  The margin assumption is
still needed only at $w^\star$; the proof is given in
\cref{app:finite-deployment-proof}.
Taking $r_T=\sqrt{2/T}\log(eAT)$ makes the deployment tail decay faster
than any fixed power of $T^{-1}$, without requiring knowledge of $\beta$.
With $K$, $A$, the margin constants, and the confidence level fixed, the
corollary gives
\begin{equation}
\label{eq:finite-deployment-rate}
R_T^{\mathrm{dep}}(\widehat w_{\eps_N+r_T})
\le
R(w^\star)
+\widetilde O\!\left(Q^{-1/2}+N^{-\beta/2}+T^{-\beta/2}\right),
\end{equation}
whenever $2\eps_N+r_T\le t_0$.  The comparison is with optimal clean
risk.  Learning weights that optimize finite-deployment accuracy or cost
is a further problem.

\section{A scalar antisymmetric two-model core}
\label{sec:scalar-model}

Buffered ERM thresholds the empirical margin of each training question before averaging across questions.  To ask whether weaker finite-generation evidence can instead be pooled across questions, we now study a submodel in which the global statistical target becomes one-dimensional and can be analyzed sharply.

\subsection{Model and exact decision reduction}

There are two models and two answers $\{\mathrm{gold}, \mathrm{other}\}$.  Each question $q$ has a latent signed advantage $Z_q\in[-1,1]$ and
\begin{equation}
\label{eq:antisymmetric}
p_1(\mathrm{gold}\mid q)=\frac{1+Z_q}{2},
\qquad
p_2(\mathrm{gold}\mid q)=\frac{1-Z_q}{2}.
\end{equation}
Let $G$ denote the distribution of $Z_q$ when $q\sim\calQ$.  It is a probability measure supported on $[-1,1]$.
An ensemble weight can be parameterized by a single parameter as $w=(t,1-t)$ with $0\le t\le1$. In the two-answer case,
the (clean) margin of the ensemble  is the same weighted average of the models' individual margins.
Since the margin of model 1 is $Z_q$ and that of model 2 is $-Z_q$, the ensemble margin is
\begin{equation}
\label{eq:scalar-margin}
M_q(w) = t(Z_q) + (1-t)(-Z_q)
= (2t-1)Z_q.
\end{equation}

Recalling the definition of the clean risk in \eqref{eq:clean-loss-risk}, we have
\[
R(w)=\int\one\{(2t-1)z\le0\}\,\dd G(z).
\]
Here and throughout, $\sgn(z)=\one\{z>0\}-\one\{z<0\}$, so that $\sgn(0)=0$.  The following lemma formalizes that there are essentially two decision rules to consider.

\begin{lemma}[Only the side of one half matters]
\label{lem:side}
Define
\begin{equation}
\label{eq:theta}
\theta(G)
:=
\int_{-1}^1\sgn(z)\,\dd G(z)
=
G((0,1])-G([-1,0)).
\end{equation}
Every $t>1/2$ has risk $G([-1,0])$, every $t<1/2$ has risk $G([0,1])$, and $t=1/2$ has risk one.  Hence the optimal side is the sign of $\theta(G)$ when $\theta(G)\ne0$; when $\theta(G)=0$, the two sides have equal risk.  Choosing the wrong side incurs excess risk $|\theta(G)|$.
\end{lemma}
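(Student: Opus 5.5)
The argument is a direct computation from the margin formula \eqref{eq:scalar-margin} and the risk formula
\[
R(w)=\int\one\{(2t-1)z\le0\}\,\dd G(z).
\]
It splits into three cases according to the sign of $2t-1$, followed by a comparison of the two nontrivial risks.

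\textbf{Case $t>1/2$.} Here $2t-1>0$, so $(2t-1)z\le0$ holds iff $z\le0$. The integrand is $\one\{z\le 0\}$, and the risk is $G([-1,0])$, independent of the particular $t>1/2$.

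\textbf{Case $t<1/2$.} Here $2t-1<0$, so $(2t-1)z\le0$ holds iff $z\ge0$, giving risk $G([0,1])$.

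\textbf{Case $t=1/2$.} The margin vanishes identically. Since ties count as errors, the integrand is identically one, and the risk is $1$ because $G$ is a probability measure on $[-1,1]$.

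To compare the two sides, write
\[
R_+:=G([-1,0]), \qquad R_-:=G([0,1]),
\]
and subtract, with the atom $G(\{0\})$ cancelling:
\[
R_--R_+=G((0,1])-G([-1,0))=\int\sgn(z)\,\dd G(z)=\theta(G).
\]
The middle equality uses $\sgn(0)=0$. Consequently, if $\theta(G)>0$ the side $t>1/2$ is strictly better, and if $\theta(G)<0$ the side $t<1/2$ is. If $\theta(G)=0$ the two sides tie.

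Two points remain. First, $t=1/2$ is never optimal unless it ties. Indeed, $R_++R_-=1+G(\{0\})\le 2$, so $\min(R_+,R_-)\le 1$, and thus the minimum clean risk is $\min(R_+,R_-)$. Second, choosing the wrong side costs $\max(R_+,R_-)-\min(R_+,R_-)=|\theta(G)|$, which is the claimed excess risk.

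Nothing here is a genuine obstacle. The only care needed is bookkeeping of the atom at $z=0$: it appears in both $R_+$ and $R_-$ because ties count as errors, so it cancels in the difference. This is exactly why $\theta$ uses the convention $\sgn(0)=0$.
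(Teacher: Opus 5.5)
Your proof is correct and follows essentially the same route as the paper: a case split on the sign of $2t-1$, then the difference $R_- - R_+ = \theta(G)$, with the atom at zero cancelling. Your check that $t=1/2$ is weakly dominated is a bit more explicit than the paper's, and could be shortened, since each of $R_\pm$ is a probability and hence at most one.
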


\begin{proof}
The stated risks for the three decisions $t>1/2$ (the $+$ decision), $t<1/2$ (the $-$ decision), and $t=1/2$ follow from~\eqref{eq:scalar-margin}.  The decision $t=1/2$ is weakly dominated.  Moreover,
\[
R(-)-R(+)
=G([0,1])-G([-1,0])
=G((0,1])-G([-1,0))
=\theta(G),
\]
where the mass at zero cancels.  Thus, when $\theta(G)>0$, the $+$ decision is optimal and choosing $-$ incurs excess risk $\theta(G)=|\theta(G)|$.  The case $\theta(G)<0$ is similar, while the two decisions have equal risk when $\theta(G)=0$.

\end{proof}

Thus, after excluding the weakly dominated choice $t=1/2$, the ensemble problem is to choose a side $\widehat\sigma\in\{-1,+1\}$.  The $+$ side is optimal when $\theta(G)>0$, the $-$ side is optimal when $\theta(G)<0$, and either side is optimal when $\theta(G)=0$.

The loss producing the regret (i.e. excess-risk) of a decision rule $\widehat\sigma$ is
\begin{equation}
\label{eq:scalar-decision-loss}
\mathcal L_G(\widehat\sigma)
:=
|\theta(G)|\,
\one\{\widehat\sigma\,\theta(G)<0\}.
\end{equation}
In particular, the loss is zero when $\theta(G)=0$, because the two sides then have equal risk.

Although the original ensemble problem only requires choosing a side, it will be useful to consider the companion problem of estimating $\theta(G)$ under absolute-error loss.  Given any real-valued estimator $\widehat\theta$, let its induced decision be $\widehat\sigma(\widehat\theta)=+1$ when $\widehat\theta\ge0$ and $-1$ otherwise.
Then, for every $G$,
\begin{equation}
\label{eq:pointwise-decision-reduction}
\mathcal L_G\bigl(\widehat\sigma(\widehat\theta)\bigr)
\le
|\widehat\theta-\theta(G)|.
\end{equation}
Indeed, the left-hand side is zero when $\theta(G)=0$; otherwise, whenever the induced decision chooses the wrong side, $\widehat\theta$ lies on the opposite side of zero from $\theta(G)$.  Thus any absolute-error bound for $\widehat\theta$ is also an upper bound on the regret of the induced decision rule.

\subsection{What the generations reveal}

We now formalize the reduced binomial-mixture experiment previewed in \eqref{eq:intro-binomial-channel}.  Let $X_{1q}$ and $X_{2q}$ be the numbers of gold answers among the $N$ generations of models 1 and 2 on question $q$.  Conditional on $Z_q=z$,
\[
X_{1q}\sim\Bin\left(N,\frac{1+z}{2}\right),
\qquad
X_{2q}\sim\Bin\left(N,\frac{1-z}{2}\right),
\quad\text{independently}.
\]
Define
\begin{equation}
\label{eq:S-stat}
S_q:=X_{1q}+N-X_{2q}.
\end{equation}
The joint likelihood of $(X_{1q},X_{2q})$ depends on $z$ only through $S_q$, so $S_q$ is sufficient for $z$ in this antisymmetric experiment.  For the sampled questions, write $Z_j:=Z_{q_j}$ and $S_j:=S_{q_j}$, and set $n:=2N$.  The complete hierarchical model is
\begin{equation}
\label{eq:scalar-reduced-experiment}
Z_1,\dots,Z_Q\overset{\mathrm{iid}}{\sim}G,
\qquad
S_j\mid Z_j=z
\sim
\Bin\left(n,\frac{1+z}{2}\right),
\quad j=1,\dots,Q,
\end{equation}
By sufficiency, the raw pair of model counts can be reduced to $S_j$ conditional on the observed question $q_j$, so the full antisymmetric data reduce to $((q_j,S_j))_{j=1}^Q$.  Passing from these pairs to $S_1,\dots,S_Q$ alone is an additional observation coarsening.  We analyze the resulting count-only experiment first and prove in \cref{sec:observed-transfer} that its minimax lower bounds transfer, up to universal constants, to a fully nonparametric observed-question model.

For $k=0,\dots,n$, let $b_{k,n}(p):=\binom nkp^k(1-p)^{n-k}$ denote the $k$th Bernstein basis polynomial of degree $n$.  Let $K_nG$ denote the marginal law of $S_j$ after mixing over $Z_j\sim G$.  As usual on a finite sample space, we identify this distribution with its pmf, so that
\begin{equation}
\label{eq:KnG}
(K_nG)(k)
=
\int_{-1}^1
b_{k,n}\left(\frac{1+z}{2}\right)
\dd G(z),
\qquad k=0,\dots,n.
\end{equation}
This is the nonparametric $n$-trial binomial-mixture distribution with mixing law $G$.  Equivalently, the marginal observation model is
\begin{equation}
\label{eq:scalar-marginal-experiment}
S_1,\dots,S_Q\overset{\mathrm{iid}}{\sim}K_nG.
\end{equation}
From these observations, the goal is either to estimate $\theta(G)$ in absolute error or decide its sign.

More precisely, $K_n$ is the Markov kernel from $[-1,1]$ to $\{0,\dots,n\}$ that assigns to each $z$ the distribution $\Bin(n,(1+z)/2)$; we refer to it as the $n$-trial binomial channel.  We use the same notation for the induced map on probability measures, so that $K_nG$ is the output distribution when the input $Z$ has distribution $G$, as displayed in~\eqref{eq:KnG}.  Thus~\eqref{eq:scalar-marginal-experiment} can be viewed as observing the infinite-dimensional mixing distribution $G$ through the binomial channel.

To make the channel and its inferential targets concrete, consider the smooth family $G_\eta$ with density
\begin{equation}
\label{eq:scalar-example-density}
g_\eta(z)
:=
\frac{105}{16}z^2(1-z^2)^2(1+\eta z),
\qquad z\in[-1,1],\quad |\eta|\le1.
\end{equation}
The odd tilt leaves the normalizing constant unchanged, and a direct calculation gives
$G_\eta([-t,t])\le(35/8)t^3$ and $\theta(G_\eta)=(35/64)\eta$.
\Cref{fig:scalar-channel} shows how the channel transforms $G_\eta$ into the less informative observation law $K_nG_\eta$, while the two inferential goals depend only on the imbalance between its colored lobes.

\begin{figure}[t]
\centering
\includegraphics[width=\linewidth]{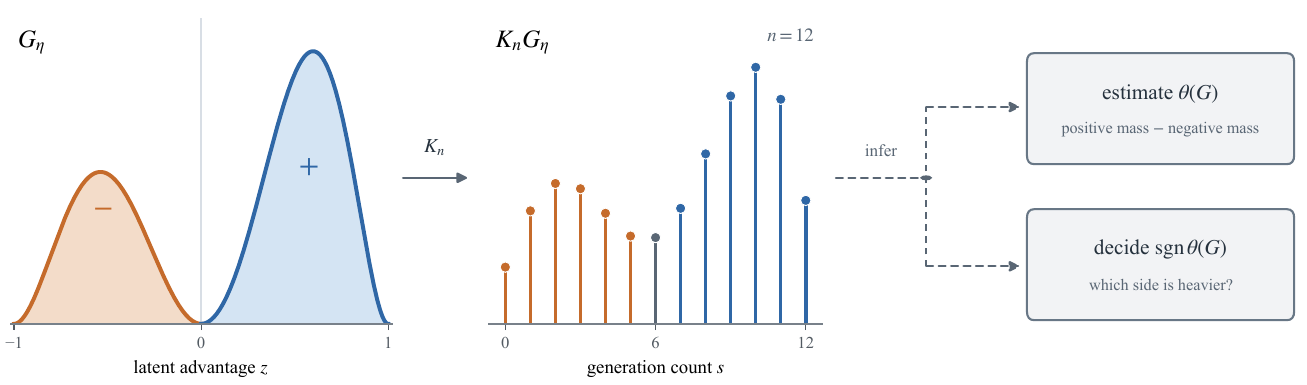}
\caption{The scalar binomial channel and its two inferential targets, shown for the family in \eqref{eq:scalar-example-density} at $\eta=1/2$ and with $n=12$.  Orange and blue distinguish the negative and positive latent mass of $G_\eta$, whose signed difference is $\theta(G_\eta)$.  The channel maps the continuous mixing law to the discrete count law $K_nG_\eta$.  The two statistical tasks are to estimate this imbalance or decide which side is larger; neither requires reconstructing $G_\eta$ itself.}
\label{fig:scalar-channel}
\end{figure}

The following lemma describes an inherent limitation of this channel: it reveals only the first $n$ moments of $G$.

\begin{lemma}\label{lem:moment-equivalence}
For probability measures $G,H$ on $[-1,1]$, the following are equivalent:
\begin{enumerate}[label=(\alph*)]
\item $K_nG=K_nH$;
\item $\int P\,\dd G=\int P\,\dd H$ for every algebraic polynomial $P$ of degree at most $n$;
\item $\int z^j\,\dd G(z)=\int z^j\,\dd H(z)$ for $j=0,\dots,n$.
\end{enumerate}
\end{lemma}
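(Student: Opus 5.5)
The plan is to prove (a)$\Leftrightarrow$(b) using the fact that the Bernstein basis polynomials $b_{k,n}$, $k=0,\dots,n$, form a basis of the space $\mathcal P_n$ of polynomials of degree at most $n$. The equivalence (b)$\Leftrightarrow$(c) is immediate from linearity.

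Set $p=(1+z)/2$; this is an affine bijection between $[-1,1]$ and $[0,1]$. A polynomial in $z$ of degree at most $n$ is then exactly a polynomial in $p$ of degree at most $n$, and conversely. By \eqref{eq:KnG}, condition (a) says that
\[
\int b_{k,n}\bigl(\tfrac{1+z}{2}\bigr)\,\dd G(z)=\int b_{k,n}\bigl(\tfrac{1+z}{2}\bigr)\,\dd H(z)
\qquad\text{for every } k\in\{0,\dots,n\}.
\]
Each $z\mapsto b_{k,n}((1+z)/2)$ is a polynomial of degree $n$ in $z$, so (b) implies (a) directly.

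For (a)$\Rightarrow$(b), I will show that $\{b_{k,n}\}_{k=0}^n$ spans $\mathcal P_n$ in the variable $p$. Then any $P$ of degree at most $n$, after the substitution $z=2p-1$, can be written as a linear combination $\sum_k c_k b_{k,n}(p)$, and integrating against $G$ and $H$ gives equal values by linearity. Since there are $n+1$ basis functions and $\dim\mathcal P_n=n+1$, it suffices to prove linear independence.

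Independence follows by a triangularity argument. Factor $b_{k,n}(p)=\binom nk p^k(1-p)^{n-k}$ and divide by $(1-p)^n$ to get polynomials $\binom nk x^k$ in $x=p/(1-p)$ on $(0,1)$. These are independent monomials, so $\sum_k c_k b_{k,n}\equiv0$ forces every $c_k=0$.

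An explicit alternative avoids the dimension count. Expanding $1=(p+(1-p))^{n-j}$ gives the identity
\[
p^j=\sum_{k\ge j}\frac{\binom kj}{\binom nj}\,b_{k,n}(p),
\]
which writes each monomial $p^j$ as a Bernstein combination.

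Finally, (b)$\Leftrightarrow$(c): (c) is the special case $P(z)=z^j$, and (b) follows from (c) by linearity. Equality of the total masses ($j=0$) holds automatically for probability measures.

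There is no real obstacle. The only point needing care is the basis property, and either the independence argument or the explicit expansion of $p^j$ handles it.
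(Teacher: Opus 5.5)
Your proposal is correct and follows essentially the same route as the paper: (a) is equality of the integrals of $b_{k,n}((1+z)/2)$, these functions form a basis of the polynomials of degree at most $n$ in $z$ via the affine map $p=(1+z)/2$, and the monomials $z^j$ form another basis. The only difference is that you verify the Bernstein basis property explicitly, either by linear independence in $x=p/(1-p)$ or by the expansion of $p^j$ (the same identity the paper uses in the proof of \cref{lem:B0}), whereas the paper's proof takes that basis property as standard.
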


\begin{proof}
Equality of $K_nG$ and $K_nH$ is equivalent to equality of the integrals of $b_{k,n}((1+z)/2)$ for $k=0,\dots,n$.  The Bernstein polynomials form a basis for the polynomials of degree at most $n$ in $p$, and composition with the invertible affine map $p=(1+z)/2$ gives a basis for the polynomials of degree at most $n$ in $z$. Monomials $z^j, j=0,\dots,n$ form another basis.
\end{proof}

In the count-only experiment, letting $Q\to\infty$ identifies only $K_nG$, equivalently the first $n$ moments of $G$.  Consequently, the count-only observations need not identify $G$, or even $\theta(G)$.  This is the source of the finite-generation identification floor quantified below.  The observed-question experiment requires a separate minimax transfer argument, developed in \cref{sec:observed-transfer}.

\section{Scalar minimax theory}
\label{sec:scalar-minimax}

The preceding section reduced the antisymmetric two-model problem to observing independent draws $S_1,\dots,S_Q$
from the binomial-mixture
$K_nG$, with the goal of determining the sign of $\theta(G)$.  We now study the minimax difficulty of this decision problem under a two-sided margin condition on $G$.

We begin by showing that the Hellinger modulus of the binomial channel characterizes the minimax decision risk up to universal constants.  We then evaluate its zero-radius limit to obtain the finite-generation identification floor, derive finite-$Q$ obstructions, and show that these scalar lower bounds transfer when the questions themselves are observed and to the general ensemble problem.  The next sections develop explicit estimators and use the Durrmeyer spectrum to prove matching rates not only at the two endpoints but also in a broad intermediate regime for $0<\beta\le1$; the precise rate immediately above the first transition remains open.

\subsection{Margin class and minimax risks}

Fix $\beta>0$ and $L\ge1$.  Define the two-sided margin class
\begin{equation}
\label{eq:Gclass}
\calG_{\beta,L}
:=
\left\{
G\in\mathcal P([-1,1]):
G([-t,t])\le Lt^\beta
\text{ for every }0<t\le1
\right\}.
\end{equation}
The condition implies $G(\{0\})=0$.  It is stronger than the one-sided condition at $w^\star$ used in \cref{thm:buffered-main}; the symmetry is convenient for the minimax identification problem.
The illustrative family in \cref{fig:scalar-channel} belongs to $\calG_{3,35/8}$ for every $|\eta|\le1$.

For decision rules $\widehat\sigma\in\{-1,+1\}$ based on $S_1,\dots,S_Q$, define the minimax decision risk
\begin{equation}
\label{eq:minimax-decision}
\calR^{\mathrm{dec}}_{Q,n}(\beta,L)
:=
\inf_{\widehat\sigma}
\sup_{G\in\calG_{\beta,L}}
\E_G\mathcal L_G(\widehat\sigma).
\end{equation}
For estimators $\widehat\theta$ based on the same observations, define the minimax absolute-error risk
\[
\calR^{\mathrm{abs}}_{Q,n}(\beta,L)
:=
\inf_{\widehat\theta}
\sup_{G\in\calG_{\beta,L}}
\E_G|\widehat\theta-\theta(G)|.
\]
The pointwise reduction \eqref{eq:pointwise-decision-reduction} shows that
\begin{equation}
\label{eq:dec-le-abs}
\calR^{\mathrm{dec}}_{Q,n}(\beta,L)
\le
\calR^{\mathrm{abs}}_{Q,n}(\beta,L).
\end{equation}

\subsection{Exact characterization by the Hellinger modulus}

For two probability vectors $p,q$ on $\{0,\dots,n\}$, let
$h^2(p,q)=\sum_{k=0}^n(\sqrt{p_k}-\sqrt{q_k})^2$
denote their squared Hellinger distance.
Define the indirect Hellinger modulus
\begin{equation}
\label{eq:modulus}
\omega_{n,\beta,L}(\eps)
:=
\sup\bigl\{
|\theta(G)-\theta(H)|:\;
G,H\in\calG_{\beta,L},
\ h(K_nG,K_nH)\le\eps
\bigr\}.
\end{equation}
This is the largest ambiguity in the target among latent distributions whose observable count laws are $\eps$-indistinguishable.

The following theorem characterizes both scalar minimax risks, up to
universal constants, through this modulus.

\begin{theorem}[Scalar minimax characterization]
\label{thm:scalar-modulus}
There are universal constants $0<c<C<\infty$ such that, for every $Q,n\ge1$,
\begin{equation}
\label{eq:scalar-modulus-chain}
c\,\omega_{n,\beta,L}(Q^{-1/2})
\le
\calR^{\mathrm{dec}}_{Q,n}(\beta,L)
\le
\calR^{\mathrm{abs}}_{Q,n}(\beta,L)
\le
C\,\omega_{n,\beta,L}(Q^{-1/2}).
\end{equation}
\end{theorem}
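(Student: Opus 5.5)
The middle inequality in \eqref{eq:scalar-modulus-chain} is \eqref{eq:dec-le-abs}, so two bounds remain: a lower bound for the decision risk and an upper bound for the absolute-error risk.

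\textbf{Lower bound.} We use a two-point argument that exploits reflection symmetry. Fix $G,H\in\calG_{\beta,L}$ with $h(K_nG,K_nH)\le Q^{-1/2}$ and $\theta(G)-\theta(H)=\Delta$ close to $\omega:=\omega_{n,\beta,L}(Q^{-1/2})$. The decision loss only penalizes wrong signs, so we first center the pair. Let $\bar G$ be the reflection $z\mapsto -z$ of $G$. Then $\theta(\bar G)=-\theta(G)$, $\calG_{\beta,L}$ is reflection invariant, and $K_n\bar G$ is the law of $n-S$. Hellinger distance is invariant under this relabeling, so $h(K_n\bar G,K_n\bar H)\le Q^{-1/2}$ as well.

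Consider the mixtures $G'=\tfrac12(G+\bar H)$ and $H'=\tfrac12(H+\bar G)$. Both lie in $\calG_{\beta,L}$ by convexity of the class, and $\theta(G')=-\theta(H')=\Delta/2$. By joint convexity of squared Hellinger distance,
\[
h^2(K_nG',K_nH')\le\tfrac12 h^2(K_nG,K_nH)+\tfrac12 h^2(K_n\bar H,K_n\bar G)\le Q^{-1}.
\]
The $Q$-fold products then satisfy $h^2\le 2-2(1-\tfrac1{2Q})^Q\le 2(1-e^{-1/2})$, which is bounded away from $2$. Hence $\TV$ between the product laws is at most some universal $\tau<1$. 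Le Cam's two-point bound then gives, for any $\widehat\sigma$,
\[
\max_{G',H'}\E\mathcal L\ge \tfrac{\Delta}{2}\cdot\tfrac{1-\tau}{2},
\]
so $\calR^{\mathrm{dec}}\ge c\,\omega$.

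\textbf{Upper bound.} Here we must construct an estimator of the linear functional $\theta$ with absolute error at most $C\omega(Q^{-1/2})$. The sample space $\{0,\dots,n\}$ is finite and $\{K_nG:G\in\calG_{\beta,L}\}$ is a convex set on which $\theta$ is affine in $G$. We therefore specialize the Donoho--Liu modulus-via-testing argument. Take an $\omega$-grid of values of $\theta$. For each pair of convex hulls $\{G:\theta(G)\le a\}$ and $\{G:\theta(G)\ge b\}$ with $b-a>\omega(\eps)$, every pair of mixtures is separated by $h>\eps$. Le Cam--Birgé testing between convex sets in Hellinger distance then yields tests with error at most $\exp(-cQ\eps^2)$. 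We combine these tests by a binary search, or a Lepski-type tournament over dyadic levels, to obtain $\widehat\theta$.

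Since $\omega$ is defined through a convex class, it is concave and nondecreasing after taking the concave majorant, which is within a factor $2$. This gives $\omega(k\eps)\le k\,\omega(\eps)$. Summing the tail probabilities gives
\[
\E|\widehat\theta-\theta|\lesssim\sum_k \omega(2^kQ^{-1/2})e^{-c4^k}\lesssim \omega(Q^{-1/2}).
\]
The range $|\theta|\le1$ caps the sum.

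The main obstacle is making the upper bound rigorous with constants uniform in $n,\beta,L$. This requires applying the convex-hull testing lemma to infinite-dimensional families of mixing distributions, which is fine because the image sets are compact convex subsets of the simplex. It also requires a careful treatment of the superadditivity of $\omega$ near $\eps$ of order $1$, where $\omega$ saturates at $2$.
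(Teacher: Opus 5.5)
Your lower bound is the paper's argument: reflection symmetrization $G'=\tfrac12(G+\bar H)$, $H'=\tfrac12(H+\bar G)$, joint convexity of $h^2$, tensorization, and Le Cam. There is one numerical slip. The quantity $(1-\tfrac1{2Q})^Q$ increases from $1/2$ to $e^{-1/2}$, so the product bound is $h^2\le 1$ (by Bernoulli's inequality), not $2(1-e^{-1/2})$. With $h^2\le 1$ the crude inequality $\TV\le h$ gives nothing, so you need the sharper $\TV\le h\sqrt{1-h^2/4}\le\sqrt3/2$. Alternatively, do as the paper does: work at radius $(2\sqrt Q)^{-1}$, where $\TV\le h\le 1/2$, and return to $\omega_{n,\beta,L}(Q^{-1/2})$ through \cref{lem:modulus-scaling}.

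The upper bound follows the same Donoho--Liu modulus-via-testing route as the paper, but as written it is not yet a proof, and two steps need repair.

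First, $\omega_{n,\beta,L}$ is not concave merely because $\calG_{\beta,L}$ is convex. The constraint $h(K_nG,K_nH)\le\eps$ is not jointly convex in $(G,H,\eps)$, because $h$ itself is not jointly convex. For example, mixing the pairs of point masses $(\delta_c,\delta_c)$ and $(\delta_a,\delta_b)$ with weight $\lambda$ on the second gives $h=\sqrt{2\lambda}>\lambda\sqrt2$. What convexity does give, via joint convexity of $h^2$ and shrinking a pair toward its midpoint, is $\omega(\eps_2)\le(\eps_2/\eps_1)^2\omega(\eps_1)$ (\cref{lem:modulus-scaling}). This suffices for your sum, since $\sum_k 4^k e^{-c4^k}<\infty$.

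Second, and more substantively, your display presumes that an error of order $\omega(2^kQ^{-1/2})$ requires the failure of only $O(1)$ tests, each with error $e^{-c4^k}$. In a binary search that reuses all $Q$ observations, the interval tested at a given stage depends on earlier outcomes. You must therefore union-bound over every interval the search could reach. For the overlapping-thirds tree, the number of such intervals containing $\theta$ can grow geometrically with the stage, which costs logarithmic factors.

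The missing ingredient is a way to make each stage's test valid conditionally on the past. One option is to restrict the search to a fixed grid of half-overlapping intervals at each scale, so only $O(1)$ tests per scale matter. The other is the paper's sample splitting. Set $\eps_0=32Q^{-1/2}$ and $\omega_\star=\omega_{n,\beta,L}(\eps_0)$, and let $\ell_s$ be the current interval length. Stage $s$ gets a fresh block of $k_s=\lceil 4r_s^{-2}\log(\ell_s/\omega_\star)\rceil$ observations, with $r_s^2=\eps_0^2\ell_s/(6\omega_\star)$. Then:
\begin{enumerate}
\item each stage errs with conditional probability at most $(\omega_\star/\ell_s)^2$;
\item the risk is at most $3\omega_\star+\sum_s\omega_\star^2/\ell_s\le\tfrac72\omega_\star$;
\item because $r_s^2\propto\ell_s$ grows geometrically along the search, the block sizes sum to less than $Q$ (\cref{thm:modulus-upper}).
\end{enumerate}
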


The upper bound follows by applying the abstract result in
\cref{thm:modulus-upper} to the
image of $\calG_{\beta,L}$ under the affine map
$G\mapsto(K_nG,\theta(G))$, which is a compact convex subset of
$\Delta_{n+1}\times[-1,1]$.  The proof of that
abstract result follows the modulus-via-testing approach of
\citet{DonohoLiuII}: robust tests separate target level sets, and a
multiscale interval search turns a sequence of such tests into an
estimator.  The result does not require the observation map
$G\mapsto K_nG$ to be injective, so any residual nonidentifiability is
retained through $\omega_{n,\beta,L}(0)$.  The lower decision bound
requires an additional step: an arbitrary pair in the modulus may have
target values with the same sign, and hence the same optimal decision.
Reflection symmetrization produces an equally indistinguishable pair with
opposite optimal decisions, after which a two-point testing argument
applies.  Both arguments initially evaluate the modulus at fixed numerical
multiples of $Q^{-1/2}$; the scaling property in
\cref{lem:modulus-scaling} absorbs these multiples into the constants in
\eqref{eq:scalar-modulus-chain}.  Full details are given in
\cref{app:scalar-modulus}.

Theorem~\ref{thm:scalar-modulus} characterizes the minimax decision
regret $\calR^{\mathrm{dec}}_{Q,n}(\beta,L)$ in the scalar model,
uniformly over $Q,n\ge1$, up to universal constant factors.  Determining
the rate as the two sample sizes vary therefore reduces to evaluating the
modulus at resolution $Q^{-1/2}$.  Because the modulus is itself a
constrained supremum over pairs of mixing distributions, its scaling with
$Q$ and $n$ is not immediately clear.  We next derive explicit bounds and identify
regimes in which they match.

\subsection{The \texorpdfstring{$Q=\infty$}{Q=infinity} identification floor}

We first consider the implication of \cref{thm:scalar-modulus} in the limit $Q\to\infty$.  As the statistical resolution $Q^{-1/2}$ vanishes, the relevant quantity becomes the zero-radius modulus $\omega_{n,\beta,L}(0)$.  By \cref{lem:moment-equivalence}, equality of the observable laws $K_nG=K_nH$ is equivalent to equality of the moments through degree $n$.  Thus $\omega_{n,\beta,L}(0)$ measures the remaining ambiguity in $\theta(G)$ when those moments are known exactly.  The next theorem evaluates this identification width sharply.

\begin{theorem}[Sharp identification floor]
\label{thm:identification-floor}
For every fixed $\beta>0$ and $L\ge1$, there are constants $0<c_{\beta,L}<C_{\beta,L}<\infty$ such that
\begin{equation}
\label{eq:identification-floor}
 c_{\beta,L}n^{-\beta}
\le
\omega_{n,\beta,L}(0)
\le
C_{\beta,L}n^{-\beta}
\qquad\text{for every }n\ge1.
\end{equation}
Consequently,
\begin{equation}
\label{eq:N-only-lower}
\calR^{\mathrm{dec}}_{Q,n}(\beta,L)
\ge c_{\beta,L}n^{-\beta}
\qquad\text{for every }Q.
\end{equation}
Since $n=2N$, the finite-generation identification floor is $\Theta(N^{-\beta})$.
\end{theorem}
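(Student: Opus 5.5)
By \cref{lem:moment-equivalence}, $\omega_{n,\beta,L}(0)$ is the largest value of $|\int\sgn\,\dd(G-H)|$ over $G,H\in\calG_{\beta,L}$ whose moments agree through degree $n$. We will treat this as a weighted polynomial-approximation problem for $\sgn$, and prove the upper and lower bounds separately. The decision bound \eqref{eq:N-only-lower} then follows at once: the modulus is nondecreasing in $\eps$, so $\omega_{n,\beta,L}(Q^{-1/2})\ge\omega_{n,\beta,L}(0)$, and the left inequality of \cref{thm:scalar-modulus} applies.

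\emph{Upper bound.} Let $G,H$ have equal moments through degree $n$. For any polynomial $P$ of degree at most $n$,
\[
|\theta(G)-\theta(H)|=\Bigl|\int(\sgn-P)\,\dd(G-H)\Bigr|\le\int|\sgn-P|\,\dd G+\int|\sgn-P|\,\dd H .
\]
We choose $P$ to be a polynomial approximant of $\sgn$ on $[-1,1]$ that is uniformly bounded, for instance by $2$, and satisfies the localized error bound
\[
|\sgn(z)-P(z)|\lesssim \exp\bigl(-c\,n|z|\bigr)
\]
up to a polynomial factor in $n|z|$. Such approximants are classical; one can integrate a normalized Jackson-type kernel, or use the Eremenko--Yuditskii construction. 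The margin condition $G([-t,t])\le Lt^\beta$ is then used through a layer-cake decomposition over dyadic shells $|z|\in(2^{k}/n,2^{k+1}/n]$. The innermost shell $|z|\le1/n$ contributes at most $2L n^{-\beta}$. The $k$th shell contributes at most $L(2^{k+1}/n)^\beta e^{-c2^k}$, and summing over $k$ gives $C_{\beta,L}n^{-\beta}$. The same bound holds for $H$, which proves the upper bound.

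\emph{Lower bound.} We need $G,H\in\calG_{\beta,L}$ with matching moments through degree $n$ and $|\theta(G)-\theta(H)|\gtrsim n^{-\beta}$. By duality (Hahn--Banach, or the Markov--Krein moment-space duality), the largest value of $\int\sgn\,\dd\mu$ over signed measures $\mu$ orthogonal to polynomials of degree at most $n$ equals a best weighted $L^1$ approximation error of $\sgn$. The weight is dictated by the constraint on $|\mu|$. The cleanest concrete route is therefore to fix a reference density $g_0(z)=c\,|z|^{\beta-1}$, or a compactly supported smooth variant that keeps $G_0([-t,t])\le (L/2)t^\beta$. We then set $G=G_0+\mu_+$-adjustments and $H=G_0+\mu_-$-adjustments with $\mu=\mu_+-\mu_-$, where the perturbation satisfies $|\mu|\le g_0/2$ pointwise, so that both measures stay in $\calG_{\beta,L}$ (with $L\ge1$ leaving room). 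We also require $\mu\perp\mathcal P_n$ and $\int\sgn\,\dd\mu\gtrsim n^{-\beta}$. The duality identity gives
\[
\sup\Bigl\{\int\sgn\, \dd\mu:\ |\mu|\le g_0/2,\ \mu\perp\mathcal P_n\Bigr\}=\tfrac12\inf_{\deg P\le n}\int|\sgn-P|\,g_0\,\dd z .
\]
It therefore remains to lower-bound the weighted $L^1$ error $E_n(\sgn)_{1,g_0}\gtrsim n^{-\beta}$ for the doubling weight $|z|^{\beta-1}$, which has a singularity or zero at $0$. This is exactly the inverse-theorem input from \citet{Kopotun}. If the error decayed faster than $n^{-\beta}$, the weighted modulus of smoothness of $\sgn$ at scale $1/n$ would be $o(n^{-\beta})$. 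But a direct computation gives this modulus as $\asymp\int_{|z|\le 1/n}g_0\asymp n^{-\beta}$, because the jump at $0$ is seen across a window of width $1/n$ carrying weighted mass $n^{-\beta}$.

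The main obstacle is this lower bound: making the inverse-theorem step rigorous for all $n$ rather than along a subsequence, since inverse theorems naturally control sums of errors. We would handle this as follows. Because $E_m$ is monotone in $m$, the Kopotun-type estimate $\omega_\varphi(\sgn,1/n)_{1,g_0}\lesssim n^{-r}\sum_{m\le n}m^{r-1}E_m$, applied with $r>\beta$, forces $E_m\gtrsim m^{-\beta}$ on a set of $m\in[n/C,n]$. Monotonicity of $E_m$ then transfers this to every $n$, with constants depending on $\beta,L$. The small-$n$ cases are absorbed into $c_{\beta,L}$ using the trivial positive lower bound at each fixed $n$.
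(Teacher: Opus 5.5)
Your architecture matches the paper's. For the upper bound you integrate a bounded localized approximant of $\sgn$ over dyadic shells. For the lower bound you use the $L_1$ dual certificate against the weight $|z|^{\beta-1}$ to build moment-matched alternatives, Kopotun's weighted inverse theorem to get $E_n(\beta)\gtrsim n^{-\beta}$, and monotonicity of the modulus with \cref{thm:scalar-modulus} for \eqref{eq:N-only-lower}.

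However, the approximant you invoke for the upper bound does not exist. No family $P\in\Pi_n$ with $\norm{P}_\infty\le B$ can satisfy $|\sgn(z)-P(z)|\le Ce^{-cn|z|}$, even times a polynomial in $n|z|$, with $B,C,c$ independent of $n$. Here is why. The polynomial $S:=P^2-1\in\Pi_{2n}$ would obey $|S(x)|\le(B+1)Ce^{-cn|x|}$ on $[-1,1]$. Since $P$ changes sign on $[-A/n,A/n]$ for a fixed $A$, $S=-1$ somewhere within $O(1/n)$ of the origin. Set $f_n(v):=S(-\sin(v/n))$ and extract a locally uniform limit. This gives a nonzero entire function of exponential type at most $2$, bounded on $\R$, with $|f(v)|\lesssim e^{-2c|v|/\pi}$. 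That violates the logarithmic-integral condition $\int\log|f(v)|\,(1+v^2)^{-1}\dd v>-\infty$. A Jackson kernel yields only polynomial localization $\min\{1,(n|z|)^{-s}\}$. The Eremenko--Yuditskii polynomials are extremal for one fixed gap $\delta$, not for all scales simultaneously. The repair is immediate, because your shell sum only needs $\sum_k 2^{k\beta}\sup_{\text{shell }k}|\sgn-P|<\infty$. The localized Jackson polynomial of \cref{lem:jackson} with any fixed $s>\beta$ gives shell errors $C_s2^{-ks}$, which is exactly the paper's \cref{lem:integrated-approx}.

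In the lower bound, three smaller points need tightening.
\begin{itemize}
\item Your margin bookkeeping fails for $L$ near $1$. A probability measure on $[-1,1]$ cannot satisfy $G_0([-t,t])\le (L/2)t^\beta$ at $t=1$ when $L<2$. Using $\mu_\beta$ itself with $|\mu|\le g_0/2$ only bounds the small-ball mass by $\tfrac32 t^\beta$. There are two fixes. You can take the certificate odd, which symmetrization always permits, so that $\int_{-t}^t h\,\dd\mu_\beta=0$ and the perturbed measures inherit exactly the masses $t^\beta$. Alternatively, mix with $\tfrac12(\delta_{-1}+\delta_{+1})$ as in \cref{lem:duality-moment-pair}.
\item In the extraction, the terms with $m<n/C$ in Kopotun's sum are negligible only because of the already-proved bound $E_m\lesssim m^{-\beta}$. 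You must also apply the inequality at scale $Cn$, not $n$, so that the good index satisfies $m\ge n$ and hence $E_n\ge E_m$. This is the paper's choice $m=Mn$.
\item The bound $\omega^r_\varphi(\sgn,A,t)_{1,w_\beta}\gtrsim t^\beta$ should be derived from the structure of Kopotun's complete modulus, not from a window-mass heuristic. Its main-part term vanishes for $\sgn$, and its local best-approximation term scales exactly as $t^\beta$ times a fixed positive constant.
\end{itemize}
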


\begin{figure}[t]
\centering
\begin{subfigure}[t]{0.49\linewidth}
\centering
\includegraphics[width=\linewidth]{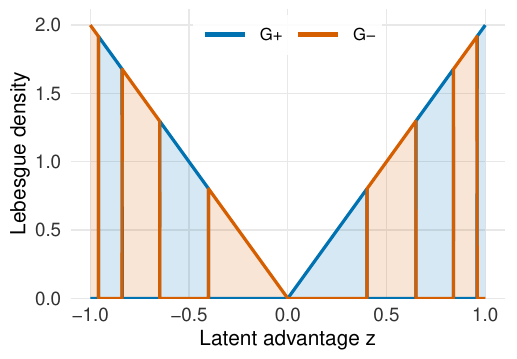}
\subcaption{A nonidentifiable latent pair.}
\end{subfigure}\hfill
\begin{subfigure}[t]{0.49\linewidth}
\centering
\includegraphics[width=\linewidth]{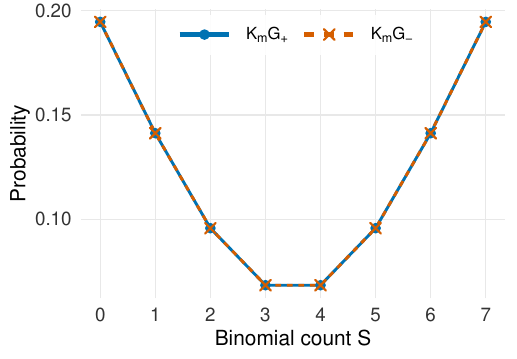}
\subcaption{Their identical count laws.}
\end{subfigure}
\caption{A numerical realization of the dual pair in \eqref{eq:dual-pair-main} for $\beta=2$ and $m=n=7$, corresponding to margin constant $L=2$.  Since the dual witness is bang--bang, the Lebesgue densities of $G_{7,+}$ and $G_{7,-}$ occupy complementary shells beneath the common envelope $2|z|$.  The measures agree on every moment through degree seven, so their seven-trial binomial-mixture laws coincide even though their sign-imbalance functionals differ.}
\label{fig:dual-moment-pair}
\end{figure}

The minimax consequence~\eqref{eq:N-only-lower} follows directly from \cref{thm:scalar-modulus} and monotonicity of the modulus: $\calR^{\mathrm{dec}}_{Q,n}(\beta,L)\gtrsim\omega_{n,\beta,L}(Q^{-1/2})\ge\omega_{n,\beta,L}(0)$.  It therefore remains to establish the two bounds in \eqref{eq:identification-floor}.

For the upper bound, a localized Jackson-kernel construction gives a degree-$n$ polynomial $P_n$, which we call the \emph{localized Jackson polynomial}, such that
\[
|P_n(z)-\sgn(z)|
\lesssim_s
\min\{1,(n|z|)^{-s}\},
\qquad \norm{P_n}_\infty\le1,
\]
where $s>\beta$.  Integrating this pointwise bound under the margin condition gives $\int|\sgn-P_n|\,\dd G\lesssim_{\beta,L}n^{-\beta}$ uniformly over $G\in\calG_{\beta,L}$.  If $K_nG=K_nH$, then $G$ and $H$ have identical moments through degree $n$, and hence
\[
\int P_n\,\dd G=\int P_n\,\dd H,
\qquad
|\theta(G)-\theta(H)|
\le
\int|\sgn-P_n|\,\dd G
+
\int|\sgn-P_n|\,\dd H
\lesssim_{\beta,L}n^{-\beta}.
\]
Taking the supremum over all such pairs proves the upper bound on the identification width.

The lower bound uses a related but different approximation statement.  The argument above requires a single polynomial whose error integrates uniformly over every $G\in\calG_{\beta,L}$.  For the lower bound, it suffices to work under the canonical measure
\[
\mu_\beta(\dd z):=(\beta/2)|z|^{\beta-1}\one_{[-1,1]}(z)\,\dd z
\]
and determine the best weighted approximation error.  Writing $\Pi_n$ for the real algebraic polynomials of degree at most $n$, \cref{lem:weighted-sign} shows that
\begin{equation}
\label{eq:weighted-L1}
E_n(\beta)
:=
\inf_{P\in\Pi_n}
\int|\sgn(z)-P(z)|\,\dd\mu_\beta(z)
\asymp_\beta n^{-\beta}.
\end{equation}
The upper inequality here follows from the localized Jackson construction; the reverse inequality uses the weighted inverse theorem of \citet[Theorem~9.1]{Kopotun}.

The $L_1$ analogue of linear-programming duality, applied to the optimization problem defining $E_n(\beta)$, provides an odd function $h_n$ with $\norm{h_n}_\infty\le1$ that annihilates $\Pi_n$ and attains the approximation error $E_n(\beta)$.  The measures
\begin{equation}
\label{eq:dual-pair-main}
\dd G_{n,+}=(1+h_n)\dd\mu_\beta,
\qquad
\dd G_{n,-}=(1-h_n)\dd\mu_\beta
\end{equation}
form a reflected pair of probability measures with identical moments through degree $n$, hence inducing the same binomial-mixture law, while their targets are separated by $2E_n(\beta)\asymp n^{-\beta}$.  \Cref{fig:dual-moment-pair} illustrates this exact nonidentifiability mechanism for one numerically constructed pair.  Mixing both measures with the same symmetric distribution on $\{-1,+1\}$ places them in $\calG_{\beta,L}$ while preserving moment matching and the order of the target separation.  Full proofs of the localized Jackson construction, the weighted inverse argument, and the dual moment-matching construction are given in \cref{app:identification}.

The dual construction in \cref{lem:duality-moment-pair} is formulated for an arbitrary cancellation degree $m$.  The choice $m=n$ made here cancels every moment visible through the $n$-trial binomial channel and hence gives exact equality of the count laws.  The same family reappears in \cref{thm:spectral-modulus-lower} with $m=d<n$: only the first $d$ moments cancel, and the Durrmeyer spectrum controls the information that leaks through the remaining modes.

\subsection{Finite-\texorpdfstring{$Q$}{Q} lower bounds}

The modulus characterization also isolates two distinct finite-sample obstructions.  The first is ordinary population uncertainty: even when every latent advantage is separated from zero, $Q$ questions reveal their population imbalance only at the root-$Q$ scale.

\begin{proposition}[Hard-margin root-$Q$ lower bound]
\label{thm:rootQ-scalar}
For every $n\ge1$, $\beta>0$, $L\ge1$, and $0<\eps\le1$,
\begin{equation}
\label{eq:rootQ-modulus}
\omega_{n,\beta,L}(\eps)\ge\eps.
\end{equation}
Consequently, for every $Q,n\ge1$,
\begin{equation}
\label{eq:rootQ-scalar}
\calR^{\mathrm{dec}}_{Q,n}(\beta,L)
\ge cQ^{-1/2}
\end{equation}
for a universal constant $c>0$.  Both claims hold on a two-atom subclass supported on $Z\in\{-1,+1\}$ and therefore under a hard margin.
\end{proposition}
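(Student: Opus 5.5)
Restrict to two-atom laws $G_a:=\frac{1+a}{2}\delta_{+1}+\frac{1-a}{2}\delta_{-1}$ with $a\in[-1,1]$. Each $G_a$ puts no mass in $[-t,t]$ for $t<1$, and $G_a([-1,1])=1\le L$, so $G_a\in\calG_{\beta,L}$ for every $\beta>0$ and $L\ge1$. Also $\theta(G_a)=a$. The channel at $z=\pm1$ is deterministic: $\Bin(n,1)=\delta_n$ and $\Bin(n,0)=\delta_0$. Hence $K_nG_a$ is the two-point law with mass $(1+a)/2$ at $k=n$ and $(1-a)/2$ at $k=0$. Thus the count reveals $\sgn(Z)$ exactly, independently of $n$, and the problem is exactly that of Bernoulli sampling.

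For the modulus bound \eqref{eq:rootQ-modulus}, compare $G_{a}$ and $G_{-a}$. Their Hellinger distance satisfies
\[
h^2(K_nG_a,K_nG_{-a})=2\Bigl(\sqrt{\tfrac{1+a}{2}}-\sqrt{\tfrac{1-a}{2}}\Bigr)^2=2\bigl(1-\sqrt{1-a^2}\bigr)\le 2a^2 ,
\]
using $1-\sqrt{1-x}\le x$ for $x\in[0,1]$. So $h\le\sqrt2\,a$.

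Choose $a=\eps/\sqrt2$, which is at most $1$ since $\eps\le1$. Then $h\le\eps$ and $|\theta(G_a)-\theta(G_{-a})|=2a=\sqrt2\,\eps\ge\eps$, giving $\omega_{n,\beta,L}(\eps)\ge\eps$.

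For \eqref{eq:rootQ-scalar}, invoke the lower half of \cref{thm:scalar-modulus}:
\[
\calR^{\mathrm{dec}}_{Q,n}\ge c\,\omega_{n,\beta,L}(Q^{-1/2})\ge cQ^{-1/2},
\]
where the second step uses $Q^{-1/2}\le1$.

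To make the claim "on the two-atom subclass" self-contained, I would also give the direct two-point argument. Take the hypotheses $G_{\pm a}$ with $a=Q^{-1/2}/2$, which have opposite optimal decisions. The $Q$-fold product has $h^2\le Q\cdot 2a^2=1/2$, so the total variation distance is bounded away from $1$. Le Cam's two-point bound then gives
\[
\max_\pm \E\,\mathcal L\ \ge\ a\,\frac{1-\TV}{2}\ \gtrsim\ Q^{-1/2}.
\]
This avoids any dependence on how the theorem's constant was derived.

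The only mildly delicate points are the two elementary estimates just used. The first is the Hellinger bound $1-\sqrt{1-a^2}\le a^2$, which is immediate. The second is the tensorization step: $1-h^2(P^{\otimes Q},R^{\otimes Q})/2=(1-h^2/2)^Q$, combined with the bound of TV by Hellinger. Everything else is routine, and I expect no real obstacle.
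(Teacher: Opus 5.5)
Your proof is correct and follows the paper's argument: you use the same two-atom pair $G_{\pm a}$ on $\{-1,+1\}$, the same Hellinger computation $h^2=2(1-\sqrt{1-a^2})\le 2a^2$, and the same appeal to the lower half of \cref{thm:scalar-modulus}. The paper takes $a=\eps/2$ to get target separation exactly $\eps$, while your $a=\eps/\sqrt2$ gives $\sqrt2\,\eps$, which works equally well; your supplementary direct Le Cam argument is a sound self-contained alternative that reproduces the two-point step already inside that theorem's proof.
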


Indeed, under
\[
G_{\sigma,\eta}
=
\frac{1+\sigma\eta}{2}\delta_{+1}
+
\frac{1-\sigma\eta}{2}\delta_{-1},
\qquad \sigma\in\{-1,+1\},
\]
every observed count is deterministically $0$ or $n$.  The two output laws differ only through the population imbalance, and their Hellinger distance is of order $\eta$, while their targets differ by $2\eta$.  Thus finite-generation noise plays no role.  The exact modulus calculation is given in \cref{app:finite-lower}.

For comparison, an elementary construction based on a rare population of near-boundary questions with a common orientation gives the uniform lower bound $\calR^{\mathrm{dec}}_{Q,n}(\beta,L)\gtrsim_{\beta,L}(Qn)^{-\beta/(\beta+2)}$.  Although it illustrates why pooling weak count evidence can help, it is dominated by the spectral lower bound throughout the asymptotic region in which it improves on both endpoint bounds.  We give the construction and comparison in \cref{app:rare-boundary}.

\subsection{Observed questions and minimax transfer}
\label{sec:observed-transfer}

The count-only experiment in \eqref{eq:scalar-marginal-experiment} deliberately discards the question identities.  This is harmless for upper bounds, because a procedure in the original experiment can always ignore the questions.  It is not harmless for lower bounds: an estimator that observes $q_j$ may try to infer the latent advantage $Z(q_j)$ from the question itself.  We now show that, in a fully nonparametric observed-question model, this extra information does not improve the minimax rate.  The proof embeds an arbitrary scalar hard pair into deterministic question-to-advantage maps and randomizes only the assignment of their latent advantages to otherwise uninformative question cells.

We model the question population as a nonatomic standard probability
space; see \cref{rem:nonatomic} for the role of this assumption and a
countable alternative.  Every such space is measure-isomorphic modulo null
sets to $([0,1],\mathcal B,\lambda)$, which we therefore use as a canonical
representation.  No regularity or structural relation between $q$ and
$Z(q)$ is imposed.  Let
\[
\mathfrak Z_{\beta,L}
:=
\{Z:[0,1]\to[-1,1]\text{ measurable}:Z_\#\lambda\in\calG_{\beta,L}\},
\]
where $Z_\#\lambda$ is the pushforward law of $Z(q)$ for $q\sim\lambda$.  For $Z\in\mathfrak Z_{\beta,L}$, the observed-question antisymmetric experiment is
\begin{equation}
\label{eq:full-observed-scalar}
q_1,\dots,q_Q\overset{\mathrm{iid}}{\sim}\lambda,
\qquad
S_j\mid q_j
\sim
\Bin\!\left(n,\frac{1+Z(q_j)}2\right),
\quad j=1,\dots,Q.
\end{equation}
The estimator sees the full pairs $(q_j,S_j)$.  Define
\[
\theta_Z
:=
\int_0^1\sgn(Z(q))\,\dd q
=
\theta(Z_\#\lambda)
\]
and
\begin{equation}
\label{eq:minimax-observed}
\calR^{\mathrm{obs}}_{Q,n}(\beta,L)
:=
\inf_{\widehat\sigma}
\sup_{Z\in\mathfrak Z_{\beta,L}}
\E_Z\!\left[
|\theta_Z|\,
\one\{\widehat\sigma\theta_Z<0\}
\right],
\end{equation}
where the infimum ranges over all, possibly randomized, rules based on $((q_j,S_j))_{j=1}^Q$.  The raw pair of model counts may be used instead of $S_j$ without changing the experiment, because $S_j$ is sufficient conditional on $q_j$ in the antisymmetric family.

Every count-only rule is admissible in \eqref{eq:minimax-observed}.  Moreover, every $G\in\calG_{\beta,L}$ can be realized as $Z_\#\lambda$ by a quantile map.  Hence
\begin{equation}
\label{eq:obs-le-count}
\calR^{\mathrm{obs}}_{Q,n}(\beta,L)
\le
\calR^{\mathrm{dec}}_{Q,n}(\beta,L).
\end{equation}
The reverse comparison is the nontrivial direction.

The reverse comparison uses two auxiliary constructions.  First, an
arbitrary scalar mixing distribution is approximated by an equal-weight
discrete measure using outward quantiles.  Each mass block is moved to its
endpoint farther from zero, preserving the margin bound, while the target
and induced count law converge to their original values.  Equal weights
then allow the $M$ atoms to be assigned exactly to $M$ equal-mass question
cells.  We average over uniform random permutations of this assignment.
After conditioning away collisions among both the sampled question cells
and the assigned atom indices, the resulting augmented experiment agrees
with its count-only counterpart; the total discrepancy is $O(Q^2/M)$.
Applying this construction to a reflected pair from the Hellinger modulus
and letting $M$ grow gives the lower bound.  Details are provided in
\cref{app:observed-transfer}.

\begin{theorem}[Transfer from the scalar count experiment to observed questions]
\label{thm:observed-transfer}
There are universal constants $c_0,c_1>0$ such that, for every $Q,n\ge1$, $\beta>0$, and $L\ge1$,
\begin{equation}
\label{eq:observed-transfer}
 c_1\,\omega_{n,\beta,L}(c_0Q^{-1/2})
\le
\calR^{\mathrm{obs}}_{Q,n}(\beta,L)
\le
\calR^{\mathrm{dec}}_{Q,n}(\beta,L).
\end{equation}
One may take $c_0=1/64$ and $c_1=1/16$.
\end{theorem}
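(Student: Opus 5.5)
The upper inequality is already recorded in \eqref{eq:obs-le-count}. Every count-only rule is admissible in the observed experiment, and every $G\in\calG_{\beta,L}$ is realized as $Z_\#\lambda$ by a quantile map. So it remains to prove the lower bound.

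The plan follows the roadmap given before the statement. Fix $\eps=c_0Q^{-1/2}$ and a near-optimal pair $G,H\in\calG_{\beta,L}$ for $\omega_{n,\beta,L}(\eps)$. First, symmetrize as in the proof of the lower half of \cref{thm:scalar-modulus} to get a pair $G',H'$ with these properties:
\begin{itemize}
\item $h(K_nG',K_nH')\le\eps$;
\item $\theta(G')\ge\tfrac14|\theta(G)-\theta(H)|$ and $\theta(H')\le-\tfrac14|\theta(G)-\theta(H)|$.
\end{itemize}
I would take the reflection $R:z\mapsto -z$ and form mixtures such as $G'=\tfrac12(G+RH)$ and $H'=\tfrac12(H+RG)$. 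Reflection preserves the margin class, and convexity of squared Hellinger distance controls the count laws. The paper handles this step in the appendix, so I will cite it.

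Second, discretize. For each of $G',H'$, build an equal-weight $M$-atom measure by outward quantiles: split $[0,1]$ into $M$ blocks of the quantile function and send each block to its endpoint farther from zero. This only moves mass away from the origin, so $G'_M([-t,t])\le G'([-t,t])\le Lt^\beta$ and the margin class is preserved. As $M\to\infty$, $\theta(G'_M)\to\theta(G')$, since the sign changes only on the at most one block straddling zero, which has mass $1/M$. Also $K_nG'_M\to K_nG'$ because the kernel is continuous in $z$. Hence the Hellinger distance is at most $2\eps$ for large $M$.

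Third, embed into questions. Partition $[0,1]$ into $M$ equal cells and draw a uniform random permutation $\pi$. The map $Z_\pi$ equals atom $x_{\pi(i)}$ on cell $i$; its pushforward is exactly $G'_M$, so $\theta_{Z_\pi}=\theta(G'_M)$. Place the uniform prior over $\pi$ under each hypothesis and compare the Bayes-mixed laws of $((q_j,S_j))_{j\le Q}$. On the event that the $Q$ sampled cells are distinct, the assigned atom indices form a uniform draw without replacement. This is within total variation $O(Q^2/M)$ of iid draws, and the cell locations themselves are then independent of everything and uninformative. So up to $O(Q^2/M)$ total variation, the observed experiment equals the count-only product $(K_nG'_M)^{\otimes Q}$ together with independent uniform cells.

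Finally, apply a two-point argument. Tensorizing Hellinger gives $h^2\le Q\cdot 4\eps^2\le 4c_0^2$ for the product laws, so the total variation between hypotheses is at most a small constant plus $O(Q^2/M)$. Any rule then errs with probability bounded below under one hypothesis. Since the signs are opposite and $|\theta|\gtrsim\omega$ under each, the regret is at least $c_1\omega_{n,\beta,L}(c_0Q^{-1/2})$. Sending $M\to\infty$ completes the argument, with the explicit constants coming from tracking these steps.

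I expect the main obstacle to be the exchangeability/collision step: showing that the mixture over permutations, restricted to distinct cells and atom indices, coincides exactly with the count-only law, and bounding both collision events by $O(Q^2/M)$.
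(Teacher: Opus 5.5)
Your proposal is correct and follows the paper's proof essentially step for step: reflection symmetrization of a near-extremal modulus pair, then outward-quantile equal-weight discretization that preserves the margin class, then a uniform random-permutation prior over question cells whose averaged law is within $O(Q^2/M)$ of $\lambda^{\otimes Q}\otimes(K_nG'_M)^{\otimes Q}$ by the collision argument, and finally a two-point Le Cam bound via tensorized Hellinger. The differences are cosmetic: your symmetrization in fact gives $\theta(G')=\tfrac12|\theta(G)-\theta(H)|$, and you discretize both measures separately through a single quantile function instead of discretizing one measure and reflecting it; with $c_0=1/64$ your bookkeeping comfortably yields $c_1=1/16$.
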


Combining Theorems~\ref{thm:observed-transfer} and~\ref{thm:scalar-modulus} with Lemma~\ref{lem:modulus-scaling} yields the desired equivalence.

\begin{corollary}[Count-only and observed-question minimax risks are equivalent]
\label{cor:observed-modulus}
For fixed $\beta,L$ and every $Q,n\ge1$,
\begin{equation}
\label{eq:observed-modulus-equivalence}
\calR^{\mathrm{obs}}_{Q,n}(\beta,L)
\asymp
\calR^{\mathrm{dec}}_{Q,n}(\beta,L)
\asymp
\omega_{n,\beta,L}(Q^{-1/2}),
\end{equation}
where the implicit constants are universal.
\end{corollary}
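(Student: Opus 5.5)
The corollary follows by chaining three earlier results and absorbing the numerical constants with the modulus scaling property.

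\emph{Upper bounds.} The left comparison $\calR^{\mathrm{obs}}_{Q,n}(\beta,L)\le\calR^{\mathrm{dec}}_{Q,n}(\beta,L)$ is \eqref{eq:obs-le-count}, which is also the right inequality in \eqref{eq:observed-transfer}. For the count-only risk, \cref{thm:scalar-modulus} gives
\[
\calR^{\mathrm{dec}}_{Q,n}(\beta,L)\le\calR^{\mathrm{abs}}_{Q,n}(\beta,L)\le C\,\omega_{n,\beta,L}(Q^{-1/2}),
\]
with $C$ universal. Chaining these, both minimax risks are at most $C\,\omega_{n,\beta,L}(Q^{-1/2})$.

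\emph{Lower bounds.} \cref{thm:observed-transfer} gives
\[
\calR^{\mathrm{obs}}_{Q,n}(\beta,L)\ge\tfrac1{16}\,\omega_{n,\beta,L}\bigl(Q^{-1/2}/64\bigr).
\]
This radius is a fixed multiple of the target resolution, so we must compare $\omega(\eps/64)$ with $\omega(\eps)$. I will invoke \cref{lem:modulus-scaling}, which I expect to state that $\omega_{n,\beta,L}(a\eps)\le\max(1,a)\,\omega_{n,\beta,L}(\eps)$ with universal constants. The natural proof uses convexity of $\calG_{\beta,L}$. Given a pair $(G,H)$ at Hellinger distance at most $\eps$, form $H_\lambda=(1-\lambda)G+\lambda H$ with $\lambda=1/64$. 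Then $\theta(G)-\theta(H_\lambda)=\lambda\bigl(\theta(G)-\theta(H)\bigr)$, and joint convexity of $h^2$ gives $h^2(K_nG,K_nH_\lambda)\le\lambda\,h^2(K_nG,K_nH)\le\lambda\eps^2$, so $h(K_nG,K_nH_\lambda)\le\sqrt\lambda\,\eps$. This shows $\omega(\sqrt\lambda\,\eps)\ge\lambda\,\omega(\eps)$. Taking $\sqrt\lambda=1/64$, i.e.\ $\lambda=64^{-2}$, yields $\omega(\eps/64)\ge 64^{-2}\,\omega(\eps)$. Applying this with $\eps=Q^{-1/2}$ gives
\[
\calR^{\mathrm{obs}}_{Q,n}(\beta,L)\ge\frac{1}{16\cdot 64^{2}}\,\omega_{n,\beta,L}(Q^{-1/2}),
\]
with a universal constant. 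Since $\calR^{\mathrm{obs}}_{Q,n}(\beta,L)\le\calR^{\mathrm{dec}}_{Q,n}(\beta,L)$, the same lower bound holds for $\calR^{\mathrm{dec}}_{Q,n}(\beta,L)$.

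\emph{Combining and the main check.} The two parts sandwich both risks between universal multiples of $\omega_{n,\beta,L}(Q^{-1/2})$, which is \eqref{eq:observed-modulus-equivalence}. The only step needing care is the scaling argument. It requires that the convex combination $H_\lambda$ remains in $\calG_{\beta,L}$; this holds because the class is defined by linear inequalities $G([-t,t])\le Lt^\beta$, so it is convex. It also requires that the affine maps $G\mapsto K_nG$ and $G\mapsto\theta(G)$ commute with mixing, which they do. With these, the constants depend on neither $\beta$, $L$, $Q$ nor $n$, as claimed.
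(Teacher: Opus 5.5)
Your proof is correct and follows the paper's own route: chain \cref{thm:observed-transfer} with \cref{thm:scalar-modulus}, then absorb the factor $1/64$ in the radius using the convexity bound $\omega_{n,\beta,L}(\eps)\le 64^2\,\omega_{n,\beta,L}(\eps/64)$, which your mixing argument reproves. One small inconsistency: \cref{lem:modulus-scaling} gives quadratic scaling $(\eps_2/\eps_1)^2$, not the linear factor $\max(1,a)$ you anticipated, but your derivation produces and uses exactly the quadratic version, so the argument is unaffected.
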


\begin{corollary}[Transfer to general ensembles]
\label{cor:general-lower-transfer}
Fix $K\ge2$ and $A\ge2$.  For each $Z\in\mathfrak Z_{\beta,L}$,
consider the binary-answer $K$-model environment on $([0,1],\lambda)$ in
which models 1 and 2 obey \eqref{eq:antisymmetric}, while every model
$i\ge3$ returns either answer with probability $1/2$.  Then
\begin{equation}
\label{eq:general-lower-transfer}
\inf_{\widehat w}
\sup_{Z\in\mathfrak Z_{\beta,L}}
\E_Z\!
\left[
R_Z(\widehat w)-\inf_{w\in\Delta_K}R_Z(w)
\right]
\ge
\calR^{\mathrm{obs}}_{Q,2N}(\beta,L)
\gtrsim
\omega_{2N,\beta,L}(Q^{-1/2}).
\end{equation}
\end{corollary}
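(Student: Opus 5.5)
The plan is a reduction argument: every learner for the general $K$-model environment is turned into an observed-question scalar decision rule $\widehat\sigma$. We then show that its excess clean risk dominates the scalar regret $|\theta_Z|\one\{\widehat\sigma\theta_Z<0\}$ pointwise. The second inequality in \eqref{eq:general-lower-transfer} is then just \cref{thm:observed-transfer} combined with \cref{lem:modulus-scaling}, which turns $\omega_{2N,\beta,L}(c_0Q^{-1/2})$ into $\omega_{2N,\beta,L}(Q^{-1/2})$ up to a universal constant.

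The first step computes the clean risk in this environment. Relabel the answers as gold and other. Model $i$ has clean margin $M^{(i)}_q=2p_i(\mathrm{gold}\mid q)-1$, so model 1 has margin $Z(q)$, model 2 has $-Z(q)$, and every model $i\ge3$ has margin $0$. With two answers the ensemble margin is linear in $w$:
\[
M_q(w)=(w_1-w_2)Z(q).
\]
So $w_1>w_2$ has risk $\lambda\{Z\le0\}$, $w_1<w_2$ has risk $\lambda\{Z\ge0\}$, and $w_1=w_2$ has risk one. Because $Z_\#\lambda\in\calG_{\beta,L}$ puts no mass at zero, \cref{lem:side} gives
\[
\inf_w R_Z(w)=\min\{R(+),R(-)\}.
\]
Choosing the side opposite to $\sgn\theta_Z$, or choosing the tie set, costs at least $|\theta_Z|$.

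The second step builds the scalar rule. Given $\widehat w$, set $\widehat\sigma=+1$ if $\widehat w_1\ge\widehat w_2$ and $-1$ otherwise. The pointwise bound
\[
R_Z(\widehat w)-\inf_w R_Z(w)\ge|\theta_Z|\one\{\widehat\sigma\theta_Z<0\}
\]
then follows from the first step. The one point to check is that a tie $\widehat w_1=\widehat w_2$ still incurs regret at least $|\theta_Z|$ when $\theta_Z<0$: its risk is one, which is at least $R(+)$.

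The third step, and the only real obstacle, is to show that $\widehat\sigma$ is an admissible rule in \eqref{eq:minimax-observed}. The general learner sees $q_j$ together with the counts of all $K$ models. The counts of models $i\ge3$ are $\Bin(N,1/2)$ independently of everything else, so the scalar rule can simulate them with its own randomization; \eqref{eq:minimax-observed} explicitly allows randomized rules. The counts of models 1 and 2 can be replaced by the raw pair, and the text after \eqref{eq:minimax-observed} states that this leaves the experiment unchanged. So the joint law of the learner's data under $Z$ is reproduced exactly from $((q_j,X_{1,j},X_{2,j}))_j$ plus independent randomness.

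One caveat: if the reference to the pair equivalence is read narrowly, we can instead generate the pair from $S_j$. By sufficiency, the conditional law of $(X_{1j},X_{2j})$ given $S_j$ does not depend on $z$, so the pair can be sampled from $S_j$ alone. Taking the supremum over $Z$ and then the infimum over $\widehat w$ gives the first inequality in \eqref{eq:general-lower-transfer}, with $n=2N$.
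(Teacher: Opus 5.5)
Your proposal is correct and follows the paper's proof essentially step for step. It uses the same margin identity $M_q(w)=(w_1-w_2)Z(q)$, the same induced rule that sends ties to $+1$ together with the check that a tie has excess risk $(1+|\theta_Z|)/2\ge|\theta_Z|$, the same sufficiency and simulation argument to make $\widehat\sigma$ admissible in \eqref{eq:minimax-observed}, and \cref{thm:observed-transfer} plus \cref{lem:modulus-scaling} for the second inequality (the paper cites \cref{cor:observed-modulus}, which packages exactly these). The paper's proof also checks that the embedded environments satisfy the one-sided margin condition at $w^\star$, but that only supports the interpretation and is not needed for \eqref{eq:general-lower-transfer} itself.
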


Consequently, the root-$Q$, identification-floor, and spectral lower bounds
obtained in the scalar model hold for any larger ensemble class containing
this subclass.  The proof verifies the zero-margin case, the sufficiency
reduction, and the one-sided margin condition; see
\cref{app:observed-transfer}.

\begin{remark}[Fresh questions and finite strings]
\label{rem:nonatomic}
The nonatomic formulation keeps the two sample sizes conceptually separate: $Q$ counts fresh questions, while $N$ is the generation budget attached to each question.
If questions are instead represented literally as finite strings, let $\mathcal X=\Sigma^\star$ with the discrete sigma-field $2^{\Sigma^\star}$.  The same constant-factor transfer rate holds for the distribution-free class
\[
\mathfrak P^{\mathrm{obs}}_{\beta,L}
:=
\left\{
(\calQ,Z):
\calQ\in\mathcal P(\mathcal X),\;
Z:\mathcal X\to[-1,1],\;
Z_\#\calQ\in\calG_{\beta,L}
\right\}.
\]
Indeed, we can use the same construction, taking $\calQ$ to be uniform
on $M$ distinct strings, randomly assigning the atoms of each equal-weight
approximation to those strings, and then choosing $M\gg Q^2$.  The proof otherwise repeats the
discretization, permutation coupling, and collision argument above and is omitted.
\end{remark}

\section{Scalar estimators and spectral upper bounds}
\label{sec:scalar-estimators}

We now turn from the minimax characterization and lower bounds to explicit procedures.  The two estimators developed below use the per-question generation counts in different ways.  The plug-in estimator first converts each count into an estimated sign and then averages these decisions across questions.  The affine moment estimator instead retains the count values and pools their information through a polynomial approximation to $\sgn$.

The plug-in estimator matches the root-$Q$ lower bound in the question-limited regime.  For the affine moment estimator, we derive a spectral upper bound that reaches the $n^{-\beta}$ identification floor when $Q$ is sufficiently large and also yields sharper guarantees at intermediate degrees.  The matching spectral lower bound and the resulting middle regime are developed in \cref{sec:spectral-middle}.

\subsection{Per-question plug-in estimator}

For each question, define the majority-side score and its average by
\[
\widehat\xi_j
:=
\one\{S_j>n/2\}-\one\{S_j<n/2\},
\qquad
\widehat\theta_{\mathrm{plug}}
:=
\frac1Q\sum_{j=1}^Q\widehat\xi_j.
\]
Thus $\widehat\xi_j$ estimates $\sgn(Z_j)$ and equals zero at an exact tie.  Set $\widehat\sigma_{\mathrm{plug}}=+1$ when $\widehat\theta_{\mathrm{plug}}\ge0$ and $\widehat\sigma_{\mathrm{plug}}=-1$ otherwise.

\begin{theorem}[Plug-in estimator]
\label{thm:plugin-scalar}
For a constant $C=C(\beta,L)$,
\begin{equation}
\label{eq:plugin-scalar}
\sup_{G\in\calG_{\beta,L}}
\E_G|\widehat\theta_{\mathrm{plug}}-\theta(G)|
\le
C\left(Q^{-1/2}+n^{-\beta/2}\right).
\end{equation}
The same expression upper-bounds the decision regret.
\end{theorem}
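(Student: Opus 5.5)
Decompose the error into a variance term and a bias term. Write $\xi(z):=\E[\widehat\xi_j\mid Z_j=z]$. The $\widehat\xi_j$ are iid with values in $[-1,1]$ and mean $\int\xi\,\dd G$. Then
\[
\E_G|\widehat\theta_{\mathrm{plug}}-\theta(G)|
\le
\E_G\Bigl|\widehat\theta_{\mathrm{plug}}-\int\xi\,\dd G\Bigr|
+
\Bigl|\int(\xi-\sgn)\,\dd G\Bigr|.
\]
By Cauchy--Schwarz, the first term is at most $\sqrt{\Var(\widehat\xi_1)/Q}\le Q^{-1/2}$. The whole proof then reduces to bounding the bias $\int|\xi(z)-\sgn(z)|\,\dd G(z)$ uniformly over $\calG_{\beta,L}$ by $C n^{-\beta/2}$. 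The decision-regret statement then follows immediately from \eqref{eq:pointwise-decision-reduction}.

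For the bias, fix $z>0$; the case $z<0$ is symmetric. Since $S\sim\Bin(n,(1+z)/2)$ has mean $n/2+nz/2$, we get $|\xi(z)-1|\le 2\Pp\{S\le n/2\}$. Hoeffding's inequality gives $\Pp\{S\le n/2\}\le \exp(-nz^2/2)$. Hence, for all $z$,
\[
|\xi(z)-\sgn(z)|\le 2\min\{1,e^{-nz^2/2}\}.
\]
At $z=0$ the bound is trivial because $G(\{0\})=0$.

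Integrate this envelope against $G$ using the margin condition, via a layer-cake or dyadic-shell decomposition. Split into the central part $|z|\le n^{-1/2}$, which carries mass at most $Ln^{-\beta/2}$, and the shells $2^{k-1}n^{-1/2}<|z|\le 2^kn^{-1/2}$ for $k\ge1$. Each shell has $G$-mass at most $L(2^kn^{-1/2})^\beta$ (capped at $1$ once $2^kn^{-1/2}>1$), and on it the integrand is at most $2e^{-4^{k-1}/2}$. Summing the shells gives
\[
\int|\xi-\sgn|\,\dd G\le 2Ln^{-\beta/2}\Bigl(1+\sum_{k\ge1}2^{k\beta}e^{-4^{k-1}/2}\Bigr)=C(\beta,L)\,n^{-\beta/2},
\]
and the series converges for every fixed $\beta$. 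Alternatively, one can write $\int e^{-nz^2/2}\dd G=\int_0^1 G(\{|z|\le \sqrt{2\log(1/u)/n}\})\,\dd u$ and bound the integrand by $L(2\log(1/u)/n)^{\beta/2}$, which produces a Gamma-function constant.

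The only mildly delicate step is this uniform integration against an arbitrary $G$ in the margin class. The margin bound holds only for $t\le1$, so shells beyond $|z|=1$ are empty and cause no trouble. Everything else is routine, and the constant depends only on $(\beta,L)$.
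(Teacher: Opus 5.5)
Your proposal is correct and follows essentially the same route as the paper: the same bias--fluctuation split with the $Q^{-1/2}$ variance bound, the same conditional Hoeffding envelope $2e^{-nz^2/2}$, and the same dyadic-shell integration at scale $n^{-1/2}$ under the margin condition, where your index $k$ is the paper's $j+1$. Your layer-cake alternative with a Gamma-function constant is a valid minor variant.
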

The proof, a direct conditional Hoeffding bound integrated under the margin condition, is deferred to \cref{app:plugin-proof}.

This estimator makes a hard sign decision separately for each question.  It is optimal in the question-limited regime but pays the larger finite-generation scale $n^{-\beta/2}$.

\begin{corollary}[Question-limited regime]
\label{cor:question-limited}
If $Q\le c_0 n^\beta$ for a fixed constant $c_0$, then
\begin{equation}
\label{eq:question-limited}
\calR^{\mathrm{dec}}_{Q,n}(\beta,L)
\asymp_{\beta,L,c_0}
Q^{-1/2}.
\end{equation}
\end{corollary}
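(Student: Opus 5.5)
The corollary follows by sandwiching $\calR^{\mathrm{dec}}_{Q,n}(\beta,L)$ between two earlier results. One is the universal root-$Q$ lower bound of \cref{thm:rootQ-scalar}. The other is the plug-in upper bound of \cref{thm:plugin-scalar}, specialized to the regime $Q\le c_0 n^\beta$.

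For the lower bound, I invoke \eqref{eq:rootQ-scalar} directly. It gives $\calR^{\mathrm{dec}}_{Q,n}(\beta,L)\ge cQ^{-1/2}$ for every $Q,n\ge1$ with a universal constant $c$. No assumption relating $Q$ and $n$ is needed on this side, since the bound comes from the hard-margin two-atom subclass.

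For the upper bound, I start from \eqref{eq:dec-le-abs}, or equivalently the final sentence of \cref{thm:plugin-scalar}. Using $\widehat\sigma_{\mathrm{plug}}$ gives
\[
\calR^{\mathrm{dec}}_{Q,n}(\beta,L)\le C(\beta,L)\bigl(Q^{-1/2}+n^{-\beta/2}\bigr).
\]
The hypothesis $Q\le c_0n^\beta$ is equivalent to $n^{-\beta}\le c_0Q^{-1}$, hence $n^{-\beta/2}\le c_0^{1/2}Q^{-1/2}$. Substituting yields
\[
\calR^{\mathrm{dec}}_{Q,n}(\beta,L)\le C(\beta,L)\bigl(1+c_0^{1/2}\bigr)Q^{-1/2}.
\]
Combining the two sides gives \eqref{eq:question-limited}. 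The implicit constants depend only on $\beta$, $L$ and $c_0$, as the statement allows.

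There is no real obstacle here; all the substance lies in \cref{thm:rootQ-scalar} and \cref{thm:plugin-scalar}. The only point needing care is the direction of the algebra: the condition $Q\lesssim n^\beta$ is exactly what makes the finite-generation term $n^{-\beta/2}$ of the plug-in bound no larger than the question term $Q^{-1/2}$.
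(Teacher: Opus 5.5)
Your proposal is correct and is essentially the paper's own argument: the plug-in bound of \cref{thm:plugin-scalar} becomes $O(Q^{-1/2})$ because $Q\le c_0n^\beta$ gives $n^{-\beta/2}\le c_0^{1/2}Q^{-1/2}$, and \cref{thm:rootQ-scalar} supplies the matching universal lower bound $cQ^{-1/2}$.
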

Indeed, the plug-in upper bound is $O(Q^{-1/2})$ in this regime, while \cref{thm:rootQ-scalar} gives the matching lower bound.

At the opposite endpoint, the pooled polynomial estimator developed next reaches the $n^{-\beta}$ identification floor once $Q$ is sufficiently large.

\subsection{The affine moment-estimation idea}

A different estimator avoids per-question thresholding.  Recall that $b_{k,n}(p):=\binom nkp^k(1-p)^{n-k}$.  Given $a=(a_0,\dots,a_n)\in\R^{n+1}$, define the statistic and its associated polynomial together by
\begin{equation}
\label{eq:affine-estimator}
\widehat\theta_a
:=
\frac1Q\sum_{j=1}^Q a_{S_j},
\qquad
P_a(z)
:=
\sum_{k=0}^n a_k
 b_{k,n}\!\left(\frac{1+z}{2}\right).
\end{equation}
Since $\Pp(S_j=k\mid Z_j=z)=b_{k,n}((1+z)/2)$,
\begin{equation}
\label{eq:bernstein-conditional-identity}
\E[a_{S_j}\mid Z_j=z]=P_a(z),
\qquad
\E_G\widehat\theta_a=\int P_a(z)\,\dd G(z).
\end{equation}
If the polynomial $P_a$ approximates $\sgn$, $\widehat\theta$ has a small bias; if the coefficients $a_k$ are stable, in the sense of having a small spread, it has small variance.
More precisely, define the coefficient spread
\[
A(a):=\max_k a_k-\min_k a_k.
\]
The variable $a_{S_j}$ is supported on an interval of length $A(a)$, so a small spread controls its fluctuation uniformly over $G$. Designing a good estimator $\widehat \theta_a$ is thus the problem of finding a \emph{stable polynomial approximation} to a discontinuous function.

For analysis it is more convenient to reverse this parametrization and design an estimator by choosing its polynomial.

\begin{lemma}[Bernstein parametrization]
\label{lem:bernstein-identity}
The map $a\mapsto P_a$ is a linear bijection from $\R^{n+1}$ onto $\Pi_n$.  Equivalently, every $P\in\Pi_n$ has unique coefficients $a_0(P),\dots,a_n(P)$ such that
\begin{equation}
\label{eq:bernstein-representation}
P(z)
=
\sum_{k=0}^n a_k(P)\,
 b_{k,n}\!\left(\frac{1+z}{2}\right).
\end{equation}
\end{lemma}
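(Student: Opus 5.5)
The plan is to show that the Bernstein basis $\{b_{k,n}((1+z)/2)\}_{k=0}^n$ is a basis of $\Pi_n$. The claim then follows from linear algebra: $a\mapsto P_a$ is linear, and each $P_a$ lies in $\Pi_n$, since every $b_{k,n}(p)$ has degree $n$ in $p$ and $p=(1+z)/2$ is affine in $z$. Because $\dim\R^{n+1}=\dim\Pi_n=n+1$, it suffices to prove injectivity, or equivalently that the $n+1$ functions $z\mapsto b_{k,n}((1+z)/2)$ are linearly independent. Once bijectivity is known, the coefficients $a_k(P)$ in \eqref{eq:bernstein-representation} are defined as the preimage of $P$, which is unique by injectivity.

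First, reduce to the variable $p\in[0,1]$. The substitution $z=2p-1$ is an invertible affine change of variables, so it maps $\Pi_n$ in $z$ bijectively onto the polynomials of degree at most $n$ in $p$. It therefore suffices to show that $b_{0,n},\dots,b_{n,n}$ are linearly independent as polynomials in $p$; this is the same fact already used in the proof of \cref{lem:moment-equivalence}.

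For linear independence, suppose $\sum_k a_k\binom nk p^k(1-p)^{n-k}\equiv0$. Divide by $(1-p)^n$ on $p\in[0,1)$ and set $x=p/(1-p)\in[0,\infty)$. The relation becomes $\sum_k a_k\binom nk x^k=0$ for all $x\ge0$. A polynomial in $x$ that vanishes on an infinite set is identically zero, so $a_k\binom nk=0$ and hence $a_k=0$ for every $k$.

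An alternative argument is triangularity. Expanding $b_{k,n}(p)=\binom nk p^k\sum_{i}\binom{n-k}{i}(-p)^i$, each $b_{k,n}$ has lowest-order term $\binom nk p^k$. The change-of-basis matrix to the monomials is therefore triangular with nonzero diagonal, hence invertible.

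No step is a real obstacle. The only care needed is the dimension count and the observation that the affine reparametrization preserves degree, so that the image of $a\mapsto P_a$ is contained in, and hence by dimension equals, $\Pi_n$.
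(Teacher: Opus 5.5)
Your proof is correct and follows essentially the same route as the paper, which simply notes that the Bernstein polynomials form a basis in $p$ and that $p=(1+z)/2$ is an invertible affine change of variables. The only difference is that you also verify the basis property directly, via the substitution $x=p/(1-p)$ or the triangular change of basis to monomials, whereas the paper takes it as known.
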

\begin{proof}
The functions $b_{k,n}((1+z)/2)$, $k=0,\dots,n$, form a basis of $\Pi_n$: the Bernstein polynomials form a basis in $p$, and $p=(1+z)/2$ is an invertible affine change of variable.
\end{proof}

For $P\in\Pi_n$, define
\begin{equation}
\label{eq:polynomial-estimator}
\widehat\theta_P
:=
\widehat\theta_{a(P)}
=
\frac1Q\sum_{j=1}^Q a_{S_j}(P),
\qquad
A(P):=A(a(P)).
\end{equation}
Thus $\widehat\theta_a$ and $\widehat\theta_P$ are the same estimator family under two parametrizations: the coefficient parametrization makes the statistic transparent, while the polynomial parametrization is better suited to estimator design.  Their performance
is captured by the tradeoff between  approximation bias and coefficient stability.

\begin{proposition}[Bias--stability bound]
\label{prop:bias-stability}
For every $P\in\Pi_n$ and $G\in\calG_{\beta,L}$,
\begin{equation}
\label{eq:bias-stability}
\E_G|\widehat\theta_P-\theta(G)|
\le
\int|P(z)-\sgn(z)|\,\dd G(z)
+
\frac{A(P)}{2\sqrt Q}.
\end{equation}
\end{proposition}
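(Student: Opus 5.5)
The plan is a standard bias–variance split using the triangle inequality. Write
\[
\widehat\theta_P-\theta(G)
=
\bigl(\widehat\theta_P-\E_G\widehat\theta_P\bigr)
+
\bigl(\E_G\widehat\theta_P-\theta(G)\bigr),
\]
so that $\E_G|\widehat\theta_P-\theta(G)|\le|\E_G\widehat\theta_P-\theta(G)|+\E_G|\widehat\theta_P-\E_G\widehat\theta_P|$.

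For the bias term, I will use \cref{lem:bernstein-identity} and the conditional identity \eqref{eq:bernstein-conditional-identity}. Together they give $\E_G\widehat\theta_P=\int P\,\dd G$. Since $\theta(G)=\int\sgn\,\dd G$, the bias is bounded by $\int|P-\sgn|\,\dd G$. The margin class plays no role beyond $G$ being a probability measure on $[-1,1]$.

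For the fluctuation term, Jensen (or Cauchy–Schwarz) gives $\E|\widehat\theta_P-\E\widehat\theta_P|\le\sqrt{\Var_G(\widehat\theta_P)}$. In the count-only experiment the $S_j$ are iid from $K_nG$, so $\Var(\widehat\theta_P)=\Var(a_{S_1}(P))/Q$. The variable $a_{S_1}(P)$ takes values in $[\min_k a_k,\max_k a_k]$, an interval of length $A(P)$. Popoviciu's inequality bounds the variance of any random variable supported on an interval of length $A$ by $A^2/4$. Hence the standard deviation of $\widehat\theta_P$ is at most $A(P)/(2\sqrt Q)$.

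Adding the two bounds gives \eqref{eq:bias-stability}. There is no real obstacle. The only points needing care are using the iid marginal structure \eqref{eq:scalar-marginal-experiment}, so the variance is not conditioned on the $Z_j$, and using Popoviciu's sharp constant $1/4$ rather than a cruder range bound, which is what produces the factor $1/2$.
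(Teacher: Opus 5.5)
Your proposal is correct and follows essentially the same route as the paper: you split into bias and fluctuation, you get the bias from $\E_G\widehat\theta_P=\int P\,\dd G$, and you bound the fluctuation via Jensen, independence across questions, and the $A(P)^2/4$ variance bound for a variable supported on an interval of length $A(P)$. The only difference is that you name Popoviciu's inequality explicitly, which the paper uses implicitly.
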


\begin{proof}
The conditional identity \eqref{eq:bernstein-conditional-identity} gives
$
\E_G\widehat\theta_P
=
\int P(z)\,\dd G(z),
$
and hence
\[
|\E_G\widehat\theta_P-\theta(G)|
\le
\int|P(z)-\sgn(z)|\,\dd G(z).
\]
Moreover, $a_{S_j}(P)$ is supported on an interval of length $A(P)$, so $\Var_G(a_{S_j}(P))\le A(P)^2/4$.  Independence across questions and Jensen's inequality give
\[
\E_G|\widehat\theta_P-\E_G\widehat\theta_P|
\le
\sqrt{\Var_G(\widehat\theta_P)}
\le
\frac{A(P)}{2\sqrt Q}.
\]
Combining the bias and fluctuation bounds proves \eqref{eq:bias-stability}.
\end{proof}

By \eqref{eq:pointwise-decision-reduction}, the same bound applies to the regret of the decision induced by $\widehat\theta_P$.  Estimator design has therefore become a bias--stability tradeoff: choose $P\in\Pi_n$ that approximates $\sgn$ over the margin class while keeping the spread of its degree-$n$ Bernstein coefficients small.

A self-contained Chebyshev-to-Bernstein calculation in \cref{app:coefficients} gives the following conservative coefficient bound.

\begin{lemma}[Explicit Bernstein coefficient bound]
\label{lem:B0}
There are universal constants $C_0<\infty$ and $B_0<8$ such that, if $P$ has degree at most $d$ and $\norm{P}_{L_\infty[-1,1]}\le1$, then every degree-$n$ Bernstein coefficient of $P(2p-1)$, for every $n\ge d$, satisfies
\[
|a_k(P)|\le C_0B_0^d.
\]
One may take $B_0=7.5$.
\end{lemma}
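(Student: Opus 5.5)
The plan is to expand $P$ in the Chebyshev basis, bound each shifted Chebyshev polynomial's degree-$n$ Bernstein coefficients through its power-basis expansion in $p$, and sum. Write $P=\sum_{r=0}^d c_rT_r$ with Chebyshev coefficients
\[
c_r=\frac{2}{\pi}\int_{-1}^1\frac{P(x)T_r(x)}{\sqrt{1-x^2}}\,\dd x
\]
(halved for $r=0$). Since $\norm{P}_{L_\infty[-1,1]}\le1$ and $|T_r|\le1$ on $[-1,1]$, this gives $|c_r|\le2$ for every $r$. By linearity of $P\mapsto a(P)$ (\cref{lem:bernstein-identity}), it then suffices to show that each shifted Chebyshev polynomial $T_r^*(p):=T_r(2p-1)$, viewed in $\Pi_n$ with $n\ge d\ge r$, has degree-$n$ Bernstein coefficients bounded in absolute value by $\gamma^r$ for some $\gamma<7.5$. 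Summing then gives
\[
|a_k(P)|\le 2\sum_{r=0}^d\gamma^r\le \frac{2\gamma}{\gamma-1}\,\gamma^d ,
\]
which is the claim with $B_0=\gamma$.

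To bound the coefficients of $T_r^*$, I will avoid explicit Chebyshev-to-Bernstein identities and go through the power basis in $p$. First, each monomial $p^j$ with $j\le n$ has the explicit degree-$n$ Bernstein representation
\[
p^j=\sum_{k=0}^n\frac{\binom kj}{\binom nj}\,b_{k,n}(p).
\]
This follows by differentiating $\sum_k b_{k,n}(p)\,x^k=(1-p+px)^n$ $j$ times in $x$ at $x=1$. Since $0\le\binom kj/\binom nj\le1$, every degree-$n$ Bernstein coefficient of $p^j$ lies in $[0,1]$. Second, if $T_r^*(p)=\sum_{j=0}^r\alpha_{r,j}p^j$, then every Bernstein coefficient of $T_r^*$ is bounded in absolute value by $\sum_j|\alpha_{r,j}|$.

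It remains to compute this $\ell_1$ norm. The key observation is that the zeros of $T_r^*$ all lie in $(0,1)$, so $T_r^*(p)=\text{const}\cdot\prod(p-p_i)$ with $p_i>0$. The coefficients $\alpha_{r,j}$ therefore alternate in sign, and
\[
\sum_j|\alpha_{r,j}|=|T_r^*(-1)|=|T_r(-3)|=T_r(3)=\cosh\bigl(r\operatorname{arccosh}3\bigr)\le(3+2\sqrt2)^r.
\]
This gives $\gamma=3+2\sqrt2\approx5.83<7.5$, and $C_0=2\gamma/(\gamma-1)$ is a universal constant.

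The step I expect to need the most care is the sign-alternation argument, that is, justifying that the power-basis $\ell_1$ norm equals $|T_r^*(-1)|$. The real-rootedness in $(0,1)$ makes this clean, since the Chebyshev zeros $\cos((2i-1)\pi/(2r))$ map into $(0,1)$ under $x\mapsto(1+x)/2$. If it proves awkward, a cruder fallback still suffices. One can bound the monomial coefficients of $T_r(x)$ by $T_r(i)$-type estimates, giving $\sum|\text{coef}|\le(1+\sqrt2)^r$. Substituting $x=2p-1$ then multiplies the $\ell_1$ norm by at most $3^r$. This yields a constant around $7.24<7.5$, still under $8$, which is presumably the source of the stated conservative value $B_0=7.5$. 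The only other point to check is that the bound holds uniformly for all $n\ge d$. This is automatic, because the monomial coefficients $\binom kj/\binom nj$ lie in $[0,1]$ for every $n\ge j$.
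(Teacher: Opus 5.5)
Your proposal is correct and follows the paper's proof: Chebyshev expansion with $|c_r|\le 2$, passage to the power basis in $p$, and the identity $p^j=\sum_k \binom kj/\binom nj\, b_{k,n}(p)$ with ratios in $[0,1]$. In fact your fallback is exactly the paper's argument (the recurrence $T_{j+1}=2zT_j-T_{j-1}$ bounds the absolute monomial-coefficient sum of $T_j$ by $C(1+\sqrt2)^j$, the substitution $z=2p-1$ costs a further $3^j$, and this gives $B_0=7.5>3(1+\sqrt2)$), while your main route evaluates the power-basis $\ell_1$ norm of $T_r(2p-1)$ exactly as $T_r(3)$ via real-rootedness in $(0,1)$ and sign alternation, which is valid and sharpens the constant to $3+2\sqrt2=(1+\sqrt2)^2\approx 5.83$.
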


Combining this lemma with the localized Jackson polynomial gives
\begin{equation}
\label{eq:B0-upper}
\calR^{\mathrm{dec}}_{Q,n}(\beta,L)
\lesssim
\inf_{1\le d\le n}
\left\{
 d^{-\beta}+\frac{B_0^d}{\sqrt Q}
\right\}.
\end{equation}
This already proves that the $n^{-\beta}$ identification floor is constructively attainable once $Q$ is exponential in $n$.

\subsection{The Bernstein--Durrmeyer spectrum and estimator stability}
\label{sec:durrmeyer}

The preceding coefficient bound $A(P)\lesssim B_0^d$ is uniform over all trial budgets $n\ge d$; thus $n$ enters only by restricting the admissible degree to $d\le n$.  A sharper bound on $A(P)$, depending jointly on $(d,n)$, follows from the spectral properties of the operator implicit in~\eqref{eq:bernstein-conditional-identity}.  More precisely, introduce the auxiliary reference experiment
\[
U\sim\operatorname{Unif}[0,1],
\qquad
S\mid U=u\sim\Bin(n,u).
\]
The marginal law of $S$ is uniform on $\{0,\dots,n\}$, since
$\Pp(S=k)=\int_0^1b_{k,n}(u)\,\dd u=1/(n+1)$.  The uniform law of $U$ is a reference measure for the operator calculation, not an assumption on the unknown mixing distribution $G$.
Accordingly, consider $\mathbb R^{n+1}$ with normalized inner product, i.e.,
\[
\mathsf H_n
:=
L_2(\{0,\dots,n\},\nu_n),
\qquad
\inner{a}{b}_{\nu_n}
:=
\frac1{n+1}\sum_{k=0}^na_kb_k,
\]
where $\nu_n$ is the uniform distribution on  $\{0,\dots,n\}$.  The conditional-expectation operator
\begin{equation}
\label{eq:Tn-conditional}
T_n:\mathsf H_n\longrightarrow L_2[0,1],
\qquad
(T_na)(u)
=
\E[a_S\mid U=u]
=
\sum_{k=0}^na_kb_{k,n}(u)
\end{equation}
has range $\Pi_n$.  Its adjoint reverses the conditioning:
\begin{equation}
\label{eq:Tn-adjoint-conditional}
T_n^*:L_2[0,1]\longrightarrow\mathsf H_n,
\qquad
(T_n^*f)(k)
=
\E[f(U)\mid S=k]
=
(n+1)\int_0^1f(u)b_{k,n}(u)\,\dd u,
\end{equation}
where $U\mid S=k\sim\operatorname{Beta}(k+1,n-k+1)$.  Thus
\[
D_n := T_n T_n^*
\]
is the classical Bernstein--Durrmeyer operator, with
$
(D_nf)(u)
=
\E\!\left[\E[f(U)\mid S]\mid U=u\right].
$
Equivalently, $D_n$ is the Markov smoothing operator obtained by drawing $S\sim\Bin(n,u)$ and then $U'\sim\operatorname{Beta}(S+1,n-S+1)$; its value at $u$ is $\E[f(U')\mid U=u]$.  It is a positive, self-adjoint operator of rank $n+1$, with range $\Pi_n$.

Let $L_0,L_1,\dots$ be the orthonormal shifted Legendre polynomials, obtained by orthogonalizing $1,u,u^2,\dots$ in $L_2[0,1]$; they turn out to be the eigenfunctions of $D_n$ \citep{DerriennicDurrmeyer}.
More precisely:
\begin{proposition}[Durrmeyer spectral decomposition]
\label{prop:durrmeyer}
For $0\le j\le n$,
\begin{equation}
\label{eq:durrmeyer-eigen}
D_nL_j=\lambda_{j,n}L_j,
\qquad
\lambda_{j,n}
=
\frac{n!(n+1)!}{(n-j)!(n+j+1)!}
=
\prod_{r=0}^{j-1}\frac{n-r}{n+r+2}.
\end{equation}
Define $v_j:=\lambda_{j,n}^{-1/2}T_n^*L_j$.  Then $v_0,\dots,v_n$ form an orthonormal basis of $\mathsf H_n$, and
\begin{equation}
\label{eq:durrmeyer-svd}
T_na
=
\sum_{j=0}^n
\sqrt{\lambda_{j,n}}\,
\inner{a}{v_j}_{\nu_n}L_j.
\end{equation}
\end{proposition}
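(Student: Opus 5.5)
The plan is to show that $D_n$ maps each polynomial space $\Pi_j$ (in $u$) into itself. The eigenfunction statement will then follow from self-adjointness, and the eigenvalue from a leading-coefficient computation.

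\emph{Degree preservation.} Write $(D_nf)(u)=\sum_{k=0}^n b_{k,n}(u)\,(n+1)\int_0^1 f(v)b_{k,n}(v)\,\dd v$. For $f(v)=v^j$ with $j\le n$, the Beta integral gives
\[
(n+1)\int_0^1 v^j b_{k,n}(v)\,\dd v=\frac{(n+1)!}{(n+j+1)!}\,\frac{(k+j)!}{k!}.
\]
As a function of $k$, $(k+j)!/k!=(k+1)\cdots(k+j)$ is a polynomial of degree $j$ with leading term $k^j$. Expand it in falling factorials $k^{\underline{i}}=k(k-1)\cdots(k-i+1)$. The Bernstein identity $\sum_k k^{\underline{i}}\,b_{k,n}(u)=n^{\underline{i}}u^i$ then shows that $D_n(v^j)$ is a polynomial of degree at most $j$, with $u^j$-coefficient
\[
\frac{(n+1)!\,n!}{(n+j+1)!\,(n-j)!}.
\]
Hence $D_n\Pi_j\subseteq\Pi_j$ for every $j\le n$.

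\emph{Eigenfunctions and eigenvalues.} $D_n=T_nT_n^*$ is self-adjoint on $L_2[0,1]$. The Legendre polynomial $L_j$ lies in $\Pi_j$ and is orthogonal to $\Pi_{j-1}$. For any $g\in\Pi_{j-1}$,
\[
\inner{D_nL_j}{g}=\inner{L_j}{D_ng}=0,
\]
because $D_ng\in\Pi_{j-1}$. So $D_nL_j\in\Pi_j\cap\Pi_{j-1}^\perp=\operatorname{span}(L_j)$, and $L_j$ is an eigenfunction. Comparing $u^j$-coefficients, the eigenvalue is
\[
\lambda_{j,n}=\frac{n!\,(n+1)!}{(n-j)!\,(n+j+1)!}.
\]
Telescoping $\frac{n!}{(n-j)!}=\prod_{r=0}^{j-1}(n-r)$ and $\frac{(n+j+1)!}{(n+1)!}=\prod_{r=0}^{j-1}(n+r+2)$ gives the product form in \eqref{eq:durrmeyer-eigen}. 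In particular $\lambda_{j,n}>0$ for $j\le n$.

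\emph{SVD.} Set $v_j=\lambda_{j,n}^{-1/2}T_n^*L_j$. Then
\[
\inner{v_i}{v_j}_{\nu_n}=(\lambda_{i,n}\lambda_{j,n})^{-1/2}\inner{L_i}{D_nL_j}=\delta_{ij},
\]
so $v_0,\dots,v_n$ are $n+1$ orthonormal vectors in the $(n+1)$-dimensional space $\mathsf H_n$, hence a basis. Expand $a=\sum_j\inner{a}{v_j}_{\nu_n}v_j$. Since $T_nv_j=\lambda_{j,n}^{-1/2}D_nL_j=\sqrt{\lambda_{j,n}}\,L_j$, linearity gives \eqref{eq:durrmeyer-svd}.

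The main obstacle is the leading-coefficient bookkeeping in the first step, namely the falling-factorial expansion together with the Bernstein moment identity. The self-adjointness argument then avoids computing any lower-order terms.
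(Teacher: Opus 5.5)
Your proposal is correct and follows the paper's proof essentially step for step: degree preservation via the Beta integral together with the binomial falling-factorial moment identity (the same fact as the paper's ``leading coefficient $n!/(n-j)!$'' statement), the self-adjointness argument placing $D_nL_j$ in $\Pi_j\cap\Pi_{j-1}^\perp=\operatorname{span}(L_j)$, a leading-coefficient comparison for the eigenvalue, and the orthonormality computation $\inner{v_i}{v_j}_{\nu_n}=(\lambda_{i,n}\lambda_{j,n})^{-1/2}\inner{L_i}{D_nL_j}$ together with $T_nv_j=\sqrt{\lambda_{j,n}}\,L_j$ for the SVD. Your explicit remark that $\lambda_{j,n}>0$ for $j\le n$, which is needed for $v_j$ to be defined, makes explicit a small detail that the paper leaves implicit.
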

The eigenvalue formula is classical; \cref{app:durrmeyer} gives the short proof, which relies on degree preservation---$\Pi_j$ is an invariant subspace of $D_n$---and orthogonality of $L_j$ to $\Pi_{j-1}$.  Equation~\eqref{eq:durrmeyer-svd} is the singular value decomposition (SVD) of $T_n$.
The singular values $\sqrt{\lambda_{j,n}}$ quantify how strongly the binomial channel attenuates polynomial mode $j$, while $\kappa_{j,n}=\lambda_{j,n}^{-1/2}$ is the corresponding inverse amplification.

The SVD of $T_n$ can be used to control $a(P)$.  We continue to write $\Pi_d$ for the vector space of polynomials of degree at most $d$; the ambient interval will be clear from the argument.  The affine change of variables below identifies its realizations on $[-1,1]$ and $[0,1]$.  For $P$ on $[-1,1]$, define
\begin{equation}
\label{eq:polynomial-affine-transform}
\widetilde P(u):=P(2u-1),
\end{equation}
which is a polynomial on $[0,1]$ of the same degree.
Then the degree-$n$ Bernstein coefficient vector introduced in \eqref{eq:bernstein-representation} is characterized by
\[
T_na(P)=\widetilde P.
\]
Every such polynomial $\widetilde P$ of degree at most $d$ has a unique expansion in $L_0,\dots,L_d$.

\begin{proposition}\label{prop:coefficient-spectral}
Let $P\in\Pi_d$, $d\le n$, and write its shifted-Legendre expansion as
$\widetilde P=\sum_{j=0}^d c_jL_j$.
Then its degree-$n$ Bernstein coefficient vector satisfies
\begin{equation}
\label{eq:coefficient-spectral}
\frac1{n+1}\sum_{k=0}^n a_k(P)^2
=
\sum_{j=0}^d\frac{c_j^2}{\lambda_{j,n}}.
\end{equation}
In particular, since $\kappa_{d,n}:=\lambda_{d,n}^{-1/2}$, one has
\begin{equation}
\label{eq:range-spectral}
A(P)
\le
2\sqrt{n+1}\,\kappa_{d,n}\norm{\widetilde P}_{L_2[0,1]}.
\end{equation}
\end{proposition}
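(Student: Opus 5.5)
The plan is to read off the coefficient norm from the singular value decomposition in \cref{prop:durrmeyer}. Since $T_n a(P)=\widetilde P$ and $T_n$ is injective on $\mathsf H_n$ (its range $\Pi_n$ has dimension $n+1$), the vector $a(P)$ is the unique preimage of $\widetilde P$. Expand $a(P)=\sum_{j=0}^n \alpha_j v_j$ in the orthonormal basis $v_0,\dots,v_n$ of $\mathsf H_n$. Then \eqref{eq:durrmeyer-svd} gives $T_n a(P)=\sum_j\sqrt{\lambda_{j,n}}\,\alpha_j L_j$. Comparing with $\widetilde P=\sum_{j\le d}c_jL_j$, and using that the $L_j$ are orthonormal (hence linearly independent) in $L_2[0,1]$, we get $\alpha_j=c_j/\sqrt{\lambda_{j,n}}$ for $j\le d$ and $\alpha_j=0$ for $j>d$. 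Here all $\lambda_{j,n}>0$ for $j\le n$ by the product formula in \eqref{eq:durrmeyer-eigen}. Parseval in $\mathsf H_n$ then gives $\norm{a(P)}_{\nu_n}^2=\sum_{j\le d}c_j^2/\lambda_{j,n}$, which is exactly \eqref{eq:coefficient-spectral}, since the normalized inner product yields $\frac1{n+1}\sum_k a_k^2$.

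For \eqref{eq:range-spectral}, first note that $\lambda_{j,n}$ is nonincreasing in $j$, because each factor $(n-r)/(n+r+2)<1$. Hence $1/\lambda_{j,n}\le 1/\lambda_{d,n}=\kappa_{d,n}^2$ for all $j\le d$. Combining this with the identity just proved and with Parseval in $L_2[0,1]$, $\sum_j c_j^2=\norm{\widetilde P}_{L_2[0,1]}^2$, gives
\[
\sum_{k=0}^n a_k(P)^2\le (n+1)\kappa_{d,n}^2\norm{\widetilde P}_{L_2[0,1]}^2 .
\]
Finally, bound the spread crudely by $A(P)\le 2\max_k|a_k(P)|\le 2\bigl(\sum_k a_k(P)^2\bigr)^{1/2}$, which yields \eqref{eq:range-spectral}.

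There is no real obstacle here. The only point needing care is bookkeeping: we must use the $\nu_n$-normalized inner product consistently, since that is where the $\sqrt{n+1}$ factor enters, and we must justify the coefficient matching through uniqueness of the preimage, i.e. injectivity of $T_n$ on the $(n+1)$-dimensional space $\mathsf H_n$.
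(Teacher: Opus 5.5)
Your proposal is correct and follows the paper's proof essentially step for step: you read off $a(P)=\sum_{j\le d}(c_j/\sqrt{\lambda_{j,n}})\,v_j$ from the singular value decomposition and apply Parseval in $\mathsf H_n$. You then use monotonicity of $\lambda_{j,n}$, Parseval in $L_2[0,1]$, and the bound $\max_k|a_k(P)|\le\sqrt{n+1}\,\norm{a(P)}_{\nu_n}$. Your remark about injectivity of $T_n$ is harmless but not needed, since the coefficient matching only uses linear independence of the $L_j$.
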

Identity~\eqref{eq:coefficient-spectral} is Parseval's identity in $\mathsf H_n$: although a degree-$d$ polynomial may have all $n+1$ degree-$n$ Bernstein coefficients nonzero, its coefficient vector has only $d+1$ nonzero coordinates in the singular basis $v_0,\dots,v_n$.

The inverse singular values have two useful asymptotic regimes.  For $d\le n/2$,
\begin{equation}
\label{eq:kappa-asymp}
 c\frac{d(d+1)}{n}
\le
\log\kappa_{d,n}
\le
C\frac{d(d+1)}{n},
\end{equation}
where $c,C>0$ are numerical constants.  Thus
\[
\kappa_{d,n}
=
\exp\{\Theta(d^2/n)\}
\qquad(d\ll n),
\]
while $\kappa_{n,n}\asymp2^nn^{-1/4}$.  The elementary calculation is given in \cref{app:durrmeyer}.

Applying \eqref{eq:range-spectral} to the bounded Jackson
polynomial
gives
the
refinement
\begin{equation}
\label{eq:spectral-upper}
\calR^{\mathrm{dec}}_{Q,n}(\beta,L)
\lesssim
\inf_{1\le d\le n}
\left\{
 d^{-\beta}
+
\frac{\min\{B_0^d,\sqrt{n+1}\,\kappa_{d,n}\}}{\sqrt Q}
\right\}.
\end{equation}

The improvement is present even at full degree: the endpoint asymptotic above gives the spectral stability factor $\sqrt{n+1}\,\kappa_{n,n}\asymp2^nn^{1/4}$, rather than $B_0^n$.  For smaller $d$, \eqref{eq:kappa-asymp} captures how stability varies jointly with $d$ and $n$.  Taking $d=n$, the moment estimator reaches risk $O(n^{-\beta})$ whenever
\begin{equation}
\label{eq:large-Q-threshold}
Q
\gtrsim
4^n n^{2\beta+1/2}.
\end{equation}
Combined with \cref{thm:identification-floor}, this gives a second
endpoint regime.

\begin{corollary}[Identification-limited regime]
\label{cor:identification-limited}
There is $C=C(\beta,L)$ such that, if \eqref{eq:large-Q-threshold} holds with a sufficiently large constant, then
\[
\calR^{\mathrm{dec}}_{Q,n}(\beta,L)
\asymp_{\beta,L}
n^{-\beta}.
\]
\end{corollary}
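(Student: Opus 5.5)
The proof combines an upper bound at full degree $d=n$ with the identification-floor lower bound, so the argument has two halves.

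\emph{Lower bound.} \cref{thm:identification-floor}, specifically \eqref{eq:N-only-lower}, already gives $\calR^{\mathrm{dec}}_{Q,n}(\beta,L)\ge c_{\beta,L}n^{-\beta}$ for every $Q$. Nothing further is needed on this side.

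\emph{Upper bound.} I would use the spectral refinement \eqref{eq:spectral-upper} and take $d=n$ in the infimum. Concretely, let $P_n$ be the localized Jackson polynomial of degree $n$. It satisfies $\norm{P_n}_\infty\le1$ and $|P_n(z)-\sgn(z)|\lesssim_s\min\{1,(n|z|)^{-s}\}$ with $s>\beta$. \cref{prop:bias-stability} then splits the risk into a bias term and a fluctuation term.

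\emph{Bias term.} The bias is $\int|P_n-\sgn|\,\dd G$. Using the layer-cake formula with $G([-t,t])\le Lt^\beta$, I would integrate the pointwise bound over dyadic shells $2^{k-1}/n<|z|\le 2^k/n$. This gives $\lesssim_{\beta,L}n^{-\beta}\sum_k 2^{k\beta-ks}$, and the sum converges because $s>\beta$. This is the same computation already used for the upper half of \cref{thm:identification-floor}.

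\emph{Fluctuation term.} \cref{prop:coefficient-spectral} gives $A(P_n)\le 2\sqrt{n+1}\,\kappa_{n,n}\norm{\widetilde P_n}_{L_2[0,1]}$. Since $\norm{P_n}_\infty\le1$, we have $\norm{\widetilde P_n}_{L_2[0,1]}\le 1$. It remains to evaluate $\kappa_{n,n}$. From \eqref{eq:durrmeyer-eigen},
\[
\lambda_{n,n}=\frac{n!(n+1)!}{(2n+1)!}=\frac{1}{(2n+1)\binom{2n}{n}},
\]
and by Stirling $\binom{2n}{n}\asymp 4^n/\sqrt{\pi n}$. Hence $\kappa_{n,n}=\lambda_{n,n}^{-1/2}\asymp 2^n n^{-1/4}$, so $\sqrt{n+1}\,\kappa_{n,n}\asymp 2^n n^{1/4}$. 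The fluctuation term is therefore $\lesssim 2^n n^{1/4}Q^{-1/2}$.

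\emph{Combining.} Requiring $2^nn^{1/4}Q^{-1/2}\le n^{-\beta}$ is equivalent to $Q\ge 4^n n^{2\beta+1/2}$, which is \eqref{eq:large-Q-threshold}. If the threshold holds with a large constant $C'$, the fluctuation term is at most $(C')^{-1/2}n^{-\beta}$ times a universal constant. Thus $\calR^{\mathrm{abs}}_{Q,n}\lesssim_{\beta,L}n^{-\beta}$, and \eqref{eq:dec-le-abs} transfers this to $\calR^{\mathrm{dec}}_{Q,n}$. Together with the lower bound, this gives $\calR^{\mathrm{dec}}_{Q,n}(\beta,L)\asymp_{\beta,L}n^{-\beta}$.

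The only step that is not pure bookkeeping is the Stirling evaluation of $\kappa_{n,n}$, and the ratio formula makes it routine. One subtlety is that the Jackson polynomial must have sup norm at most $1$ so that the $L_2$ factor is bounded by $1$; its construction provides this. The constant only needs to be large enough to absorb the implicit constants, and even the unmodified threshold already yields $O(n^{-\beta})$.
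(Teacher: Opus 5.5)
Your argument is the same as the paper's, and it is correct apart from one arithmetic slip. As in the paper, the lower bound comes from \eqref{eq:N-only-lower}, and the upper bound is the $d=n$ case of \eqref{eq:spectral-upper}: the bounded Jackson polynomial, \cref{prop:bias-stability,prop:coefficient-spectral}, and $\sqrt{n+1}\,\kappa_{n,n}\asymp2^nn^{1/4}$.

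The slip is in $\lambda_{n,n}$. The correct value is $\lambda_{n,n}=\frac{n!(n+1)!}{(2n+1)!}=\binom{2n+1}{n}^{-1}=\frac{n+1}{(2n+1)\binom{2n}{n}}$ (for example, $\lambda_{1,1}=1/3$), not $\frac{1}{(2n+1)\binom{2n}{n}}$. Taken literally, your displayed formula would give $\kappa_{n,n}\asymp2^nn^{1/4}$ and a threshold larger by a factor of $n$. The correct value yields the $\kappa_{n,n}\asymp2^nn^{-1/4}$ that you state, and hence exactly \eqref{eq:large-Q-threshold}.
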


For intermediate degrees, \eqref{eq:spectral-upper} yields rates depending jointly on $Q$ and $n$.  We make the degree choice explicit in the next section, where the resulting upper bound can be compared directly with the spectral moment-cancellation lower bound.

\section{Spectral lower bounds and the middle regime}
\label{sec:spectral-middle}

The preceding section used the Durrmeyer spectrum to control the stability of the polynomial-moment estimator.  We now use its singular decomposition to quantify how difficult it is for the binomial channel to distinguish moment-matched alternatives.  The resulting lower bound can be compared directly with the spectral upper bound, giving explicit rates and a matching middle regime.

\subsection{Spectral lower bound}

The construction reuses the dual family from \cref{thm:identification-floor,lem:duality-moment-pair}.  At cancellation degree $d$, the two alternatives have the same moments up to degree $d$,
while their values of $\theta$ differ by order $d^{-\beta}$.  In the shifted-Legendre basis, this cancellation removes modes $0,\ldots,d$ from the observable perturbation.  Every surviving mode $j\ge d+1$ is multiplied by a singular value no larger than $\sqrt{\lambda_{d+1,n}}=\kappa_{d+1,n}^{-1}$.  When $d=n$, no observable mode remains and the count laws coincide, recovering the identification floor.  For $d<n$, the following theorem quantifies the residual distinguishability.

\begin{theorem}[Spectral moment-cancellation lower bound]
\label{thm:spectral-modulus-lower}
Fix $0<\beta\le1$ and $L\ge1$.  There are constants $0<c_{\beta,L}<C_{\beta,L}<\infty$ such that, for every $n\ge2$ and every integer $1\le d\le n-1$,
\begin{equation}
\label{eq:spectral-modulus-lower}
\omega_{n,\beta,L}\!\left(
C_{\beta,L}\frac{n+1}{\kappa_{d+1,n}}
\right)
\ge
c_{\beta,L}d^{-\beta}.
\end{equation}
If $1/2<\beta\le1$, the following stronger bound holds:
\begin{equation}
\label{eq:spectral-modulus-lower-L2}
\omega_{n,\beta,L}\!\left(
\frac{C_{\beta,L}}{\kappa_{d+1,n}}
\right)
\ge
c_{\beta,L}d^{-\beta}.
\end{equation}
\end{theorem}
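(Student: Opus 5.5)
We will use the dual moment-cancellation pair from \cref{lem:duality-moment-pair} with cancellation degree $m=d$:
\[
\dd G_{d,\pm}=(1\pm h_d)\,\dd\mu_\beta ,
\]
mixed, as in \cref{thm:identification-floor}, with a common symmetric two-atom law on $\{-1,+1\}$. This keeps both measures in $\calG_{\beta,L}$. Here $h_d$ is odd, satisfies $\norm{h_d}_\infty\le1$, annihilates $\Pi_d$, and attains $E_d(\beta)$. Hence
\[
|\theta(G_{d,+})-\theta(G_{d,-})|\gtrsim E_d(\beta)\asymp_\beta d^{-\beta}
\]
by \eqref{eq:weighted-L1}. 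It then suffices to bound $h(K_nG_{d,+},K_nG_{d,-})$ by the stated radius. Monotonicity of the modulus then gives the claim.

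\textbf{Hellinger reduction.} Write the difference of the count laws as the vector
\[
\Delta_k=2\int b_{k,n}\!\left(\tfrac{1+z}{2}\right)h_d(z)\,\dd\mu_\beta(z).
\]
After the change of variable $u=(1+z)/2$, this is $\Delta=\tfrac{2}{n+1}\,T_n^*f$, where $f(u)=h_d(2u-1)\,\rho(u)$ and $\rho$ is the density of the pushed-forward $\mu_\beta$ on $[0,1]$, proportional to $|2u-1|^{\beta-1}$. We use $h^2(p,q)\le\sum_k(p_k-q_k)^2/(p_k+q_k)$. Each count law dominates a fixed fraction of the common two-atom component together with the $\mu_\beta$ part, and the $\mu_\beta$ part satisfies $(K_n\mu_\beta)(k)\gtrsim_\beta 1/(n+1)$ uniformly in $k$ (a Beta-integral computation). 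This gives
\[
h^2\lesssim (n+1)\sum_k\Delta_k^2=(n+1)^2\norm{\Delta}_{\nu_n}^2\cdot\frac{1}{n+1}\cdot(n+1).
\]
The exact power of $(n+1)$ has to be tracked carefully; in the end we need $h\lesssim\norm{T_n^*f}_{\nu_n}$ up to the polynomial factor in $n$ that appears in the statement.

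\textbf{Spectral step.} Because $h_d$ annihilates $\Pi_d$ under $\mu_\beta$, the function $f$ is $L_2[0,1]$-orthogonal to $L_0,\dots,L_d$. By \cref{prop:durrmeyer},
\[
\norm{T_n^*f}_{\nu_n}^2=\sum_{j>d}\lambda_{j,n}\langle f,L_j\rangle^2\le\lambda_{d+1,n}\norm{f}_{L_2}^2=\kappa_{d+1,n}^{-2}\norm{f}_{L_2}^2 .
\]

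\textbf{Main obstacle: $f\in L_2$.} Here $|f|\le\rho\propto|2u-1|^{\beta-1}$, which is square integrable exactly when $\beta>1/2$. That case yields \eqref{eq:spectral-modulus-lower-L2} with no $n$-dependent loss, provided the Hellinger-to-$\chi^2$ step is done in the normalized $\nu_n$ geometry.

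For $\beta\le1/2$ we have $f\notin L_2$, and I would fall back on a cruder route. One option is to split off $\rho$ on a neighborhood of $1/2$ of width about $1/n$. Another is to bound $|\Delta_k|\le\tfrac{2}{n+1}\norm{T_n^*f}_\infty$ and pass between the $\ell_\infty$, $\ell_2$ and Hellinger norms on $n+1$ points. This costs polynomial factors, accounting for the $(n+1)$ in \eqref{eq:spectral-modulus-lower}. A cleaner alternative would be to replace $\mu_\beta$ by a measure with density bounded by $L|z|^{\beta-1}$ but truncated near zero at scale $d^{-1}$. This keeps $E_d\asymp d^{-\beta}$ and makes $\norm{f}_{L_2}$ at most polynomial in $d\le n$. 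Verifying that this truncation preserves the inverse-theorem lower bound is the delicate part. I expect the weight-singularity handling, rather than the spectral inequality, to be where most of the work lies.
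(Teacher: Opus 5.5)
Your construction and reductions are the paper's: the degree-$d$ margin-adjusted pair of \cref{lem:duality-moment-pair}, the bound $h^2\le\sum_k(p_k-q_k)^2/(p_k+q_k)$ with the floor $q_0(k)\ge\beta/(n+1)$, and removal of the modes $j\le d$ by cancellation. The floor is immediate, because the pushed-forward density $\beta|2u-1|^{\beta-1}$ is at least $\beta$ when $\beta\le1$. For $1/2<\beta\le1$ your argument proves \eqref{eq:spectral-modulus-lower-L2}, and hence \eqref{eq:spectral-modulus-lower} by monotonicity of the modulus. Your own display already settles the bookkeeping you flagged: $(n+1)\sum_k\Delta_k^2=(n+1)^2\norm{\Delta}_{\nu_n}^2=4\norm{T_n^*f}_{\nu_n}^2$. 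So the Hellinger step costs no power of $n$, and the factor $n+1$ in \eqref{eq:spectral-modulus-lower} must come from the spectral step.

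The gap is the range $0<\beta\le1/2$. \eqref{eq:spectral-modulus-lower} must cover it, and you give only unexecuted fallbacks there. The missing observation is that square integrability is needed only for the final Bessel inequality, not for the spectral identity itself.

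\begin{itemize}
\item The vector $(T_n^*f)(k)=(n+1)\int_0^1 f\,b_{k,n}$ is defined for every $f\in L_1[0,1]$.
\item Because $T_nv_j=\sqrt{\lambda_{j,n}}L_j$ is a bounded polynomial, $\langle T_n^*f,v_j\rangle_{\nu_n}=\int_0^1 f\,T_nv_j=\sqrt{\lambda_{j,n}}\int_0^1 fL_j$.
\item Parseval in the finite-dimensional space $\mathsf H_n$, together with your orthogonality $f\perp L_0,\dots,L_d$, gives $\norm{T_n^*f}_{\nu_n}^2=\sum_{j=d+1}^n\lambda_{j,n}\langle f,L_j\rangle^2$ with no $L_2$ hypothesis. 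This is \cref{lem:adjoint-tail}.
\item Bound each coefficient crudely: $|\langle f,L_j\rangle|\le\norm{f}_{L_1}\norm{L_j}_\infty\le\sqrt{2j+1}$, using $|h_d|\le1$ and that $\rho$ is a probability density.
\item Summing, $\sum_{j=0}^n(2j+1)=(n+1)^2$ gives $\norm{T_n^*f}_{\nu_n}\le(n+1)/\kappa_{d+1,n}$. This is exactly where the $n+1$ comes from.
\end{itemize}

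Your splitting fallback cannot replace this step. Once $\rho$ is cut at width $\epsilon$ around $u=1/2$, neither piece is orthogonal to $\Pi_d$. The near piece's low modes are then damped only polynomially in $\epsilon$, not by $\lambda_{d+1,n}$. When $d^2/n\gg\log n$, this forces $\epsilon$ to be exponentially small. But then $\norm{f_{\mathrm{far}}}_{L_2}$, of order $\epsilon^{\beta-1/2}$, is exponentially large for $\beta<1/2$, and you lose a positive power of $\kappa_{d+1,n}$.

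The truncated-measure route would need a new lower bound $E_d\gtrsim d^{-\beta}$ for a $d$-dependent weight. The fixed-weight inverse theorem does not directly supply this, and the $L_1$ identity above makes the detour unnecessary.
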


The factor $n+1$ in \eqref{eq:spectral-modulus-lower} comes from controlling the spectral tail of an integrable, potentially non-$L_2$, perturbation using supremum norms of the shifted Legendre polynomials.  When $\beta>1/2$, the perturbation is square-integrable and Bessel's inequality removes this factor, giving \eqref{eq:spectral-modulus-lower-L2}.  A complete proof is given in \cref{app:spectral-lower}.

\subsection{Explicit rates and the middle regime}

The upper and lower spectral results use \eqref{eq:kappa-asymp} in opposite directions.  To display this comparison, define
\begin{equation}
\label{eq:spectral-log-scales}
x_{Q,n}
:=
\log\!\left(\frac{Q}{n^{\beta+1}}\right),
\qquad
y_{Q,n}
:=
\log\!\bigl(e(n+1)\sqrt Q\bigr),
\qquad
y_Q^{(2)}
:=
\log(e\sqrt Q).
\end{equation}
The quantity $x_{Q,n}$ measures the logarithmic question budget remaining in the upper bound after accounting for its polynomial prefactor.  The quantity $y_{Q,n}$ is the spectral attenuation required to place the lower-bound alternatives within Hellinger resolution $Q^{-1/2}$; $y_Q^{(2)}$ is its square-integrable refinement.

\begin{proposition}[Explicit spectral rate bounds]
\label{prop:explicit-spectral-rates}
Fix $\beta>0$ and $L\ge1$.  There are constants $c_0,c,C>0$ and $n_0\ge2$, depending only on $(\beta,L)$, such that, for every $n\ge n_0$, the following statements hold.
\begin{enumerate}[label=(\alph*)]
\item If $0<\beta\le1$ and $y_{Q,n}\le c_0n$, then
\begin{equation}
\label{eq:spectral-explicit-lower}
\calR^{\mathrm{dec}}_{Q,n}(\beta,L)
\ge
c\,(ny_{Q,n})^{-\beta/2}.
\end{equation}
If $1/2<\beta\le1$, the same assertion holds with $y_Q^{(2)}$ in place of $y_{Q,n}$, both in the condition and in the bound.

\item If $1\le x_{Q,n}\le c_0n$, then
\begin{equation}
\label{eq:intermediate-upper}
\calR^{\mathrm{dec}}_{Q,n}(\beta,L)
\le
C\,(nx_{Q,n})^{-\beta/2}.
\end{equation}
The bound is achieved by the polynomial-moment estimator with degree $d\asymp\sqrt{nx_{Q,n}}$.
\end{enumerate}
\end{proposition}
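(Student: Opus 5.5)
Both parts reduce to choosing the cancellation/approximation degree $d$ in results already established, and then converting $\kappa$ into exponentials through \eqref{eq:kappa-asymp}.

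\emph{Part (a).} Combine \cref{thm:scalar-modulus} with \cref{thm:spectral-modulus-lower}. By \cref{thm:scalar-modulus}, $\calR^{\mathrm{dec}}_{Q,n}\ge c\,\omega_{n,\beta,L}(Q^{-1/2})$. Since the modulus is nondecreasing in its radius, it suffices to find an integer $d\le n-1$ with
\[
C_{\beta,L}(n+1)/\kappa_{d+1,n}\le Q^{-1/2},
\qquad\text{equivalently}\qquad
\log\kappa_{d+1,n}\ge \log C_{\beta,L}+y_{Q,n}-1.
\]
For $d+1\le n/2$, the lower half of \eqref{eq:kappa-asymp} gives $\log\kappa_{d+1,n}\ge c(d+1)(d+2)/n$. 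It is therefore enough to take $d+1=\lceil\sqrt{A\,n\,y_{Q,n}}\,\rceil$ for a constant $A=A(\beta,L)$ large enough to absorb $\log C_{\beta,L}$; here we use $y_{Q,n}\ge1$. The condition $y_{Q,n}\le c_0n$ with $c_0$ small guarantees $d+1\le n/2$, so \eqref{eq:kappa-asymp} applies. Enlarging $n_0$ ensures $d\ge1$. Then \eqref{eq:spectral-modulus-lower} yields
\[
\omega\ge c_{\beta,L}\,d^{-\beta}\gtrsim (n y_{Q,n})^{-\beta/2}.
\]
For $1/2<\beta\le1$, repeat the argument with \eqref{eq:spectral-modulus-lower-L2}. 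The prefactor $n+1$ disappears, so the requirement becomes $\log\kappa_{d+1,n}\gtrsim y_Q^{(2)}$.

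\emph{Part (b).} Use \eqref{eq:spectral-upper}: the Jackson polynomial of degree $d$ fed into \cref{prop:bias-stability}, with the range bound \eqref{eq:range-spectral}. This gives, up to constants, the risk bound
\[
d^{-\beta}+\sqrt{n+1}\,\kappa_{d,n}\,Q^{-1/2}.
\]
Choose $d=\lfloor\sqrt{a\,n\,x_{Q,n}}\rfloor$ with a small constant $a$. The upper half of \eqref{eq:kappa-asymp} (valid since $d\le n/2$ when $c_0$ is small) gives $\kappa_{d,n}\le\exp(2Ca\,x_{Q,n})$. The stability term is then at most
\[
\sqrt{n+1}\,Q^{-1/2}\,(Q/n^{\beta+1})^{2Ca}.
\]
With $2Ca\le1/4$ and $Q=n^{\beta+1}e^{x}$, this becomes
\[
\lesssim n^{1/2}\,n^{-(\beta+1)/2}\,e^{-x/4}=n^{-\beta/2}e^{-x/4}.
\]
This is $\lesssim (nx)^{-\beta/2}$ because $x^{\beta/2}e^{-x/4}$ is bounded for $x\ge1$. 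The bias term is $d^{-\beta}\asymp(nx_{Q,n})^{-\beta/2}$ once $d\ge1$, which $x\ge1$ and $n\ge n_0$ guarantee.

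\textbf{Main obstacle.} The spectral, Jackson and moment-matching inputs are all already available, so the delicate part is purely bookkeeping. One must check that the chosen $d$ stays in the range $d\le n/2$ where the two-sided $\kappa$ asymptotics hold, that the constants $c_0$ and $n_0$ depend only on $(\beta,L)$, and that the polynomial factor $\sqrt{n+1}$ is exactly absorbed by the $n^{\beta+1}$ normalization in $x_{Q,n}$.
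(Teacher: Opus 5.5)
Your proposal is correct and follows essentially the same route as the paper. For (a), you take $d+1\asymp\sqrt{n\,y_{Q,n}}$ with a large constant, so that the lower half of \eqref{eq:kappa-asymp} pushes the radius in \cref{thm:spectral-modulus-lower} below $Q^{-1/2}$, and then invoke \cref{thm:scalar-modulus} and monotonicity of the modulus. For (b), you take $d\asymp\sqrt{n\,x_{Q,n}}$ with a small constant, so that $\kappa_{d,n}\le e^{x_{Q,n}/4}$ in \eqref{eq:spectral-upper}. The constant bookkeeping also matches the paper's: $y_{Q,n}\ge1$ absorbs $\log C_{\beta,L}$, $c_0$ is small enough to keep $d\le n/2$, and $n_0$ is large enough to keep $d\ge1$.
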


The degree choices make the comparison transparent.  For the upper bound, take $d_{\mathrm{up}}\asymp\sqrt{nx_{Q,n}}$ with a sufficiently small proportionality constant.  The upper half of \eqref{eq:kappa-asymp} then gives $\kappa_{d_{\mathrm{up}},n}\le e^{x_{Q,n}/4}$, and the stochastic term in \eqref{eq:spectral-upper} satisfies
\[
\frac{\sqrt{n+1}\,\kappa_{d_{\mathrm{up}},n}}{\sqrt Q}
\lesssim
n^{-\beta/2}e^{-x_{Q,n}/4}
\lesssim_\beta
(nx_{Q,n})^{-\beta/2}.
\]
For the lower bound, take $d_{\mathrm{low}}\asymp\sqrt{ny_{Q,n}}$ with a sufficiently large proportionality constant.  The lower half of \eqref{eq:kappa-asymp} then makes $\kappa_{d_{\mathrm{low}}+1,n}$ at least a fixed multiple of $(n+1)\sqrt Q$.  Thus \eqref{eq:spectral-modulus-lower} places alternatives separated by order $d_{\mathrm{low}}^{-\beta}$ within Hellinger radius $Q^{-1/2}$.  The version using $y_Q^{(2)}$ follows identically from \eqref{eq:spectral-modulus-lower-L2}.  The complete calculations, including integer rounding and constants, are given in \cref{app:spectral-lower}.

When the two logarithmic scales $x_{Q,n}$ and $y_{Q,n}$ are comparable, the preceding upper and lower bounds match.  This gives the following rigorous middle regime.

\begin{theorem}[A rigorous middle regime]
\label{thm:rigorous-middle}
Fix $0<\beta\le1$, $L\ge1$, and $\eta>0$.  There are constants $0<c_{\beta,L,\eta}<C_{\beta,L,\eta}<\infty$, $c_\eta>0$, and $n_0$ such that, for every $n\ge n_0$ and
\begin{equation}
\label{eq:rigorous-middle-range}
n^{\beta+1+\eta}
\le Q
\le e^{c_\eta n},
\end{equation}
\begin{equation}
\label{eq:rigorous-middle}
c_{\beta,L,\eta}
[n\log Q]^{-\beta/2}
\le
\calR^{\mathrm{dec}}_{Q,n}(\beta,L)
\le
C_{\beta,L,\eta}
[n\log Q]^{-\beta/2}.
\end{equation}
The upper bound is attained by the pooled polynomial-moment estimator with degree $d\asymp\sqrt{n\log Q}$.
\end{theorem}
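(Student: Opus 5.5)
The plan is to derive the theorem directly from \cref{prop:explicit-spectral-rates}, checking that inside the range \eqref{eq:rigorous-middle-range} the logarithmic scales $x_{Q,n}$ and $y_{Q,n}$ are both comparable to $\log Q$, with constants depending only on $(\beta,\eta)$. Once this holds, part (a) gives the lower bound $c(ny_{Q,n})^{-\beta/2}\gtrsim(n\log Q)^{-\beta/2}$. Part (b) gives the upper bound $C(nx_{Q,n})^{-\beta/2}\lesssim(n\log Q)^{-\beta/2}$, attained by the pooled moment estimator with $d\asymp\sqrt{nx_{Q,n}}\asymp\sqrt{n\log Q}$.

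For the upper side, I first check the hypothesis $1\le x_{Q,n}\le c_0 n$. The lower end of \eqref{eq:rigorous-middle-range} gives $\log Q\ge(\beta+1+\eta)\log n$. Hence
\[
x_{Q,n}=\log Q-(\beta+1)\log n\ge\frac{\eta}{\beta+1+\eta}\log Q.
\]
This is at least $1$ once $n\ge n_0(\beta,\eta)$, since $\log Q\ge(\beta+1+\eta)\log n\to\infty$. Trivially $x_{Q,n}\le\log Q\le c_\eta n$, so choosing $c_\eta\le c_0$ secures the upper condition. Therefore $x_{Q,n}\asymp_{\beta,\eta}\log Q$.

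For the lower side, I need $y_{Q,n}\le c_0 n$. We have
\[
y_{Q,n}=1+\log(n+1)+\tfrac12\log Q\le 1+\log(n+1)+\tfrac12c_\eta n,
\]
which is at most $c_0n$ for $n\ge n_0$ after shrinking $c_\eta$ to $c_\eta\le c_0$ as well. In the other direction, $\log(n+1)\lesssim\log Q/(\beta+1+\eta)$ by the lower end of the range. Thus $y_{Q,n}\asymp\log Q$, with $y_{Q,n}\ge\frac12\log Q$ immediate. This is where $\beta\le1$ is needed, since part (a) is stated only for $0<\beta\le1$.

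The main obstacle, if any, is bookkeeping. I must pick $c_\eta$ small enough (below the $c_0$ of \cref{prop:explicit-spectral-rates}, and leaving room to absorb $1+\log(n+1)$) and $n_0$ large enough that both parts of the proposition apply simultaneously. I must also confirm that the range \eqref{eq:rigorous-middle-range} is nonempty, which holds for large $n$. The substantive work is already contained in \cref{thm:spectral-modulus-lower}, in \eqref{eq:kappa-asymp}, and in the degree choices behind \cref{prop:explicit-spectral-rates}. The constants $c_{\beta,L,\eta}$ and $C_{\beta,L,\eta}$ then absorb the factors $(\eta/(\beta+1+\eta))^{-\beta/2}$ and the comparability constants for $y_{Q,n}$.
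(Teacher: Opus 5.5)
Your proposal is correct and matches the paper's proof. The paper also derives the theorem directly from \cref{prop:explicit-spectral-rates}: the lower end of the range gives $x_{Q,n}\asymp_{\beta,\eta}y_{Q,n}\asymp_{\beta,\eta}\log Q$, and the upper end with $c_\eta$ small puts both scales in the admissible ranges of parts (a) and (b); your explicit checks ($x_{Q,n}\ge\eta\log n\ge1$ and $y_{Q,n}\le1+\log(n+1)+\tfrac12c_\eta n\le c_0n$) just fill in details the paper leaves implicit.
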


\begin{proof}
Under the lower inequality in \eqref{eq:rigorous-middle-range},
\[
x_{Q,n}
\asymp_{\beta,\eta}
y_{Q,n}
\asymp_{\beta,\eta}
\log Q.
\]
The upper inequality in \eqref{eq:rigorous-middle-range}, with $c_\eta$ sufficiently small, places both logarithmic scales in the ranges required by \cref{prop:explicit-spectral-rates}.  Combining its lower and upper assertions proves \eqref{eq:rigorous-middle}.
\end{proof}

\subsection{First transition and spectral conjecture}

The middle theorem deliberately stays away from the transition at the end of the question-limited regime.  Above that endpoint, the currently proved bounds include
\begin{equation}
\label{eq:first-transition-strip}
\max\left\{
Q^{-1/2},
(ny_{Q,n})^{-\beta/2}
\right\}
\lesssim_{\beta,L}
\calR^{\mathrm{dec}}_{Q,n}(\beta,L)
\lesssim_{\beta,L}
Q^{-1/2}+n^{-\beta/2},
\end{equation}
provided $0<\beta\le1$ and $y_{Q,n}\le c_0n$, with $c_0$ as in \cref{prop:explicit-spectral-rates}.  When $1/2<\beta\le1$, $y_{Q,n}$ may be replaced by $y_Q^{(2)}$.  For polynomial $Q$ with $n^\beta\ll Q\lesssim n^{\beta+1}$, this leaves at most a logarithmic gap between $(n\log n)^{-\beta/2}$ and $n^{-\beta/2}$ once the spectral term dominates the root-$Q$ term.

The exact spectrum suggests that the intrinsic inverse difficulty at degree $d$ is $\kappa_{d,n}$ itself.

\begin{conjecture}[Spectral minimax formula]
\label{conj:Psi}
For fixed $\beta,L$, the scalar minimax risk satisfies
\begin{equation}
\label{eq:Psi}
\calR^{\mathrm{dec}}_{Q,n}(\beta,L)
\asymp_{\beta,L}
\Psi_\beta(Q,n),
\qquad
\Psi_\beta(Q,n)
:=
\inf_{1\le d\le n}
\left\{
 d^{-\beta}
+
\frac{\kappa_{d,n}}{\sqrt Q}
\right\}.
\end{equation}
\end{conjecture}

Optimizing \eqref{eq:Psi} using \eqref{eq:kappa-asymp} gives the three-phase rate picture in \eqref{eq:intro-three-phase}; its interpretation and relation to the rigorous results are discussed in the introduction.

To see what the conjecture requires of the modulus, write $\eps=Q^{-1/2}$.  By \cref{thm:scalar-modulus,lem:modulus-scaling}, \eqref{eq:Psi} predicts
\[
\omega_{n,\beta,L}(\eps)
\asymp_{\beta,L}
\inf_{1\le d\le n}
\left\{d^{-\beta}+\eps\kappa_{d,n}\right\}.
\]
At an interior optimizing degree, the two terms balance, so $\eps\asymp d^{-\beta}/\kappa_{d,n}$ and the infimum is of order $d^{-\beta}$.  Thus the conjecture predicts
\begin{equation}
\label{eq:conjectured-modulus-pair}
\omega_{n,\beta,L}\!\left(\frac{d^{-\beta}}{\kappa_{d,n}}\right)
\asymp_{\beta,L}
d^{-\beta},
\end{equation}
where
multiplicative constants in the argument of the modulus
are absorbed into $\asymp_{\beta, L}$ by \cref{lem:modulus-scaling}.  By the definition of the modulus, the lower bound in \eqref{eq:conjectured-modulus-pair} requires alternatives whose target values differ by order $d^{-\beta}$ while their count laws are within Hellinger distance of order $d^{-\beta}/\kappa_{d,n}$.  The current moment-cancellation construction has the required target separation, but \cref{thm:spectral-modulus-lower} places its count laws only within radius $(n+1)/\kappa_{d+1,n}$, improved to $1/\kappa_{d+1,n}$ when $\beta>1/2$.

The upper side has a separate stability gap.  For a polynomial $P$, define the robust stability norm
\[
\mathsf S_n(P)
:=
\sup_{G\in\calG_{\beta,L}}
\sqrt{\Var_{S\sim K_nG}(a_S(P))}.
\]
An exact upper proof would follow from degree-$d$ approximants satisfying
\[
\sup_G\int|P_d-\sgn|\,\dd G\lesssim d^{-\beta},
\qquad
\mathsf S_n(P_d)\lesssim\kappa_{d,n}.
\]
The present coefficient-range argument instead gives the larger stability factor $\sqrt{n+1}\,\kappa_{d,n}$.  Closing these upper and lower gaps would establish the conjectured formula.

\section{Discussion}
\label{sec:back-general}

We studied the problem of learning LLM ensemble weights from labeled questions and sampled answers, with performance measured by the clean risk.
The interplay between $Q$ and $N$ affects how the observations should be used.  Our general theory shows that estimating each question's answer distribution separately
is enough to attain the optimal rate, up to logarithmic factors,
when questions are relatively scarce.  Our scalar analysis shows that pooling generation counts
can improve the rate
once sufficiently many questions are available.  The root-$Q$ obstruction persists even under a hard margin; improving it uniformly requires additional assumptions, such as a Bernstein condition linking loss disagreement to excess risk \citep{MassartNedelec}.

We study what sampled answers reveal without assuming an exploitable relation between question features and model answer distributions.  That is, the LLM itself processes the question text, but we do not assume that its answer distribution can be predicted from that text alone, without generating answers.
This provides a baseline for learning ensembles from sampled answers.  Additional assumptions describing the predictive value of question texts
 could permit further gains.

We considered the scalar antisymmetric model mainly as a device to study the fundamental limits of learning from sampled answers,
and it already supplies lower bounds for general ensembles (\cref{cor:general-lower-transfer}). However, it would be of interest to study two models with two possible answers per question without imposing antisymmetry, and determine whether matching upper bounds can be obtained.
In this setting, a question has two latent signed advantages $(Z_1,Z_2)$ and the clean margin of the ensemble with weights $(t,1-t)$ on question $q$ is
$
M_q(t)=tZ_1+(1-t)Z_2.
$
The learner must estimate and optimize the risk curve, $t\mapsto \Pp\{tZ_1+(1-t)Z_2\le0\}$, over $t\in[0,1]$
and, for general $K$ and multiple wrong answers, a family of polytopal decision risks.  One possible approach is to approximate the loss indicators by multivariate polynomials and minimize the estimated surrogate risk over $w$.  Our current scalar construction
balances approximation error
versus
the variability induced by the polynomial's Bernstein coefficients (\cref{prop:bias-stability,prop:coefficient-spectral}).  Extending this argument to multivariate surrogate losses would require
similar control uniformly over the candidate weights. For general $K$, this could provide the matching upper bound when $Q$ is exponential in $N$ or larger, addressing an open problem outlined in Section~\ref{sec:overview}.

Choosing the approximation degree also raises an adaptation problem.  The general buffer $\eps_N$ does not require knowledge of $\beta$, while the pooling analysis is tuned to a margin class with known $\beta$.  In the general theorem, $\beta$ controls the frequency of small positive margins at $w^\star$; in the scalar class, it controls latent mass on both sides of zero.  Inferring this behaviour from empirical margin histograms requires accounting for generation noise and, when the weights are fitted, their dependence on the training data.  Adapting pooled estimators to unknown margin behaviour could also inform how a training budget is divided between more questions and more generations per question.

Deployment introduces an additional sampling problem.  \Cref{cor:finite-deployment}
gives a glimpse of what is possible,
extending the buffered learning guarantee to $T$ deployment generations by reserving an additional margin.
The result still compares with the optimal clean risk (i.e., infinite-generation performance).
Ensembles with similar clean risk may differ in both their mean generation cost $\bar c(w)$ and the number of generations needed for reliable voting under deployment constraints.
Establishing savings at a prescribed accuracy therefore requires accounting for both.  To explore this trade-off, one could compare the deployment performance of the learned ensemble with that of the best ensemble under the same fixed generation budget.  Another possibility is to allow the number of generations to vary across questions, subject to an average budget, and study adaptive stopping in that setting.  We leave these questions for future work.

\paragraph*{Acknowledgements.}
This work was supported in part by NSF grant DMS-1945667.
We made use of large language models from OpenAI and Anthropic in developing this work. Their assistance included literature searches, estimator design, proof development and checking, and drafting and revising the manuscript. Both the mathematical arguments and their presentation were refined through sustained, iterative exchanges between the authors and these models. The authors guided this process and take full responsibility for the results and the final manuscript.

\appendix

\section{Technical lemmas for buffered ERM}
\label{app:buffered-proof}

\subsection{Uniform accuracy of the empirical margins}

For the proof, express the margins as affine comparisons in $w$.  Set
\[
p_\cdot(a\mid q)
:=
\bigl(p_1(a\mid q),\dots,p_K(a\mid q)\bigr),
\qquad
\widehat p_\cdot(a\mid q_j)
:=
\bigl(\widehat p_1(a\mid q_j),\dots,
\widehat p_K(a\mid q_j)\bigr),
\]
and define
\begin{equation}
\label{eq:gap-vectors}
\Delta_{q,a}
:=
p_\cdot(a^\star(q)\mid q)-p_\cdot(a\mid q),
\qquad
\widehat\Delta_{q_j,a}
:=
\widehat p_\cdot(a^\star(q_j)\mid q_j)
-\widehat p_\cdot(a\mid q_j).
\end{equation}
Then
\begin{equation}
\label{eq:margin-linear-form}
M_q(w)=\min_{a\ne a^\star(q)}\inner{w}{\Delta_{q,a}},
\qquad
\widehat M_{q_j}(w)
=\min_{a\ne a^\star(q_j)}\inner{w}{\widehat\Delta_{q_j,a}}.
\end{equation}

\begin{lemma}[Uniform margin accuracy]
\label{lem:margin-accuracy}
With probability at least $1-\delta/4$ over the generations, conditional on $q_1,\dots,q_Q$,
\begin{equation}
\label{eq:margin-accuracy}
\sup_{j\le Q}\sup_{w\in\Delta_K}
\abs{\widehat M_{q_j}(w)-M_{q_j}(w)}
\le\eps_N.
\end{equation}
Consequently, on this event, for every $j$ and $w$,
\begin{equation}
\label{eq:sandwich}
\one\{M_{q_j}(w)\le0\}
\le
\one\{\widehat M_{q_j}(w)\le\eps_N\}
\le
\one\{M_{q_j}(w)\le2\eps_N\}.
\end{equation}
\end{lemma}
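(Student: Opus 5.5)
The plan is to reduce the uniform margin statement to coordinatewise control of the empirical frequencies, and then derive the sandwich from the resulting margin bound.

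\emph{Coordinatewise concentration.} Conditionally on $q_1,\dots,q_Q$, each $\widehat p_i(a\mid q_j)$ is an average of $N$ independent Bernoulli$(p_i(a\mid q_j))$ indicators. Hoeffding's inequality gives
\[
\Pp\bigl\{|\widehat p_i(a\mid q_j)-p_i(a\mid q_j)|>s\bigr\}\le 2e^{-2Ns^2}.
\]
A union bound over at most $KAQ$ triples $(j,i,a)$ with $s=\eps_N/2$ gives failure probability at most $2KAQ\exp(-N\eps_N^2/2)$. With $\eps_N^2=2\log(8KAQ/\delta)/N$ this equals $2KAQ\cdot\delta/(8KAQ)=\delta/4$. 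So with conditional probability at least $1-\delta/4$,
\[
\max_{j,i,a}|\widehat p_i(a\mid q_j)-p_i(a\mid q_j)|\le\eps_N/2.
\]

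\emph{Transfer to the margins.} Using \eqref{eq:margin-linear-form}, each gap vector satisfies $\|\widehat\Delta_{q_j,a}-\Delta_{q_j,a}\|_\infty\le\eps_N$, since it is a difference of two coordinates, each off by at most $\eps_N/2$. For $w\in\Delta_K$, Hölder's inequality with $\|w\|_1=1$ gives
\[
|\inner{w}{\widehat\Delta_{q_j,a}}-\inner{w}{\Delta_{q_j,a}}|\le\eps_N
\]
uniformly in $w$. The minimum over $a\ne a^\star(q_j)$ is $1$-Lipschitz in the sup norm of its arguments, because $|\min_a x_a-\min_a y_a|\le\max_a|x_a-y_a|$. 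This yields \eqref{eq:margin-accuracy}.

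\emph{Sandwich.} Both inequalities follow from $|\widehat M-M|\le\eps_N$. If $M_{q_j}(w)\le0$, then $\widehat M_{q_j}(w)\le M_{q_j}(w)+\eps_N\le\eps_N$, which gives the left inequality. If $\widehat M_{q_j}(w)\le\eps_N$, then $M_{q_j}(w)\le\widehat M_{q_j}(w)+\eps_N\le2\eps_N$, which gives the right inequality.

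The only delicate point is matching constants. Splitting the error as $\eps_N/2$ per coordinate keeps both the margin-level bound $\eps_N$ and the exact $\delta/4$ budget. A bound on $\widehat p_w(a\mid q_j)$ directly would add a factor of $2$ to the final estimate. The step is otherwise routine, and the uniformity over the continuum $w\in\Delta_K$ is free because the bound is linear in $w$.
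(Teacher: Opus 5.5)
Your proposal is correct and follows essentially the same argument as the paper: Hoeffding at deviation $\eps_N/2$ and a union bound over at most $KAQ$ triples to reach the $\delta/4$ budget. You then transfer to $\inner{w}{\widehat\Delta_{q_j,a}-\Delta_{q_j,a}}$ uniformly over $w\in\Delta_K$, take minima over wrong answers, and read off the sandwich from $|\widehat M_{q_j}(w)-M_{q_j}(w)|\le\eps_N$.
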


\begin{proof}
For each $(j,i,a)$, Hoeffding's inequality gives
\[
\Pp\left(
\abs{\widehat p_i(a\mid q_j)-p_i(a\mid q_j)}>\frac{\eps_N}{2}
\right)
\le
2\exp\left(-\frac{N\eps_N^2}{2}\right).
\]
A union bound over at most $KAQ$ triples makes the probability of any violation at most $\delta/4$.  On the complementary event,
\[
\abs{\inner{w}{\widehat\Delta_{q_j,a}-\Delta_{q_j,a}}}
\le\eps_N
\]
for every $w\in\Delta_K$.  Taking minima over wrong answers preserves this bound and gives \eqref{eq:margin-accuracy}; the indicator sandwich follows.
\end{proof}

\subsection{Uniform convergence across questions}

\begin{lemma}[Uniform convergence of clean threshold risks]
\label{lem:clean-risk-deviation}
Fix two deterministic thresholds $s_1,s_2\ge0$.
With probability at least $1-\delta/4$ over the questions,
\begin{equation}
\label{eq:uniform-margin-deviation}
\sup_{w\in\Delta_K,\,t\in\{s_1,s_2\}}
\left|
\frac1Q\sum_{j=1}^Q\one\{M_{q_j}(w)\le t\}
-
\Pp\{M_q(w)\le t\}
\right|
\le\mathfrak v_Q(\delta).
\end{equation}
\end{lemma}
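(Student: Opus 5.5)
The plan is a standard VC/growth-function argument: bound the number of distinct labelings of the question sample induced by the two threshold classes, then apply the usual symmetrization bound. For fixed $t\ge0$, the loss is
\[
\one\{M_q(w)\le t\}=\one\Bigl\{\min_{a\ne a^\star(q)}\inner{w}{\Delta_{q,a}}\le t\Bigr\}
=\max_{a\ne a^\star(q)}\one\{\inner{w}{\Delta_{q,a}}\le t\}.
\]
This follows from \eqref{eq:margin-linear-form}. It is a union of at most $A-1$ halfspace indicators in the parameter $w$. Because $q\mapsto\Delta_{q,a}$ may be arbitrary, we do not try to exploit structure in $q$. We treat the problem from the dual viewpoint: the parameter $w$ ranges over $\Delta_K$, and each sampled question contributes at most $A-1$ fixed affine hyperplanes $\{w:\inner{w}{\Delta_{q_j,a}}=t\}$ in the $(K-1)$-dimensional affine hull of $\Delta_K$.

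The first step is to count, for a fixed sample $q_1,\dots,q_Q$ and fixed $t$, the number of distinct vectors
\[
\bigl(\one\{M_{q_j}(w)\le t\}\bigr)_{j\le Q}, \qquad w\in\Delta_K.
\]
Each such vector is determined by the sign pattern of $w$ relative to the at most $m:=(A-1)Q$ hyperplanes. The cells (of all dimensions) of an arrangement of $m$ hyperplanes in $\R^{K-1}$ number at most $\sum_{i\le K-1}\binom{m}{i}2^i$, or a comparable bound that accounts for the closed/open boundary behaviour of $\le$. This is at most $(c\,eAQ)^{K-1}$ for a universal constant $c$. Taking both thresholds $t\in\{s_1,s_2\}$ at most doubles the count. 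So the growth function of the combined class $\{(q,t)\mapsto\one\{M_q(w)\le t\}\}$ on $Q$ points, or on $2Q$ points after symmetrization, is at most $2(2ceAQ)^{K-1}$.

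The second step plugs this into the classical symmetrization-plus-Hoeffding inequality:
\[
\Pp\Bigl(\sup_{f}|P_Qf-Pf|>\epsilon\Bigr)\le 4\,\Pi(2Q)\,e^{-Q\epsilon^2/8}.
\]
Setting the right side to $\delta/4$ and solving gives
\[
\epsilon=C_0\sqrt{\frac{(K-1)\log(eAQ)+\log(8/\delta)}{Q}}
\]
after absorbing numerical factors, and $\log 2$ terms, into $C_0$. This is exactly $\mathfrak v_Q(\delta)$ from \eqref{eq:vq}.

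The main obstacle is the cell-counting step. It must handle faces as well as open cells, since $\one\{\cdot\le t\}$ is sensitive to equality, and it must correctly use dimension $K-1$ rather than $K$. I would handle this by parametrizing $\Delta_K$ through its first $K-1$ coordinates. Each constraint then becomes an affine hyperplane in $\R^{K-1}$. I would then invoke the standard bound that the number of sign vectors in $\{-,0,+\}^m$ realized by points relative to $m$ affine hyperplanes in $\R^{D}$ is at most $(c\,m/D)^D$ for $m\ge D$. Measurability of the supremum is not an issue, because it reduces to a maximum over finitely many realized patterns for each sample.
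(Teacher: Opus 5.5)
Your proposal is correct and follows the paper's proof: the paper bounds the growth function of each $\calF_t$ on $m$ points by the face count $B(m(A-1),K-1)$ of the weak sign patterns of the affine functions $\inner{w}{\Delta_{q_j,a}}-t$ on the $(K-1)$-dimensional affine hull of $\Delta_K$. It proves that count via Zaslavsky's recursion, doubles it for the union over $t\in\{s_1,s_2\}$, and applies a symmetrization-plus-Hoeffding growth-function inequality with constants $2\Pi(2Q)e^{-Qu^2/2}$ rather than your $4\Pi(2Q)e^{-Q\epsilon^2/8}$, absorbing the rest into $C_0$.

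One side remark is inaccurate, although the paper does not address measurability either. For a fixed sample, the supremum of $|P_Qf-Pf|$ is not a maximum over finitely many patterns, because $Pf_{w,t}$ still varies with $w$ inside a pattern cell. Only the symmetrized supremum of $|P_Qf-P'_Qf|$ reduces to such a finite maximum.
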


We prove the lemma by first identifying the empirical process that
appears in \eqref{eq:uniform-margin-deviation}.  For a binary function
$f$ on the question space, write
\[
Pf:=\E_{q\sim\calQ}f(q),
\qquad
P_Qf:=\frac1Q\sum_{j=1}^Qf(q_j).
\]
For a class $\mathcal H$ of binary functions, its growth function is
\begin{equation}
\label{eq:growth-function-definition}
\Pi_{\mathcal H}(m)
:=
\sup_{q_1,\ldots,q_m}
\left|
\left\{
(h(q_1),\ldots,h(q_m)):h\in\mathcal H
\right\}
\right|.
\end{equation}
For $t\ge0$, define
\begin{equation}
\label{eq:margin-class}
f_{w,t}(q):=\one\{M_q(w)\le t\},
\qquad
\calF_t:=\{f_{w,t}:w\in\Delta_K\},
\end{equation}
and let $\calF_{\mathrm{buf}}:=\calF_{s_1}\cup\calF_{s_2}$.  With this
notation, the left side of \eqref{eq:uniform-margin-deviation} is exactly
\begin{equation}
\label{eq:clean-risk-empirical-process}
\sup_{f\in\calF_{\mathrm{buf}}}|P_Qf-Pf|.
\end{equation}
We next bound the growth function of this class.  The following classical
face-count bound for hyperplane arrangements goes back to
\citet{ZaslavskyArrangements}.

\begin{lemma}[Weak sign patterns of affine functions]
\label{lem:weak-sign-patterns}
Fix affine functions $g_1,\dots,g_H:\R^p\to\R$.  Their weak sign patterns
are the vectors
\[
\bigl(\sgn g_1(x),\dots,\sgn g_H(x)\bigr)
\in\{-1,0,+1\}^H,
\qquad x\in\R^p.
\]
The number of distinct weak sign patterns is at most
\begin{equation}
\label{eq:faces-bound}
B(H,p):=\sum_{r=0}^{p}2^r\binom Hr.
\end{equation}
If $H\ge p\ge1$, then $B(H,p)\le(2eH/p)^p$.
If $H<p$, then $B(H,p)=3^H$.  In either case,
$\log B(H,p)\le Cp\log(eH)$ for a universal constant $C$.
\end{lemma}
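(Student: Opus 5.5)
The plan is to prove the bound $B(H,p)$ on the number of weak sign patterns by induction on $H$. We use the standard deletion–restriction recursion for the faces of an arrangement of affine hyperplanes. Two reductions come first. If some $g_h$ is constant, it contributes a single fixed sign to every pattern, so it can be discarded; this only decreases $H$, and $B$ is monotone in $H$. Duplicated or proportional functions may be kept, since the bound is only an upper bound.

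Let $N(H,p)$ denote the maximal number of weak sign patterns of $H$ affine functions on $\R^p$. The claim is $N(H,p)\le B(H,p)$, and the base cases are direct:
\begin{itemize}
\item $N(0,p)=1=B(0,p)$;
\item for $p=0$ the domain is a point, so $N(H,0)=1=B(H,0)$.
\end{itemize}

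For the inductive step, fix $g_1,\dots,g_H$ and consider the patterns of $g_1,\dots,g_{H-1}$. Each such pattern $\tau$ is realized on a set $C_\tau$, and we ask how many values $\sgn g_H$ takes on $C_\tau$. The value $0$ is attained only when $C_\tau$ meets the hyperplane $\{g_H=0\}$. In that case the restricted pattern $\tau$ is a pattern of the $H-1$ functions $g_1,\dots,g_{H-1}$ restricted to $\{g_H=0\}$, an affine space of dimension $p-1$. The two values $\pm1$ are both attained only if $C_\tau$ is relatively convex; it is, being an intersection of open half-spaces and hyperplanes. A convex set on which an affine function takes both signs also contains a zero, so again $C_\tau$ meets $\{g_H=0\}$. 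Hence each pattern $\tau$ extends to at most one pattern if it is not realized on the hyperplane, and to at most three if it is. This gives the recursion
\[
N(H,p)\le N(H-1,p)+2N(H-1,p-1).
\]
The identity $B(H,p)=B(H-1,p)+2B(H-1,p-1)$ follows from Pascal's rule $\binom Hr=\binom{H-1}r+\binom{H-1}{r-1}$ applied termwise, and this closes the induction. The only point needing care is the convexity argument; everything else is bookkeeping.

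The remaining claims are elementary.
\begin{itemize}
\item If $H<p$, the sum runs over all $r\le H$, so $B(H,p)=\sum_r2^r\binom Hr=3^H$ by the binomial theorem.
\item If $H\ge p\ge1$, use $2^r\le 2^p$ together with the standard estimate $\sum_{r\le p}\binom Hr\le(eH/p)^p$. The latter comes from $\sum_{r\le p}\binom Hr (p/H)^p\le\sum_r\binom Hr(p/H)^r=(1+p/H)^H\le e^p$. Together these give $B(H,p)\le(2eH/p)^p$.
\item For the logarithmic bound in the first case, $p\log(2eH/p)\le p\log(2eH)\le Cp\log(eH)$ with $C=1+\log 2$. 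In the second case, $H\log3\le p\log 3\le Cp\log(eH)$ since $\log(eH)\ge1$.
\end{itemize}
I expect no real obstacle here; the only step needing care is the convexity argument in the recursion.
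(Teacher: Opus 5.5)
Your proof is correct and follows the paper's route. The paper also discards constant functions and proves $F(H,p)\le F(H-1,p)+2F(H-1,p-1)$ by induction on $H$, counting relatively open faces; these are exactly your sets $C_\tau$, and each old face met by the new hyperplane splits into at most three pieces. It then closes with Pascal's rule and the same elementary estimates. The differences are inessential: your convexity and intermediate-value argument makes explicit the splitting step that the paper states geometrically, and your estimate $2^r\le 2^p$ replaces the paper's $2^r\binom Hr\le\binom{2H}r$.
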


\begin{proof}
Constant affine functions may be discarded because they do not increase
the number of patterns.  The hyperplanes $\{g_h=0\}$ partition $\R^p$
into relatively open faces of all dimensions.  The weak sign vector is
constant on each face, so it suffices to bound the total number of faces.

Let $F(H,p)$ be the largest possible number of such faces.  We prove the
bound by induction on $H$.  After arranging the first $H-1$ hyperplanes,
add the $H$th.  The preceding hyperplanes induce an arrangement of at
most $H-1$ hyperplanes on the new hyperplane, and hence at most
$F(H-1,p-1)$ faces there.  Each induced face can split one old face into
two while itself becoming a new face, producing a net increase of at
most two.  Therefore
\[
F(H,p)\le F(H-1,p)+2F(H-1,p-1),
\]
with $F(0,p)=F(H,0)=1$.  The quantity $B(H,p)$ obeys the same recurrence
by Pascal's identity, so induction proves $F(H,p)\le B(H,p)$.  Finally,
when $H\ge p$,
\[
B(H,p)
\le\sum_{r=0}^{p}\binom{2H}{r}
\le\left(\frac{2eH}{p}\right)^p.
\]
If $H<p$, then $\binom Hr=0$ for $r>H$, and the binomial theorem
gives $B(H,p)=\sum_{r=0}^H2^r\binom Hr=3^H$.  The stated logarithmic
bound follows from these two cases.
\end{proof}

\begin{lemma}[Growth of the clean margin classes]
\label{lem:capacity}
Let $p=K-1$.  For every $m\ge1$ and $t\ge0$,
\begin{equation}
\label{eq:growth-bound}
\Pi_{\calF_t}(m)
\le
B\bigl(m(A-1),p\bigr).
\end{equation}
Consequently,
\begin{equation}
\label{eq:buffered-growth-bound}
\Pi_{\calF_{\mathrm{buf}}}(m)
\le
2B\bigl(m(A-1),p\bigr),
\qquad
\log\Pi_{\calF_{\mathrm{buf}}}(m)
\le C\bigl(1+p\log(eAm)\bigr),
\end{equation}
where $C$ is universal.
\end{lemma}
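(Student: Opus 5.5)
The idea is to realize every function in $\calF_t$, restricted to a sample $q_1,\dots,q_m$, as a function of the weak sign pattern of a fixed family of affine functions of a $p$-dimensional parameter, and then apply \cref{lem:weak-sign-patterns}. First I parametrize the simplex affinely by $x=(w_1,\dots,w_{K-1})\in\R^{p}$ with $w_K=1-\sum_{i<K}x_i$, so that $w\in\Delta_K$ corresponds to $x$ in a polytope of $\R^p$. By \eqref{eq:margin-linear-form}, $M_{q_j}(w)=\min_{a\ne a^\star(q_j)}\inner{w}{\Delta_{q_j,a}}$. Hence
\[
f_{w,t}(q_j)=\one\Bigl\{\min_{a\ne a^\star(q_j)}\inner{w}{\Delta_{q_j,a}}\le t\Bigr\}
=\max_{a\ne a^\star(q_j)}\one\bigl\{\inner{w}{\Delta_{q_j,a}}-t\le0\bigr\}.
\]
For each fixed $j$ and each wrong answer $a$, the map $x\mapsto g_{j,a}(x):=\inner{w(x)}{\Delta_{q_j,a}}-t$ is affine in $x$. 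In total there are at most $H=m(A-1)$ such functions, since $|\calA(q_j)|\le A$.

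Next, each indicator $\one\{g_{j,a}(x)\le0\}$ is determined by $\sgn g_{j,a}(x)\in\{-1,0,+1\}$, because it equals one exactly when the sign is $-1$ or $0$. It follows that the vector $(f_{w,t}(q_1),\dots,f_{w,t}(q_m))$ is a deterministic function of the weak sign pattern $(\sgn g_{j,a}(x))_{j,a}$. The number of distinct label vectors is therefore at most the number of weak sign patterns of these affine functions over all of $\R^p$, which can only exceed the count over the simplex. By \cref{lem:weak-sign-patterns} this is at most $B(m(A-1),p)$. If some $q_j$ coincide, or have fewer wrong answers, we pad with constant functions or duplicates, which do not increase the count. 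Taking the supremum over samples gives \eqref{eq:growth-bound}.

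For $\calF_{\mathrm{buf}}=\calF_{s_1}\cup\calF_{s_2}$, the set of label vectors realized on a sample is the union of the two sets, so $\Pi_{\calF_{\mathrm{buf}}}(m)\le\Pi_{\calF_{s_1}}(m)+\Pi_{\calF_{s_2}}(m)\le2B(m(A-1),p)$. The logarithmic bound then follows from $\log B(H,p)\le Cp\log(eH)$ with $H=m(A-1)\le Am$, together with $\log 2\le C$. This gives $\log\Pi\le C(1+p\log(eAm))$.

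The only delicate point is the degenerate case $K=1$, where $p=0$. Here $B(H,0)=1$, consistent with a single weight, so the lemma's stated requirement $p\ge1$ is needed only for the explicit bound and not for \eqref{eq:growth-bound}. Apart from this, the argument reduces to the observation that a min over wrong answers of affine forms yields a threshold indicator that is a function of the weak signs.
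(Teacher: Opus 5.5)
Your proof is correct and follows the paper's argument essentially verbatim: it writes each label $f_{w,t}(q_j)$ as a function of the weak sign pattern of the at most $m(A-1)$ affine functions $\inner{w}{\Delta_{q_j,a}}-t$ on the $(K-1)$-dimensional affine hull of $\Delta_K$, applies \cref{lem:weak-sign-patterns}, and then uses subadditivity of growth functions under unions together with $\log B(H,p)\le Cp\log(eH)$. Your padding remark is unnecessary, since $B(H,p)$ is nondecreasing in $H$, but it is harmless.
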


\begin{proof}
Fix $q_1,\dots,q_m$ and $t$.  For every wrong answer
$a\ne a^\star(q_j)$, consider the affine function
\[
g_{j,a}(w):=\inner{w}{\Delta_{q_j,a}}-t
\]
on the $(K-1)$-dimensional affine hull of $\Delta_K$.  Since
\[
f_{w,t}(q_j)=1
\quad\Longleftrightarrow\quad
g_{j,a}(w)\le0
\text{ for at least one }a\ne a^\star(q_j),
\]
the label vector $(f_{w,t}(q_1),\dots,f_{w,t}(q_m))$ is determined by
the weak sign pattern of at most $m(A-1)$ affine functions.
\Cref{lem:weak-sign-patterns} proves \eqref{eq:growth-bound}.

The restrictions realized by a union of two classes are contained in
the union of their two sets of restrictions.  Hence
$\Pi_{\calF_{\mathrm{buf}}}(m)\le
\Pi_{\calF_{s_1}}(m)+\Pi_{\calF_{s_2}}(m)$, which gives the first bound
in \eqref{eq:buffered-growth-bound}.  The second follows from
\cref{lem:weak-sign-patterns}, with the additive $\log 2$ accounted
for by the constant term.  This also covers $p=0$, when $B(H,0)=1$.
\end{proof}

We use the following standard consequence of symmetrization.

\begin{lemma}[Growth-function deviation inequality]
\label{lem:growth-deviation}
Let $\mathcal H$ be a class of binary functions and let $P_Q$ be the
empirical measure of $Q$ independent observations from $P$.  For every
$u>0$,
\begin{equation}
\label{eq:vc-ineq}
\Pp\left\{
\sup_{h\in\mathcal H}|P_Qh-Ph|>u
\right\}
\le
2\Pi_{\mathcal H}(2Q)
\exp\left(-\frac{Qu^2}{2}\right).
\end{equation}
\end{lemma}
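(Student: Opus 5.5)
We prove the bound by the classical Vapnik--Chervonenkis route: a ghost-sample symmetrization reduces the supremum over $\mathcal H$ to a supremum over the finitely many label patterns on $2Q$ points, and each pattern is then handled by a concentration bound for a random split of those points. Throughout we use two reductions. If $2\Pi_{\mathcal H}(2Q)e^{-Qu^2/2}\ge1$ there is nothing to prove. If $\Pi_{\mathcal H}(2Q)=1$, then every $h\in\mathcal H$ agrees with a single $h_0$ at every point, hence $P$-a.s. and in $L_1(P)$. So the supremum reduces to one function, and Hoeffding's inequality gives $2e^{-2Qu^2}\le 2e^{-Qu^2/2}$ directly. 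It therefore suffices to treat $\Pi_{\mathcal H}(2Q)\ge2$ with $Qu^2/2>\log(2\Pi_{\mathcal H}(2Q))\ge\log4$, so in particular $Qu^2>2\log 4$. We assume the measurability of the supremum that is standard for such statements, for instance a countable pointwise-dense subclass.

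\emph{Step 1 (symmetrization).} Let $q_1',\dots,q_Q'$ be an independent ghost sample with empirical measure $P_Q'$. On the event $\{\sup_h|P_Qh-Ph|>u\}$, pick a measurable witness $h^\ast$. Conditionally on the first sample, Chebyshev gives
\[
\Pp\bigl(|P_Q'h^\ast-Ph^\ast|>\gamma u\bigm|q_{1:Q}\bigr)\le\frac{1}{4Q\gamma^2u^2},
\]
because a Bernoulli variance is at most $1/4$. For a fixed $\gamma\in(0,1)$ with $4Q\gamma^2u^2\ge2$ this probability is at most $1/2$, and therefore
\[
\Pp\Bigl\{\sup_h|P_Qh-Ph|>u\Bigr\}
\le2\,\Pp\Bigl\{\sup_h|P_Qh-P_Q'h|>(1-\gamma)u\Bigr\}.
\]
The factor $2$ in the statement is exactly this symmetrization factor. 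The remaining work must then deliver $\Pi_{\mathcal H}(2Q)e^{-Qu^2/2}$ without any further leading constant.

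\emph{Step 2 (random split and pattern count).} Condition on the pooled multiset $z_1,\dots,z_{2Q}$. By exchangeability, $(q_{1:Q},q'_{1:Q})$ is a uniformly random split of these points into two halves of size $Q$. On the pooled points, $\mathcal H$ realizes at most $\Pi_{\mathcal H}(2Q)$ label vectors, so a union bound over patterns is legitimate. For a fixed label vector with pooled mean $\bar\nu$, write $\nu_1$ for the mean over the first half. Then $P_Qh-P_Q'h=2(\nu_1-\bar\nu)$, and $\nu_1$ is the mean of $Q$ draws without replacement from the $2Q$ binary labels. Hoeffding's bound for sampling without replacement gives
\[
\Pp\bigl(|\nu_1-\bar\nu|>v\bigr)\le2e^{-2Qv^2}.
\]
We would sharpen this with Serfling's finite-population correction, which replaces $2Qv^2$ by $2Qv^2/(1-(Q-1)/(2Q))\approx4Qv^2$. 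With $v=(1-\gamma)u/2$ the exponent becomes about $Q(1-\gamma)^2u^2$, which exceeds $Qu^2/2$ as soon as $(1-\gamma)^2>1/2$.

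\emph{Step 3 (closing the constants; the main obstacle).} The difficulty is pure bookkeeping. Step 2 produces a two-sided factor $2$, and Step 1 has already spent the leading $2$. The plan is to absorb the extra factor into the exponent slack. Take $\gamma$ as small as the Chebyshev requirement $4Q\gamma^2u^2\ge2$ allows, namely $\gamma=(2Qu^2)^{-1/2}$. Because $Qu^2>2\log4$, this gives $\gamma<1/\sqrt{4\log4}\approx0.42$. We then need
\[
2e^{-cQ(1-\gamma)^2u^2}\le e^{-Qu^2/2},
\qquad\text{that is,}\qquad
Qu^2\bigl(c(1-\gamma)^2-\tfrac12\bigr)\ge\log2,
\]
where $c\approx1$ comes from the Serfling correction. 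The pessimistic end of this verification is the corner $\gamma$ near $0.42$, where $(1-\gamma)^2$ is close to $1/3$. That corner is where I expect the argument to be tight or to fail outright. If it fails, the first fallback is to sharpen Step 1 by using Cantelli's one-sided bound on the side opposite to the deviation of $h^\ast$, which allows a smaller $\gamma$. The second fallback is to recheck the size of the trivial region: the assumption $\Pi_{\mathcal H}(2Q)\ge2$ may push $Qu^2$ higher there, and the required inequality might then follow by a direct numerical verification. In either case, the final answer is the chain Step 1 $\to$ Step 2 $\to$ union bound, producing $2\,\Pi_{\mathcal H}(2Q)e^{-Qu^2/2}$.
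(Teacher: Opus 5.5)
Your Step 3 does not merely look tight at the corner. It fails there, and none of your fallbacks repairs it, so the proposal does not establish the stated constants.

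Take a class with $\Pi_{\mathcal H}(2Q)=2$, for instance $\mathcal H=\{h_0,1-h_0\}$. The trivial region is then exactly $Qu^2\le2\log4$, so you must handle, say, $Qu^2=2.8$, where the claimed bound $4e^{-1.4}\approx0.99$ is nontrivial. Your Chebyshev choice gives $\gamma\approx0.42$ and $(1-\gamma)^2\approx1/3$. Serfling's constant in your notation is $c=Q/(Q+1)<1$, not $1$. Hence $c(1-\gamma)^2-\tfrac12<0$, the inequality you need is false, and the chain actually delivers roughly $2\cdot2\cdot2\,e^{-0.93}\approx3$.

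Cantelli only raises $(1-\gamma)^2$ to about $0.49$, still below $\tfrac12$, and the chain then gives about $2$. The trivial region cannot be enlarged either, because $\Pi_{\mathcal H}(2Q)=2$ genuinely occurs.

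The structural problem is this. Tail-probability symmetrization spends the leading factor $2$ and a fraction $\gamma$ of the threshold. The two-sided bound for the random split then needs a second factor $2$, which can only be absorbed when $Qu^2$ is large. What your chain yields is a bound of the form $C\,\Pi_{\mathcal H}(2Q)e^{-cQu^2}$. That would suffice for \cref{lem:clean-risk-deviation} after enlarging $C_0$, but it is not the lemma as stated.

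The missing idea is to symmetrize exponential moments rather than tail probabilities, which costs nothing. Introduce the ghost sample and set $Z'=\sup_h|P_Qh-P_Q'h|$. The identity $P_Qh-Ph=\E[P_Qh-P_Q'h\mid q_{1:Q}]$ and conditional Jensen give $\E e^{\lambda Z}\le\E e^{\lambda Z'}$ for every $\lambda>0$, with no threshold loss and no leading constant.

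Random swaps encoded by Rademacher signs then reduce $Z'$, conditionally on the $2Q$ points, to a maximum over at most $\Pi_{\mathcal H}(2Q)$ sums $Q^{-1}\sum_j\sigma_jd_j$ with $d_j\in\{-1,0,1\}$. Your uniform split would also work here, via Hoeffding's comparison of sampling with and without replacement. Bounding $e^{\lambda|x|}\le e^{\lambda x}+e^{-\lambda x}$ and applying Hoeffding's lemma gives $\E e^{\lambda Z'}\le2\Pi_{\mathcal H}(2Q)e^{\lambda^2/(2Q)}$. Markov's inequality with $\lambda=Qu$ then yields exactly $2\Pi_{\mathcal H}(2Q)e^{-Qu^2/2}$.

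In this route the factor $2$ comes from two-sidedness, not from symmetrization, which is why the constants close. This is the paper's argument.
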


\begin{proof}
Write $Z:=\sup_{h\in\mathcal H}|P_Qh-Ph|$.  Introduce an
independent ghost sample with empirical measure $P'_Q = \frac1Q \sum_{j=1}^Q \delta_{q'_j}$, and write
$Z':=\sup_{h\in\mathcal H}|P_Qh-P'_Qh|$.  Conditional on the original
sample, $P_Qh-Ph=\E[P_Qh-P'_Qh\mid q_1,\dots,q_Q]$ for every $h$.
Convexity of the supremum norm and of $x\mapsto e^{\lambda x}$ therefore
gives $\E e^{\lambda Z}\le\E e^{\lambda Z'}$ for every $\lambda>0$.

By exchangeability of each original--ghost pair, $Z'$ has the same
distribution after independent random swaps.  Encode the swaps by
Rademacher signs $\sigma_1,\dots,\sigma_Q$.  Conditional on the combined
sample, a fixed restriction of $h$ gives coefficients
$d_j=h(q_j)-h(q'_j)\in\{-1,0,+1\}$, and the randomized difference is
$Q^{-1}\sum_{j=1}^Q\sigma_jd_j$.  There are at most
$\Pi_{\mathcal H}(2Q)$ such restrictions.  Hoeffding's exponential-moment
bound, together with $e^{\lambda|x|}\le e^{\lambda x}+e^{-\lambda x}$,
thus gives
\[
\E e^{\lambda Z}
\le
2\Pi_{\mathcal H}(2Q)
\exp\left(\frac{\lambda^2}{2Q}\right).
\]
Markov's inequality with $\lambda=Qu$ now proves
\eqref{eq:vc-ineq}.
\end{proof}

\begin{proof}[Proof of \cref{lem:clean-risk-deviation}]
By \eqref{eq:clean-risk-empirical-process}, the supremum in
\eqref{eq:uniform-margin-deviation} is
$\sup_{f\in\calF_{\mathrm{buf}}}|P_Qf-Pf|$.  Apply
\cref{lem:growth-deviation} to $\calF_{\mathrm{buf}}$, take $u=\mathfrak v_Q(\delta)$ and use
\eqref{eq:buffered-growth-bound} with $m=2Q$.  Taking the universal
constant $C_0$ in \eqref{eq:vq} sufficiently large makes the resulting
probability at most $\delta/4$.
\end{proof}

\subsection{Proof of the finite-deployment corollary}
\label{app:finite-deployment-proof}

\begin{proof}[Proof of \cref{cor:finite-deployment}]
We first bound the error of the deployment vote on questions with margin
greater than $r$.  The enlarged training buffer will then control the
population mass of the remaining questions.

Fix $w$ and $q$ with $M_q(w)>r$.  For each wrong answer
$a\ne a^\star(q)$, define the vote differences
\[
D_t(a):=\one\{Y_t=a^\star(q)\}-\one\{Y_t=a\},
\qquad t\in[T].
\]
Conditional on $q$, these variables are independent, take values in
$[-1,1]$, and have mean
$p_w(a^\star(q)\mid q)-p_w(a\mid q)\ge M_q(w)>r$.
Hoeffding's inequality gives
\[
\Pp\!\left\{\sum_{t=1}^T D_t(a)\le0\,\middle|\,q\right\}
\le e^{-T M_q(w)^2/2}
\le e^{-Tr^2/2}.
\]
The gold answer fails to be the unique empirical plurality only if this
event occurs for some wrong answer.  A union bound over at most $A-1$
wrong answers therefore gives the conditional error bound
$(A-1)e^{-Tr^2/2}$.  On questions with $M_q(w)\le r$, we bound the
conditional error probability by one.  Averaging over questions gives
\begin{equation}
\label{eq:deployment-vote-bound}
R_T^{\mathrm{dep}}(w)
\le R_r(w)+(A-1)e^{-Tr^2/2},
\qquad R_r(w):=\Pp\{M_q(w)\le r\}.
\end{equation}
This bound holds for every $w$.  Conditional on the training data, the
learned weight is fixed, and the fresh deployment question and generations
have the law used above.

The rest of the argument follows the proof of \cref{thm:buffered-main}, and we provide a brief sketch below.
Let $\widetilde R_s$ be the clean empirical threshold risk defined there.  Apply
\cref{lem:clean-risk-deviation} at the fixed thresholds $r$ and
$2\eps_N+r$.  With probability at least $1-\delta/4$ over the questions,
$|\widetilde R_s(w)-R_s(w)|\le\mathfrak v_Q(\delta)$ simultaneously
for all $w$ and both thresholds.  Separately,
\cref{lem:margin-accuracy} bounds the uniform training-margin error by
$\eps_N$ with probability at least $1-\delta/4$, conditional on the
questions.  On this latter event, the buffered sandwich
\eqref{eq:intro-sandwich}, with thresholds shifted by $r$, holds for every
$j\in[Q]$ and $w\in\Delta_K$:
\[
\one\{M_{q_j}(w)\le r\}
\le \one\{\widehat M_{q_j}(w)\le\eps_N+r\}
\le \one\{M_{q_j}(w)\le2\eps_N+r\}.
\]
Averaging over the training questions gives
\begin{equation}
\label{eq:deployment-buffer-sandwich}
\widetilde R_r(w)
\le \widehat R_{\eps_N+r}(w)
\le \widetilde R_{2\eps_N+r}(w),
\qquad w\in\Delta_K.
\end{equation}
The two events hold jointly with probability at least $1-\delta$.
The same comparison argument now applies at these shifted thresholds:
the sandwich and empirical
optimality compare the clean empirical risks at
$\widehat w_{\eps_N+r}$ and $w^\star$, while uniform convergence adds
$\mathfrak v_Q(\delta)$ at each end.  Consequently,
\[
R_r(\widehat w_{\eps_N+r})
\le R_{2\eps_N+r}(w^\star)+2\mathfrak v_Q(\delta).
\]
Since $2\eps_N+r\le t_0$, the one-sided margin condition gives
\[
R_{2\eps_N+r}(w^\star)-R(w^\star)
=\Pp\{0<M_q(w^\star)\le2\eps_N+r\}
\le C_m(2\eps_N+r)^\beta.
\]
Combining this inequality with \eqref{eq:deployment-vote-bound} proves
\eqref{eq:finite-deployment-bound}.

For \eqref{eq:finite-deployment-rate}, take
$r_T=\sqrt{2/T}\log(eAT)$.  The last term in
\eqref{eq:finite-deployment-bound} becomes
$(A-1)\exp\{-\log^2(eAT)\}$, which, for fixed $A$, decreases faster than
any fixed power of $T^{-1}$.  Also,
\[
(2\eps_N+r_T)^\beta
\le C_\beta(\eps_N^\beta+r_T^\beta),
\qquad
r_T^\beta=(2/T)^{\beta/2}\log^\beta(eAT).
\]
Here one can take $C_\beta=3^\beta$ for every $\beta>0$, since
$2\eps_N+r_T\le3\max\{\eps_N,r_T\}$.
Substitution of \eqref{eq:epsilon-N} and \eqref{eq:vq} now gives
\eqref{eq:finite-deployment-rate}.
\end{proof}

\section{Technical details for the scalar modulus}
\label{app:scalar-modulus}

\subsection{Geometry of the margin class}

\begin{lemma}[Basic properties of the margin class]
\label{lem:margin-class-properties}
The class $\calG_{\beta,L}$ is convex, invariant under reflection
$G\mapsto G^\dagger$, where $G^\dagger(B)=G(-B)$, and compact for weak convergence.  Every $G\in\calG_{\beta,L}$ satisfies $G(\{0\})=0$.  The functional $G\mapsto\theta(G)$ and the channel map $G\mapsto K_nG$ are affine and continuous on $\calG_{\beta,L}$.  Moreover,
\begin{equation}
\label{eq:reflection-properties}
\theta(G^\dagger)=-\theta(G),
\qquad
(K_nG^\dagger)(k)=(K_nG)(n-k),
\quad k=0,\dots,n.
\end{equation}
\end{lemma}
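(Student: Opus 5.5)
The plan is to verify the listed properties one at a time, directly from the definition \eqref{eq:Gclass}, treating compactness as the only step that needs care.

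\emph{Convexity.} If $G,H\in\calG_{\beta,L}$ and $\alpha\in[0,1]$, then for every $t$,
\[
(\alpha G+(1-\alpha)H)([-t,t])\le \alpha Lt^\beta+(1-\alpha)Lt^\beta=Lt^\beta .
\]

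\emph{Reflection invariance.} The interval $[-t,t]$ is symmetric, so $G^\dagger([-t,t])=G([-t,t])\le Lt^\beta$.

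\emph{No atom at zero.} For every $t\in(0,1]$ we have $G(\{0\})\le G([-t,t])\le Lt^\beta$. Letting $t\downarrow 0$ gives $G(\{0\})=0$.

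\emph{Compactness.} Probability measures on the compact interval $[-1,1]$ form a weakly compact set by Prokhorov's theorem, so it suffices to show that $\calG_{\beta,L}$ is weakly closed. Suppose $G_m\to G$ weakly. The set $[-t,t]$ is closed, so the portmanteau theorem gives
\[
G([-t,t])\ge\limsup_m G_m([-t,t]),
\]
which points the wrong way. Instead, use the open interval $(-t,t)$:
\[
G((-t,t))\le\liminf_m G_m((-t,t))\le Lt^\beta .
\]
Then write $G([-t,t])=\inf_{s>t}G((-s,s))\le\inf_{s>t}Ls^\beta=Lt^\beta$. For $t=1$ the bound $L\ge1$ is trivial, so the range $s>1$ causes no problem. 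This limiting argument is the main subtlety of the proof.

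\emph{Affinity.} Both $\theta(G)=\int\sgn\,\dd G$ and each coordinate $(K_nG)(k)=\int b_{k,n}((1+z)/2)\,\dd G(z)$ are integrals of fixed functions against $G$, so both maps are affine.

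\emph{Continuity.} Each integrand $b_{k,n}((1+z)/2)$ is a polynomial, hence continuous, so $G\mapsto K_nG$ is weakly continuous. The integrand $\sgn$ is bounded and discontinuous only at $0$. Every weak limit in $\calG_{\beta,L}$ satisfies $G(\{0\})=0$, so the discontinuity set of $\sgn$ is $G$-null. The continuous mapping theorem (equivalently, the portmanteau theorem for bounded a.e.-continuous functions) then gives $\int\sgn\,\dd G_m\to\int\sgn\,\dd G$ whenever $G_m\to G$ within the class. Compactness is what guarantees that limits of sequences in the class remain in the class, and the absence of an atom at zero is what makes $\theta$ continuous.

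\emph{Reflection identities.} Since $\sgn$ is odd,
\[
\theta(G^\dagger)=\int\sgn(-z)\,\dd G(z)=-\theta(G).
\]
For the channel, the Bernstein symmetry $b_{k,n}(1-p)=b_{n-k,n}(p)$ with $p=(1+z)/2$ gives
\[
b_{k,n}\Bigl(\frac{1-z}{2}\Bigr)=b_{n-k,n}\Bigl(\frac{1+z}{2}\Bigr).
\]
Therefore
\[
(K_nG^\dagger)(k)=\int b_{k,n}\Bigl(\frac{1-z}{2}\Bigr)\dd G(z)=(K_nG)(n-k).
\]
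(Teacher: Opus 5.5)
Your proof is correct and follows the paper's argument essentially step for step. This includes the open-interval Portmanteau device for weak closedness: the paper applies it to $(-t-\eta,t+\eta)$ and lets $\eta\downarrow0$, which is the same as your $\inf_{s>t}$. The only deviation is in the continuity of $\theta$. You invoke the Portmanteau theorem for bounded functions that are continuous $G$-a.e., whereas the paper approximates $\sgn$ by the clipped sign $\psi_\delta$, with error at most $2L\delta^\beta$ uniformly over $\calG_{\beta,L}$. Both arguments are valid.
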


\begin{proof}
Convexity and reflection invariance are immediate from the definition of the class.  Since $[-1,1]$ is compact, the set of all probability measures on it is weakly compact.  It remains to show that the margin class is weakly closed.  Suppose $G_m\Rightarrow G$.  Fix $0<t<1$ and choose $\eta>0$ such that $t+\eta<1$.  By the Portmanteau theorem applied to the open interval $(-t-\eta,t+\eta)$,
\[
G([-t,t])
\le
G((-t-\eta,t+\eta))
\le
\liminf_{m\to\infty}G_m((-t-\eta,t+\eta))
\le
L(t+\eta)^\beta.
\]
Letting $\eta\downarrow0$ proves the margin inequality for $t<1$; at $t=1$ it is automatic because $G([-1,1])=1\le L$.  Thus $\calG_{\beta,L}$ is weakly closed and hence compact.  Taking $t\downarrow0$ shows that $G(\{0\})=0$.

For continuity of $\theta$, let $\psi_\delta$ be the continuous clipped-sign function that equals $-1$ on $[-1,-\delta]$, equals $z/\delta$ on $[-\delta,\delta]$, and equals $1$ on $[\delta,1]$.  Uniformly over the margin class,
\[
\left|\theta(G)-\int\psi_\delta\,\dd G\right|
\le
2G([-\delta,\delta])
\le
2L\delta^\beta.
\]
The integral of $\psi_\delta$ is weakly continuous.  First letting $m\to\infty$ and then $\delta\downarrow0$ shows that $\theta(G_m)\to\theta(G)$.

Each coordinate of $K_nG$ is the integral of a bounded continuous polynomial in $z$, so the channel map is weakly continuous; both the channel and target maps are affine by linearity of integration.  Finally, reflection changes the sign of the target, and the Bernstein identity
\[
b_{k,n}\!\left(\frac{1-z}{2}\right)
=
b_{n-k,n}\!\left(\frac{1+z}{2}\right)
\]
gives the channel identity in \eqref{eq:reflection-properties}.
\end{proof}

\subsection{A finite-sample Hellinger-modulus upper bound}

The scalar characterization requires only an upper bound for absolute-error estimation.  We give the needed result directly for compact convex categorical models.  The proof follows the classical modulus-via-testing method of \citet{DonohoLiuII}: robust tests separate target level sets, and a multiscale interval search turns those tests into an estimator.  Splitting the observations across scales makes the finite-sample expected-loss calculation straightforward.

Let $\mathcal X$ be a nonempty compact convex subset of $\Delta_m\times\R$, where $\Delta_m$ denotes the probability simplex on a finite alphabet of size $m$.  Write $x=(p,\tau)$ for a generic element and define
\begin{equation}
\label{eq:abstract-modulus}
\omega_{\mathcal X}(r)
:=
\sup\left\{
|\tau-\tau'|:
(p,\tau),(p',\tau')\in\mathcal X,
\ h(p,p')\le r
\right\}.
\end{equation}

\begin{theorem}[Finite-sample Hellinger-modulus upper bound]
\label{thm:modulus-upper}
Under $(p,\tau)\in\mathcal X$, observe $Y_1,\dots,Y_Q\overset{\mathrm{iid}}{\sim}p$.  Then
\begin{equation}
\label{eq:modulus-upper}
\inf_{\widehat\tau}
\sup_{(p,\tau)\in\mathcal X}
\E_p|\widehat\tau-\tau|
\le
\frac72\,
\omega_{\mathcal X}(32Q^{-1/2}).
\end{equation}
The infimum is over all possibly randomized estimators $\widehat\tau$ based on these observations.
The projection $(p,\tau)\mapsto p$ need not be injective, so the result allows $\omega_{\mathcal X}(0)>0$.
\end{theorem}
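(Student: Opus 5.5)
Following \citet{DonohoLiuII}, I will build robust tests between target level sets and then run a multiscale interval search that shrinks an interval containing $\tau$. Write $\omega:=\omega_{\mathcal X}$ and let $T:=\{\tau:(p,\tau)\in\mathcal X\}=[\tau_{\min},\tau_{\max}]$, a compact interval because $\mathcal X$ is compact and convex.

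\emph{Step 1: robust tests.} For $a<b$ let $\mathcal P_{\le a}=\{p:(p,\tau)\in\mathcal X,\tau\le a\}$ and $\mathcal P_{\ge b}$ defined analogously. Both are compact convex subsets of $\Delta_m$, as projections of compact convex sets. Suppose $b-a>\omega(r)$. Then every pair from the two sets has Hellinger distance exceeding $r$. I will use the minimax (Le Cam--Birgé) testing bound for convex hulls: there is a test $\phi$ with
\[
\sup_{p\in\mathcal P_{\le a}}\E_p\phi+\sup_{p'\in\mathcal P_{\ge b}}\E_{p'}(1-\phi)\le 2\bigl(1-\tfrac12 h^2_{\min}\bigr)^{k},
\]
based on $k$ iid observations, where $h_{\min}$ is the minimal Hellinger distance between the sets. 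This holds because the Hellinger affinity of product measures tensorizes, and a saddle point exists by compactness and convexity. Since $h_{\min}>r$, both error probabilities are at most $\exp(-kr^2/2)$.

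\emph{Step 2: multiscale search.} Split the $Q$ observations into blocks of geometrically growing sizes. At stage $\ell$, I hold an interval $I_\ell\ni\tau$ of length $\ell_\ell$ and choose the radius $r_\ell$ so that $\omega(r_\ell)$ is a fixed fraction of $\ell_\ell$. Cut $I_\ell$ into thirds and test the lower third against the upper third, with separation taken as at least $\omega(r_\ell)$ plus a little. Then discard the side the test rejects; I will also need to check that the correct side is kept whenever the test succeeds. The process stops once the required $r_\ell$ falls to about $32Q^{-1/2}$, relative to the block size. At that point the remaining interval has length $O(\omega(32Q^{-1/2}))$, and the estimate is its midpoint.

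\emph{Step 3: expected loss.} An error at stage $\ell$ costs at most $|T|\le\ell_1$ in the worst case. More carefully, it costs $O(\ell_\ell)$, since later stages can only move within the wrong retained interval. The failure probability at stage $\ell$ is $\exp(-k_\ell r_\ell^2/2)$. If block sizes are chosen so that $k_\ell r_\ell^2$ grows linearly in the number of remaining stages, the errors sum geometrically. Here I need concavity-type control of $\omega$: for a convex model, $\omega$ is concave in the squared Hellinger radius up to constants, or at least satisfies $\omega(cr)\le c\,\omega(r)$ for $c\ge1$. This follows by convex combination of a near-extremal pair with the midpoint. With it, the scales match and the total expected error is $\le\frac72\omega(32Q^{-1/2})$. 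Randomization is not needed, though it is allowed.

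\emph{Main obstacle.} The hardest part will be the bookkeeping that yields the explicit constants $7/2$ and $32$. This means choosing block sizes that sum to at most $Q$, and verifying that the fractional reduction at each stage interacts correctly with the superadditivity property $\omega(cr)\le c\,\omega(r)$. Nonidentifiability ($\omega(0)>0$) is harmless, because we simply stop once $\ell_\ell$ reaches the order of $\omega(\cdot)$.
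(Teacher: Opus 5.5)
Your outline has the same architecture as the paper's proof. Both use Donoho--Liu robust tests between the convex level sets $\{\tau\le a\}$ and $\{\tau\ge b\}$ and a search that tests the lower against the upper third of the current interval on a fresh block and keeps two thirds. Both bound the loss on the first unsafe update by the current length, and make failure probabilities summable by letting $k_sr_s^2$ grow linearly in the number of remaining stages. The two supporting lemmas are obtained differently, and the constants are not yet derived.

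For the test, the paper avoids minimax theory. It takes the affinity-maximizing pair $(p_0,q_1)\in\mathcal P_0\times\mathcal P_1$ and uses the likelihood-ratio test between $p_0^{\otimes k}$ and $q_1^{\otimes k}$. It then bounds the error under every $p\in\mathcal P_0$ by $\rho(p_0,q_1)^k$, via Markov's inequality and the first-order condition $\sum_ip_i\sqrt{q_{1i}/p_{0i}}\le\rho(p_0,q_1)$ (symmetrically on $\mathcal P_1$), and removes positivity by smoothing (\cref{lem:robust-affinity-test}). No mixtures of product laws ever appear. Your minimax route is valid, but not ``because affinity tensorizes'': the saddle point lives on convex hulls of $\{p^{\otimes k}\}$, so you need Le Cam's bound on the affinity between such hulls. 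Done that way, it bounds the \emph{sum} of the two worst-case errors by $(1-r_*^2/2)^k$. The inequality you actually wrote, with the factor $2$, gives only $2e^{-kr^2/2}$ per error, which would cost a constant ($4\delta$ instead of $\tfrac72\delta$ with the paper's block sizes).

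For the scaling, your $\omega(cr)\le c\,\omega(r)$ is true and sharper than \cref{lem:modulus-scaling}. It is not implied by concavity in $r^2$, which only gives the factor $c^2$. Nor does it follow from the paper's joint-convexity argument, which under shrinkage toward the midpoint gives only $h^2(p_0^\lambda,p_1^\lambda)\le\lambda h^2(p_0,p_1)$, hence the paper's $c^2$. The linear version needs the coordinatewise identity $(\sqrt{m+\lambda d}-\sqrt{m-\lambda d})^2=2m\,g(\lambda^2d^2/m^2)$ with $g(t)=1-\sqrt{1-t}$, which is convex with $g(0)=0$. This gives $h^2(p_0^\lambda,p_1^\lambda)\le\lambda^2h^2(p_0,p_1)$. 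It lets you take $r_s\propto\ell_s$ and gives more slack in the sample budget. The paper shows the quadratic version, with $r_s^2=\eps_0^2\ell_s/(6\delta)$, already suffices.

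Your sketch asserts rather than derives $\tfrac72$ and $32$. In the paper they come from setting $\delta=\omega(32Q^{-1/2})$, stopping at the first $\ell_S\le6\delta$, and taking $k_s=\lceil4r_s^{-2}\log(\ell_s/\delta)\rceil$. Then stage $s$ fails with probability at most $(\delta/\ell_s)^2$, and the risk is at most $3\delta+\delta^2\sum_s\ell_s^{-1}<\tfrac72\delta$. This first requires disposing of the cases $D=0$, $\eps_0\ge\sqrt2$, and $\delta\ge D/6$, so that $Q>3\cdot32^2$ in the budget check.
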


Two elementary ingredients precede the proof.  The first controls how rapidly the modulus can change with its radius.  It is a consequence of convexity, rather than an additional regularity assumption.

\begin{lemma}[Scaling of the convex Hellinger modulus]
\label{lem:modulus-scaling}
For $0<\eps_1\le\eps_2$,
\begin{equation}
\label{eq:modulus-scaling}
\omega_{\mathcal X}(\eps_1)
\le
\omega_{\mathcal X}(\eps_2)
\le
\left(\frac{\eps_2}{\eps_1}\right)^2
\omega_{\mathcal X}(\eps_1).
\end{equation}
Consequently, replacing an argument of the modulus by any fixed positive multiple changes its value by at most a fixed multiplicative factor.
\end{lemma}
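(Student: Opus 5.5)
The left inequality is immediate from the definition \eqref{eq:abstract-modulus}: enlarging the radius enlarges the feasible set of pairs, so the supremum can only grow. The substance is the right inequality, and I will get it by shrinking pairs along segments toward each other, using convexity of $\mathcal X$.

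Fix $0<\eps_1\le\eps_2$ and take any pair $x=(p,\tau)$, $x'=(p',\tau')$ in $\mathcal X$ with $h(p,p')\le\eps_2$. Set $\lambda:=(\eps_1/\eps_2)^2\in(0,1]$ and form the convex combination
\[
x_\lambda:=(1-\lambda)x+\lambda x'=\bigl((1-\lambda)p+\lambda p',\,(1-\lambda)\tau+\lambda\tau'\bigr)\in\mathcal X .
\]
The target gap of the new pair $(x,x_\lambda)$ is exactly $\lambda|\tau-\tau'|$, because $\tau$ enters affinely. It then suffices to show $h(p,p_\lambda)\le\eps_1$, where $p_\lambda:=(1-\lambda)p+\lambda p'$. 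Granting this, $\lambda|\tau-\tau'|\le\omega_{\mathcal X}(\eps_1)$. Taking the supremum over admissible pairs gives $\omega_{\mathcal X}(\eps_2)\le\lambda^{-1}\omega_{\mathcal X}(\eps_1)=(\eps_2/\eps_1)^2\omega_{\mathcal X}(\eps_1)$, as claimed.

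The only step needing care is the Hellinger estimate along a mixture segment. I will use the bound
\[
h^2(p,(1-\lambda)p+\lambda p')\le\lambda\,h^2(p,p').
\]
Here is why it holds. Under the unnormalized convention $h^2=\sum(\sqrt{p_k}-\sqrt{q_k})^2$, we have $h^2(p,q)=2-2\sum\sqrt{p_kq_k}$. So the bound is equivalent to
\[
\sum_k\sqrt{p_k\bigl((1-\lambda)p_k+\lambda p'_k\bigr)}\ \ge\ (1-\lambda)+\lambda\sum_k\sqrt{p_kp'_k}.
\]
This follows coordinatewise from concavity of the square root: $\sqrt{(1-\lambda)p_k^2+\lambda p_kp'_k}\ge(1-\lambda)p_k+\lambda\sqrt{p_kp'_k}$. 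Summing over $k$ and using $\sum_kp_k=1$ gives the inequality. Hence $h(p,p_\lambda)\le\sqrt\lambda\,\eps_2=\eps_1$.

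The final sentence of the lemma follows by applying \eqref{eq:modulus-scaling} with whichever of the two radii is larger. Replacing an argument $\eps$ by $c\eps$ changes the modulus by a factor at most $\max\{c,c^{-1}\}^2$. I expect no real obstacle: the one nontrivial input is the concavity bound above, which is a one-line Jensen argument. The key point to respect is that the pair must be contracted toward one of its own endpoints, not toward an arbitrary point, so that both members remain in $\mathcal X$.
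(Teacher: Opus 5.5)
Your proof is correct and is essentially the paper's argument: contract an $\eps_2$-admissible pair inside the convex set $\mathcal X$ by the factor $\lambda=(\eps_1/\eps_2)^2$, which scales the target gap by $\lambda$ and the Hellinger distance by at most $\sqrt\lambda$. The only difference is cosmetic. The paper contracts both points toward their midpoint and cites joint convexity of $h^2$, whereas you contract toward one endpoint and check the needed special case directly via concavity of the square root. Your closing remark therefore overstates matters slightly: contracting toward any common point of $\mathcal X$, such as the midpoint, works equally well.
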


\begin{proof}
The first inequality follows because the set of pairs admissible at radius $\eps_1$ is contained in the set admissible at radius $\eps_2$.  For the second, take any $x_i=(p_i,\tau_i)\in\mathcal X$, $i=0,1$, satisfying $h(p_0,p_1)\le\eps_2$.  Let $\overline x=(x_0+x_1)/2$, and set
\[
\lambda=\left(\frac{\eps_1}{\eps_2}\right)^2,
\qquad
x_i^\lambda=(1-\lambda)\overline x+\lambda x_i,
\quad i=0,1.
\]
Convexity keeps $x_0^\lambda,x_1^\lambda$ in $\mathcal X$.  Their target separation is $\lambda|\tau_1-\tau_0|$, while affinity of the observation coordinate and joint convexity of squared Hellinger distance give
\[
h^2(p_0^\lambda,p_1^\lambda)
\le
\lambda h^2(p_0,p_1)
\le\eps_1^2.
\]
Therefore $\lambda|\tau_1-\tau_0|\le\omega_{\mathcal X}(\eps_1)$.  Dividing by $\lambda$ and taking the supremum over pairs admissible at radius $\eps_2$ proves \eqref{eq:modulus-scaling}.
\end{proof}

The second ingredient constructs one test that is uniformly valid over two convex sets of categorical laws.  This is stronger than testing only the pair of laws that are closest in Hellinger distance.

\begin{lemma}[Robust affinity test]
\label{lem:robust-affinity-test}
Let $\mathcal P_0,\mathcal P_1\subset\Delta_m$ be nonempty compact convex sets, and put
\[
r_*
:=
\inf\{h(p,q):p\in\mathcal P_0,\ q\in\mathcal P_1\}.
\]
For every integer $k\ge1$, there is a possibly randomized test $\phi$ based on observations $Y_1,\dots,Y_k$ such that
\begin{equation}
\label{eq:robust-test-error}
\sup_{p\in\mathcal P_0}\E_{p^{\otimes k}}\phi
\ \vee\
\sup_{q\in\mathcal P_1}\E_{q^{\otimes k}}(1-\phi)
\le
\left(1-\frac{r_*^2}{2}\right)^k
\le
\exp\left(-\frac{k r_*^2}{2}\right),
\end{equation}
where $\phi=1$ means choosing $\mathcal P_1$.
\end{lemma}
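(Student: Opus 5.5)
The plan is a minimax (Le Cam--Sion) argument combined with the Bhattacharyya-coefficient likelihood-ratio test. Write $\rho(p,q):=\sum_x\sqrt{p_xq_x}=1-h^2(p,q)/2$, using the paper's normalization $h^2=\sum(\sqrt p-\sqrt q)^2$. For product measures, $\rho(p^{\otimes k},q^{\otimes k})=\rho(p,q)^k$. Fix $p\in\mathcal P_0$ and $q\in\mathcal P_1$, and use the test $\phi=\one\{\prod_i q(Y_i)/p(Y_i)>1\}$, breaking ties by randomization. Then $\E_{p^{\otimes k}}\phi+\E_{q^{\otimes k}}(1-\phi)\le\sum\min(p^{\otimes k},q^{\otimes k})\le\rho(p,q)^k$, since $\min(a,b)\le\sqrt{ab}$. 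This controls a single pair. The difficulty is making one test work uniformly over both convex sets.

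\textbf{Step 1: the closest pair.} By compactness, $h$ is continuous and attains its infimum at some pair $(p_*,q_*)$ with $h(p_*,q_*)=r_*$. Equivalently, this pair maximizes the affinity $\rho$ over $\mathcal P_0\times\mathcal P_1$. The function $\rho$ is jointly concave, since $(a,b)\mapsto\sqrt{ab}$ is concave on $\R_+^2$.

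\textbf{Step 2: uniformity via minimax.} Consider the game
\[
\inf_{\phi}\sup_{p\in\mathcal P_0,\,q\in\mathcal P_1}\Bigl\{\E_{p^{\otimes k}}\phi+\E_{q^{\otimes k}}(1-\phi)\Bigr\}.
\]
Here $\phi$ ranges over the convex, compact set $[0,1]^{\mathcal X^k}$ of randomized tests, and the payoff is linear in $\phi$. The payoff is not concave in $(p,q)$, because $p\mapsto p^{\otimes k}$ is not affine. I would therefore let the adversary play mixtures: replace $(p,q)$ by priors $(\pi_0,\pi_1)$ on $\mathcal P_0\times\mathcal P_1$. The payoff is then bilinear in $(\phi,\pi)$, and Sion's theorem applies because the prior space is weakly compact. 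This gives
\[
\inf_\phi\sup=\sup_{\pi_0,\pi_1}\inf_\phi\bigl\{\cdots\bigr\}=\sup_{\pi_0,\pi_1}\Bigl(1-\TV\bigl(\bar P_{\pi_0},\bar P_{\pi_1}\bigr)\Bigr),
\]
where $\bar P_{\pi_i}=\int p^{\otimes k}\,\dd\pi_i$.

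\textbf{Step 3: bounding the mixed affinity (main obstacle).} It remains to show $\rho(\bar P_{\pi_0},\bar P_{\pi_1})\le(1-r_*^2/2)^k$ for mixtures of product measures. This is the step I expect to be hardest. I would argue by induction on $k$ via sequential conditioning. Write the joint density as a product of predictive conditionals $\bar P_{\pi}(y_i\mid y_{<i})=\int p(y_i)\,\dd\pi(p\mid y_{<i})$, the mixture of the posterior. By convexity of $\mathcal P_0$, this predictive distribution lies in $\mathcal P_0$, and likewise for $\mathcal P_1$. Applying Cauchy--Schwarz coordinate by coordinate,
\[
\rho(\bar P_{\pi_0},\bar P_{\pi_1})=\sum_{y_{<k}}\sqrt{\bar P_{\pi_0}(y_{<k})\bar P_{\pi_1}(y_{<k})}\;\rho\bigl(\bar P_{\pi_0}(\cdot\mid y_{<k}),\bar P_{\pi_1}(\cdot\mid y_{<k})\bigr).
\]
The last factor is at most $1-r_*^2/2$ because both conditionals lie in the respective sets. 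Induction then gives the bound. Combined with $1-\TV\le\rho$, this yields a sum-of-errors bound of $(1-r_*^2/2)^k$. That is enough for the max in \eqref{eq:robust-test-error}, since a sum bounds each term, and the minimax test is attained by compactness of the test set. Finally, $1-x\le e^{-x}$ gives the exponential form. If convexity of the predictive distributions failed to deliver this, I would fall back on Le Cam's classical result that the distance between convex hulls of product measures is controlled by the closest pair. The conditioning argument above is exactly the proof of that result.
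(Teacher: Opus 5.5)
Your proof is correct, but it takes a different route from the paper's. The paper does not use a minimax theorem. It takes the affinity-maximizing pair $(p_0,q_1)$ and uses concavity of $\rho$ in each argument to obtain the first-order conditions
\[
\sum_i p_i\sqrt{q_{1i}/p_{0i}}\le\rho(p_0,q_1)\quad\text{for all } p\in\mathcal P_0,
\]
and symmetrically for $\mathcal P_1$. It then bounds each error of the explicit likelihood-ratio test between $p_0^{\otimes k}$ and $q_1^{\otimes k}$ by Markov's inequality applied to the square-root likelihood ratio, whose expectation factorizes as $\bigl[\sum_i p_i\sqrt{q_{1i}/p_{0i}}\bigr]^k\le\rho(p_0,q_1)^k$. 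Because these conditions divide by $p_{0i}$, the paper first assumes strictly positive coordinates. It then removes that assumption by mixing with the uniform law and passing to a limit of tests.

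Your route is Le Cam's classical one. Minimax duality reduces the problem to bounding the affinity of mixtures of product laws. The sequential factorization, together with the fact that posterior predictive laws stay in $\mathcal P_0$ and $\mathcal P_1$, gives the $k$-fold bound by induction.

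What your approach buys:
\begin{itemize}
\item There is no positivity or smoothing step.
\item The conclusion is stronger. Your minimax test satisfies
\[
\sup_{p\in\mathcal P_0}\E_{p^{\otimes k}}\phi+\sup_{q\in\mathcal P_1}\E_{q^{\otimes k}}(1-\phi)\le(1-r_*^2/2)^k,
\]
whereas the paper's explicit test controls each error only separately, so its sum is bounded by $2(1-r_*^2/2)^k$.
\end{itemize}
What it costs: the test is non-constructive, and you need a minimax theorem. You can avoid weak compactness of priors entirely. The set $\{\bar P_\pi\}$ is the convex hull of the compact set $\{p^{\otimes k}:p\in\mathcal P_0\}$ inside $\Delta_{m^k}$, which by Carath\'eodory is already attained by finitely supported priors. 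So von Neumann's finite-dimensional theorem suffices. With finitely supported priors, each predictive law is a finite convex combination of elements of $\mathcal P_0$. It therefore lies in $\mathcal P_0$ by convexity alone; with general Borel priors you would also need closedness, for the barycenter argument.

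Two minor remarks. First, Step 1 is never used: you only need $\rho(p,q)\le 1-r_*^2/2$ for every pair, which is immediate from the definition of $r_*$. Second, the displayed identity in Step 3 is an exact factorization of $\sqrt{ab}$ into marginal and conditional parts, not an application of Cauchy--Schwarz.
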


\begin{proof}
First suppose that every distribution in $\mathcal P_0\cup\mathcal P_1$ has strictly positive coordinates.  Let $(p_0,q_1)$ maximize the Hellinger affinity
\[
\rho(p,q):=\sum_{i=1}^m\sqrt{p_iq_i}
\]
over $\mathcal P_0\times\mathcal P_1$.  For fixed $q_1$, the map $p\mapsto\rho(p,q_1)$ is concave.  First-order optimality at $p_0$ along every segment from $p_0$ to $p\in\mathcal P_0$ gives
\begin{equation}
\label{eq:affinity-first-order-p}
\sum_{i=1}^m p_i\sqrt{\frac{q_{1i}}{p_{0i}}}
\le
\sum_{i=1}^m p_{0i}\sqrt{\frac{q_{1i}}{p_{0i}}}
=\rho(p_0,q_1).
\end{equation}
Similarly, for every $q\in\mathcal P_1$,
\begin{equation}
\label{eq:affinity-first-order-q}
\sum_{i=1}^m q_i\sqrt{\frac{p_{0i}}{q_{1i}}}
\le\rho(p_0,q_1).
\end{equation}
Use the likelihood-ratio test between $p_0^{\otimes k}$ and $q_1^{\otimes k}$, namely,
$\phi = \one \left\{\prod_{j=1}^k q_1(Y_j)/p_0(Y_j)\ge1 \right\}$.  For $p\in\mathcal P_0$, Markov's inequality and \eqref{eq:affinity-first-order-p} yield
\begin{align*}
\E_{p^{\otimes k}}\phi
\;\le\;
\E_{p^{\otimes k}}
\prod_{j=1}^k
\left(\frac{q_1(Y_j)}{p_0(Y_j)}\right)^{1/2}
=
\left[
\sum_{i=1}^m p_i\sqrt{\frac{q_{1i}}{p_{0i}}}
\right]^k
\le\rho(p_0,q_1)^k.
\end{align*}
The inverse likelihood ratio and \eqref{eq:affinity-first-order-q} give the same bound for the other error.  Since maximizing affinity is equivalent to minimizing Hellinger distance,
\[
\rho(p_0,q_1)=1-\frac12r_*^2,
\]
which proves \eqref{eq:robust-test-error} under strict positivity.

To remove that assumption, let $u$ be the uniform law and, for $\eta\in(0,1)$, set
\[
\mathcal P_j^\eta
:=
\{(1-\eta)p+\eta u:p\in\mathcal P_j\},
\qquad j=0,1.
\]
Apply the preceding construction to obtain a randomized test $\phi_\eta$, viewed as a vector in $[0,1]^{m^k}$.  Along a sequence $\eta_\ell\downarrow0$, compactness gives coordinatewise convergence to a randomized test $\phi$.  Uniform continuity of Hellinger affinity shows that the maximal affinities of the smoothed sets converge to the maximal affinity of the original sets.  For each fixed $p\in\mathcal P_0$ and $q\in\mathcal P_1$, the corresponding smoothed product laws converge in total variation to $p^{\otimes k}$ and $q^{\otimes k}$.  Together with coordinatewise convergence of the tests on the finite sample space, this implies convergence of their testing risks.  Passing to the limit proves the result on the boundary of the simplex.
\end{proof}

\begin{proof}[Proof of \cref{thm:modulus-upper}]
Let
\[
\tau_-:=\min_{(p,\tau)\in\mathcal X}\tau,
\qquad
\tau_+:=\max_{(p,\tau)\in\mathcal X}\tau,
\qquad
D:=\tau_+-\tau_-.
\]
Because all categorical laws are within Hellinger distance $\sqrt2$, we have $D=\omega_{\mathcal X}(\sqrt2)$.  Set
\[
A:=32,
\qquad
\eps_0:=\frac{A}{\sqrt Q},
\qquad
\delta:=\omega_{\mathcal X}(\eps_0).
\]
Our goal is to show an estimator that has expected absolute error at most $7\delta/2$, uniformly over $\mathcal X$.
If $D=0$, the target is constant, and the corresponding constant estimator achieves zero error.  If $\eps_0\ge\sqrt2$, then $\delta=D$, and the midpoint of $[\tau_-,\tau_+]$ has error at most
$D/2 = \delta/2$.
We may therefore assume $\eps_0<\sqrt2$.  The scaling lemma gives
\begin{equation}
\label{eq:abstract-diameter-delta}
D
\le
\frac{2\delta}{\eps_0^2}.
\end{equation}
In particular, $\delta=0$ would imply $D=0$.  If $\delta\ge D/6$, the midpoint again has error at most $D/2\le3\delta$.  It remains to consider
\begin{equation}
\label{eq:nontrivial-search-case}
0<\delta<D/6,
\end{equation}
which, together with \eqref{eq:abstract-diameter-delta}, implies $Q>3A^2$.

We now construct a sample-split interval-search estimator.  It proceeds through stages $s=0,\dots,S-1$ and maintains a nested sequence of intervals $I_s$ intended to localize $\tau$.  Start from $I_0=[\tau_-,\tau_+]$.  The observations are divided into $S$ disjoint blocks, one per stage, with block sizes specified below.  At stage $s$, write $I_s=[a_s,b_s]$ and $\ell_s=b_s-a_s$.  Using only block $s$, test the lower third of $I_s$ against the upper third through the composite hypotheses
\[
H_{s}^-:\ \tau\le a_s+\frac{\ell_s}{3}
\qquad\text{and}\qquad
H_{s}^+:\ \tau\ge a_s+\frac{2\ell_s}{3}.
\]
If the test chooses $H_{s}^-$, retain the lower two thirds, $I_{s+1}=[a_s,a_s+2\ell_s/3]$; if it chooses $H_{s}^+$, retain the upper two thirds, $I_{s+1}=[a_s+\ell_s/3,b_s]$.  The two candidate intervals overlap in the middle third of $I_s$.  Thus, when $\tau$ lies in that middle third, either test outcome retains it.

The interval lengths are deterministic and satisfy
\[
\ell_s=\left(\frac23\right)^sD.
\]
Stop at the first integer $S$ for which $\ell_S\le6\delta$, and return the midpoint $\widehat\tau$ of $I_S$.  It remains to choose the stagewise tests and block sizes.  For $s<S$, define
\[
g_s:=\frac{\ell_s}{3},
\qquad
r_s^2:=\frac{\eps_0^2\ell_s}{6\delta},
\qquad
u_s:=\frac{\ell_s}{\delta}>6.
\]
Here $g_s$ is the separation between the two outer thirds, while $u_s$ measures the current interval length in units of the target accuracy $\delta$.  Since $\ell_s>6\delta$, we have $r_s\ge\eps_0$, while \eqref{eq:abstract-diameter-delta} gives $r_s^2\le1/3<2$.  Thus $\eps_0\le r_s<\sqrt2$, and \cref{lem:modulus-scaling}, applied at radii $\eps_0$ and $r_s$, gives
\begin{equation}
\label{eq:search-separation}
\omega_{\mathcal X}(r_s)
\le
\left(\frac{r_s}{\eps_0}\right)^2\delta
=
\frac{\ell_s}{6}
=\frac{g_s}{2}
<g_s.
\end{equation}

For every possible interval $I_s$, the observation laws $p$ generated under $H_s^-$ and $H_s^+$ form two compact convex sets.  Indeed, each is the image, under the coordinate projection $(p,\tau)\mapsto p$, of the intersection of $\mathcal X$ with the corresponding closed halfspace.
By \eqref{eq:search-separation}, every pair across these sets is separated by more than $r_s$ in Hellinger distance.  If either set is empty, use the corresponding deterministic decision.  Otherwise, apply the test supplied by \cref{lem:robust-affinity-test} using a fresh block of
\begin{equation}
\label{eq:search-allocation}
k_s
:=
\left\lceil
\frac{4}{r_s^2}\log u_s
\right\rceil
\end{equation}
observations.

The use of a fresh block makes the testing guarantee valid conditional on the preceding outcomes.  Let $\mathcal F_s$ be the sigma-field generated by blocks $0,\dots,s-1$ and any auxiliary randomization used through stage $s-1$, so that $I_s$ is $\mathcal F_s$-measurable.  Call the update at stage $s$ \emph{unsafe} if $I_{s+1}$ does not contain $\tau$, and let $A_s$ be the event that all preceding updates are safe.  Since $I_0$ contains $\tau$ and the intervals are nested, $A_s=\{\tau\in I_s\}$.  On $A_s$, either $\tau$ lies in the middle third, in which case an unsafe update is impossible, or the true law $p$ belongs to one of the two tested composite hypotheses.  Since block $s$ is independent of $\mathcal F_s$ and has law $p^{\otimes k_s}$ conditional on that history, the robust test bound gives
\begin{equation}
\label{eq:search-stage-error}
\Pp_p(\tau\notin I_{s+1}\mid\mathcal F_s)
\le
\exp\left(-\frac{k_s r_s^2}{2}\right)
\le
u_s^{-2}
=
\left(\frac{\delta}{\ell_s}\right)^2
\qquad\text{on }A_s.
\end{equation}

We now bound the risk of this estimator.  Let $E_s:=A_s\cap\{\tau\notin I_{s+1}\}$ be the event that the first unsafe update occurs at stage $s$.  If no update is unsafe, then $\tau\in I_S$ and $|\widehat\tau-\tau|\le\ell_S/2\le3\delta$.  On $E_s$, all preceding updates were safe, so $\tau\in I_s$ immediately before the unsafe update.  Every later interval, and hence $\widehat\tau$, remains inside $I_s$; therefore the loss on $E_s$ is at most $\ell_s$.  Moreover, \eqref{eq:search-stage-error} gives
\[
\Pp_p(E_s)
=
\E_p\left[
\one_{A_s}
\Pp_p\{\tau\notin I_{s+1}\mid\mathcal F_s\}
\right]
\le
\left(\frac{\delta}{\ell_s}\right)^2.
\]
The events $E_s$ are disjoint, and consequently
\begin{align*}
\sup_{(p,\tau)\in\mathcal X}
\E_p|\widehat\tau-\tau|
&\le
3\delta
+
\sum_{s=0}^{S-1}
\ell_s\left(\frac{\delta}{\ell_s}\right)^2.
\end{align*}
Finally, $\ell_{S-1}>6\delta$ and the interval lengths form a geometric sequence, so
\[
\sum_{s=0}^{S-1}\frac1{\ell_s}
\le
\frac1{\ell_{S-1}}
\sum_{j=0}^\infty\left(\frac23\right)^j
<\frac1{2\delta}.
\]
Thus the expected loss is at most $7\delta/2$.

It remains to verify that the planned independent blocks fit within the original sample.  From \eqref{eq:search-allocation},
\begin{equation}
\label{eq:allocation-sum-start}
\sum_{s=0}^{S-1}k_s
\le
S+\frac{24Q}{A^2}
\sum_{s=0}^{S-1}\frac{\log u_s}{u_s}.
\end{equation}
Minimality of $S$ gives $6<u_{S-1}\le9$.  Writing $j=S-1-s$, we have $u_s=u_{S-1}(3/2)^j$, and hence
\begin{align}
\sum_{s=0}^{S-1}\frac{\log u_s}{u_s}
 \le
\frac16\sum_{j=0}^\infty
\left(\frac23\right)^j
\left[\log9+j\log\left(\frac32\right)\right]
=
\log\left(\frac92\right).
\label{eq:allocation-geometric-sum}
\end{align}
Also,
\[
S
\le
1+\log_{3/2}\left(\frac{D}{6\delta}\right)
\le
1+\frac{D}{2\delta}
\le
1+\frac{Q}{A^2},
\]
where the second inequality uses $\log_{3/2}x\le3x$ for $x\ge1$.  Combining this with \cref{eq:allocation-sum-start,eq:allocation-geometric-sum} gives
\[
\sum_{s=0}^{S-1}k_s
\le
1+\frac{Q}{A^2}
+\frac{24Q}{A^2}\log\left(\frac92\right)
<Q,
\]
using $A=32$ and $Q>3A^2$.  Any unused observations are ignored.  The estimator is therefore feasible with $Q$ observations, which completes the proof of \eqref{eq:modulus-upper}.
\end{proof}

\subsection{Proof of the scalar minimax characterization}

\begin{proof}[Proof of \cref{thm:scalar-modulus}]
By \cref{lem:margin-class-properties}, the set
\[
\mathcal X_n
:=
\{(K_nG,\theta(G)):G\in\calG_{\beta,L}\}
\subset\Delta_{n+1}\times[-1,1]
\]
is compact and convex, and both coordinates are continuous affine functions of $G$.  Its indirect modulus is exactly \eqref{eq:modulus}.  Applying \cref{thm:modulus-upper} with $\mathcal X=\mathcal X_n$, followed by \cref{lem:modulus-scaling}, gives
\[
\calR^{\mathrm{abs}}_{Q,n}(\beta,L)
\le
C\,\omega_{n,\beta,L}(Q^{-1/2})
\]
for a universal constant $C$.  This proves the rightmost inequality in \eqref{eq:scalar-modulus-chain}; the pointwise reduction \eqref{eq:dec-le-abs} gives the middle inequality.

It remains to prove the lower bound for decision regret.  Fix  $\eps=(2\sqrt Q)^{-1}$
and take any $G,H\in\calG_{\beta,L}$ satisfying $h(K_nG,K_nH)\le\eps$.  After interchanging $G$ and $H$ if necessary, write
\[
\delta:=\theta(G)-\theta(H)\ge0.
\]
The original targets may have the same sign, in which case the same side is optimal under both distributions and the pair need not be difficult for the decision problem.  Reflection symmetrization produces a pair with opposite optimal decisions.  Specifically, define
\[
G_+:=\frac12(G+H^\dagger),
\qquad
G_-:=\frac12(H+G^\dagger).
\]
By \cref{lem:margin-class-properties}, $G_+,G_-\in\calG_{\beta,L}$ and
\[
\theta(G_+)=\frac\delta2,
\qquad
\theta(G_-)=-\frac\delta2.
\]
Writing $P_+=K_nG_+$ and $P_-=K_nG_-$, joint convexity of squared Hellinger distance and invariance under reversal of the count labels give
\begin{align}
h^2(P_+,P_-)
&\le
\frac12h^2(K_nG,K_nH)
+
\frac12h^2(K_nH^\dagger,K_nG^\dagger)
\notag\\
&=
h^2(K_nG,K_nH)
\le\eps^2.
\label{eq:symmetrized-hellinger}
\end{align}

For probability measures $P$ and $R$, write
\[
\rho(P,R):=\sum_x\sqrt{P(x)R(x)}
=1-\frac12h^2(P,R)
\]
for their Hellinger affinity.  Affinity factorizes over product measures, so
\begin{align}
h^2(P^{\otimes Q},R^{\otimes Q})
&=
2\{1-\rho(P,R)^Q\}\notag\\
&=
2\left\{1-\left(1-\frac12h^2(P,R)\right)^Q\right\}
\le
Qh^2(P,R).
\label{eq:product-hellinger}
\end{align}
The last inequality is Bernoulli's inequality.  Under the present Hellinger convention, Cauchy--Schwarz also gives
\begin{align}
\TV(P,R)
&=
\frac12\sum_x
|\sqrt{P(x)}-\sqrt{R(x)}|
(\sqrt{P(x)}+\sqrt{R(x)})\notag\\
&\le
h(P,R)\sqrt{1-\frac14h^2(P,R)}
\le h(P,R).
\label{eq:TV-le-hellinger}
\end{align}
Combining \eqref{eq:symmetrized-hellinger}--\eqref{eq:TV-le-hellinger} with $\eps=(2\sqrt Q)^{-1}$ yields
\begin{equation}
\label{eq:product-TV-small}
\TV(P_+^{\otimes Q},P_-^{\otimes Q})
\le\frac12.
\end{equation}

Now fix any, possibly randomized, decision rule $\widehat\sigma$.  Because a wrong decision costs $\delta/2$ under either $G_+$ or $G_-$ (recall \eqref{eq:scalar-decision-loss}),
\begin{align}
\sup_{F\in\calG_{\beta,L}}
\E_F\mathcal L_F(\widehat\sigma)
&\ge
\frac12\left\{
\E_{G_+}\mathcal L_{G_+}(\widehat\sigma)
+
\E_{G_-}\mathcal L_{G_-}(\widehat\sigma)
\right\}\notag\\
&=
\frac\delta4\left\{
P_+^{\otimes Q}(\widehat\sigma=-1)
+
P_-^{\otimes Q}(\widehat\sigma=+1)
\right\}\notag\\
&\ge
\frac\delta4
\left\{1-\TV(P_+^{\otimes Q},P_-^{\otimes Q})\right\}
\ge
\frac\delta8.
\label{eq:decision-le-cam}
\end{align}
The penultimate inequality is Le Cam's two-point inequality.  Since \eqref{eq:decision-le-cam} holds for every admissible pair $G,H$ and every decision rule, taking first the infimum over $\widehat\sigma$ and then the supremum over all pairs in the definition of the modulus gives
\[
\calR^{\mathrm{dec}}_{Q,n}(\beta,L)
\ge
\frac18\,
\omega_{n,\beta,L}\!\left(\frac{1}{2\sqrt Q}\right).
\]
Finally, \cref{lem:modulus-scaling} gives
\[
\omega_{n,\beta,L}\!\left(\frac{1}{2\sqrt Q}\right)
\ge
\frac14\omega_{n,\beta,L}(Q^{-1/2}),
\]
which proves the lower bound in \eqref{eq:scalar-modulus-chain}.
\end{proof}

\section{Finite-sample scalar bounds}
\label{app:finite-lower}

\begin{proof}[Proof of \cref{thm:rootQ-scalar}]
Take the two distributions $G_{\sigma,\eta}$ displayed after the proposition.  Conditional on $Z=+1$, the count is deterministically $S=n$, while conditional on $Z=-1$, it is $S=0$.  Hence $K_nG_{+1,\eta}$ assigns masses $(1-\eta)/2$ and $(1+\eta)/2$ to $0$ and $n$, respectively, while $K_nG_{-1,\eta}$ interchanges these masses.  Therefore
\[
h^2(K_nG_{+1,\eta},K_nG_{-1,\eta})
=
2-2\sqrt{1-\eta^2}
\le
2\eta^2.
\]
For $0<\eps\le1$, take $\eta=\eps/2$.  The two channel laws are then within Hellinger distance $\eps$, whereas
\[
|\theta(G_{+1,\eta})-\theta(G_{-1,\eta})|
=2\eta=\eps.
\]
Both measures belong to $\calG_{\beta,L}$ for every $\beta>0$ and $L\ge1$, proving \eqref{eq:rootQ-modulus}.  The minimax conclusion \eqref{eq:rootQ-scalar} follows from the lower bound in \cref{thm:scalar-modulus}.
\end{proof}

\subsection{An elementary rare-boundary construction}
\label{app:rare-boundary}

The following construction places a small fraction of the population near the decision boundary, with a common orientation that can be detected only by pooling its weak count evidence.

\begin{theorem}[Pooled rare-boundary lower bound]
\label{thm:pooled-lower}
For every fixed $\beta>0$ and $L\ge1$, there are constants $c_0,c_1>0$ such that, for every $n\ge1$ and $0<\eps\le c_0$,
\begin{equation}
\label{eq:pooled-modulus-lower}
\omega_{n,\beta,L}(\eps)
\ge
c_1\left(\frac{\eps^2}{n}\right)^{\beta/(\beta+2)}.
\end{equation}
Consequently, there is a constant $c_{\beta,L}>0$ such that
\begin{equation}
\label{eq:pooled-lower}
\calR^{\mathrm{dec}}_{Q,n}(\beta,L)
\ge
c_{\beta,L}(Qn)^{-\beta/(\beta+2)}
\end{equation}
for every $Q,n\ge1$.
\end{theorem}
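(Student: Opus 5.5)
The plan is to exhibit, for each small $\eps$, an explicit reflected pair in $\calG_{\beta,L}$ and then invoke \cref{thm:scalar-modulus}. The construction is a symmetric ``bulk'' plus a small near-boundary population with a common orientation. Fix a width $t\in(0,1]$ and a mass $\pi$, to be tuned, and set
\[
G_\pm=(1-\pi)\,\tfrac12(\delta_{-1}+\delta_{+1})+\pi\,\delta_{\pm t}.
\]
The bulk has $\theta=0$, so $\theta(G_\pm)=\pm\pi$ and the target separation is $2\pi$.

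First I check membership in the margin class. For $s<t$ we have $G_\pm([-s,s])=0$. For $t\le s<1$ we have $G_\pm([-s,s])=\pi$. So it suffices that $\pi\le Lt^\beta$, and I take $\pi=t^\beta$, using $L\ge1$.

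Next I bound the Hellinger distance between the count laws. Joint convexity of $h^2$, with the common bulk cancelling, gives
\[
h^2(K_nG_+,K_nG_-)\le\pi\,h^2\!\bigl(\Bin(n,\tfrac{1+t}2),\Bin(n,\tfrac{1-t}2)\bigr).
\]
Strictly, this is the convexity bound applied to the mixture weights $(1-\pi,\pi)$ against identical bulk components. Hellinger distance tensorizes, and for two Bernoulli laws $h^2(\Ber(\tfrac{1+t}2),\Ber(\tfrac{1-t}2))=2-2\sqrt{1-t^2}\le 2t^2$. Bernoulli's inequality as in \eqref{eq:product-hellinger} then gives a binomial Hellinger bound of at most $2nt^2$. (The binomial statistic is a function of the Bernoulli sequence, so data processing applies.) Hence $h^2\le 2nt^{\beta+2}$.

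Then I optimize. I choose $t=(\eps^2/(2n))^{1/(\beta+2)}$, which is at most $1$ once $\eps\le c_0$. The pair is then within Hellinger radius $\eps$, and
\[
|\theta(G_+)-\theta(G_-)|=2t^\beta=2(\eps^2/(2n))^{\beta/(\beta+2)}.
\]
This proves \eqref{eq:pooled-modulus-lower} with $c_1=2\cdot2^{-\beta/(\beta+2)}$. For the minimax statement I take $\eps=c_0Q^{-1/2}$ and use \cref{lem:modulus-scaling} to pass to radius $Q^{-1/2}$, at the cost of a factor $c_0^2$. The lower bound of \cref{thm:scalar-modulus} then gives $(Qn)^{-\beta/(\beta+2)}$ up to constants depending on $\beta,L$. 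This covers $Q\ge1$ uniformly, since $c_0Q^{-1/2}\le c_0$.

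The main obstacle is getting the Hellinger bound to scale as $\pi nt^2$ rather than $\sqrt\pi$ or $\pi\cdot\min\{1,nt^2\}$ in a weaker form. The mixture-convexity step handles this cleanly, so I expect the difficulty to be minor. The only care needed is that the regime $nt^2\ll1$ is automatic from the choice of $t$ when $\eps$ is small.
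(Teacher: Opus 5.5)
Your proposal is correct and follows the paper's proof: the same symmetric bulk at $\pm1$ plus an atom of mass $t^\beta$ at $\pm t$, the same choice $t\asymp(\eps^2/n)^{1/(\beta+2)}$, and the same appeal to \cref{thm:scalar-modulus}. The paper instead bounds $\KL$ by augmentation and data processing (which needs $m\le1/2$) and then uses $h^2\le\KL$, whereas you bound $h^2$ directly by joint convexity and Bernoulli tensorization; also, passing from radius $c_0Q^{-1/2}$ to $Q^{-1/2}$ with $c_0\le1$ needs only monotonicity of the modulus, not \cref{lem:modulus-scaling}, and costs no factor.
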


\begin{proof}[Proof of \cref{thm:pooled-lower}]
Consider the fixed family
\[
\mathfrak G_\beta
:=
\{G_{\sigma,m}:\sigma\in\{-1,+1\},\ 0<m\le1/2\},
\qquad
G_{\sigma,m}
=
\frac{1-m^\beta}{2}(\delta_{-1}+\delta_{+1})
+m^\beta\delta_{\sigma m}.
\]
For $0<t<m$ there is no mass in $[-t,t]$, while for $m\le t<1$ the mass is $m^\beta\le t^\beta$.  Hence the family lies in $\calG_{\beta,L}$, independently of $Q,n$, and $\theta(G_{\sigma,m})=\sigma m^\beta$.

Let $P_{\sigma,m}=K_nG_{\sigma,m}$.  Augmenting the observation by revealing whether the latent draw came from the central component and then applying data processing gives
\[
\KL(P_{+1,m}\|P_{-1,m})
\le
m^\beta
\KL\!\left(
\Bin\left(n,\frac{1+m}{2}\right)
\middle\|
\Bin\left(n,\frac{1-m}{2}\right)
\right)
\le 4nm^{\beta+2},
\]
where the last inequality uses
\[
\KL\!\left(
\Ber\left(\frac{1+m}{2}\right)
\middle\|
\Ber\left(\frac{1-m}{2}\right)
\right)
=
m\log\frac{1+m}{1-m}
\le4m^2
\]
for $m\le1/2$.  Since squared Hellinger distance is bounded by Kullback--Leibler divergence, taking
\[
m
=
\left(\frac{\eps^2}{16n}\right)^{1/(\beta+2)}
\]
for $\eps\le c_0(\beta)$ ensures both $m\le1/2$ and $h(P_{+1,m},P_{-1,m})\le\eps$.  Meanwhile,
\[
|\theta(G_{+1,m})-\theta(G_{-1,m})|
=2m^\beta
\asymp_\beta
\left(\frac{\eps^2}{n}\right)^{\beta/(\beta+2)}.
\]
This proves \eqref{eq:pooled-modulus-lower}.  Its minimax consequence follows from \cref{thm:scalar-modulus}, monotonicity of the modulus, and a fixed reduction of the constant multiplying $Q^{-1/2}$ when necessary.
\end{proof}

The lower bound describes a shared weak orientation among a rare set of near-boundary questions.  Unlike sign-then-aggregate, the optimal test for this pair pools the weak count information.  Combined with the two endpoint lower bounds, it gives
\begin{equation}
\label{eq:explicit-lower-envelope}
\calR^{\mathrm{dec}}_{Q,n}(\beta,L)
\gtrsim_{\beta,L}
\max\left\{
Q^{-1/2},\,
n^{-\beta},\,
(Qn)^{-\beta/(\beta+2)}
\right\}
\end{equation}
for every $Q,n\ge1$.

The rare-boundary term can exceed both endpoint terms only when $0<\beta<1$, in the intermediate range
\[
n^{2\beta/(2-\beta)}
\lesssim Q
\lesssim n^{\beta+1}.
\]
To compare it with the spectral obstruction, let $r_{\mathrm{pool}}=(Qn)^{-\beta/(\beta+2)}$ and $r_{\mathrm{sp}}=(ny_{Q,n})^{-\beta/2}$.  Direct calculation gives $r_{\mathrm{sp}}\ge r_{\mathrm{pool}}$ whenever
\[
Q
\gtrsim
n^{\beta/2}y_{Q,n}^{(\beta+2)/2}.
\]
For $0<\beta<1$, this condition holds throughout the asymptotic region in which the rare-boundary term can exceed both $Q^{-1/2}$ and $n^{-\beta}$.  Thus the moment-canceling alternatives in \cref{sec:spectral-middle}, rather than the single rare weak atom, determine the stronger known middle-regime obstruction.

\subsection{Proof of the plug-in bound}
\label{app:plugin-proof}

\begin{proof}[Proof of \cref{thm:plugin-scalar}]
Let $(Z,S,\widehat\xi)$ denote a generic copy of $(Z_j,S_j,\widehat\xi_j)$, where $\widehat\xi=\one\{S>n/2\}-\one\{S<n/2\}$.  Then $\E_G\widehat\theta_{\mathrm{plug}}=\E_G\widehat\xi$, and the bias--fluctuation decomposition gives
\[
\E_G|\widehat\theta_{\mathrm{plug}}-\theta(G)|
\le
|\E_G\widehat\xi-\theta(G)|
+
\E_G|\widehat\theta_{\mathrm{plug}}-\E_G\widehat\theta_{\mathrm{plug}}|.
\]
Because the $\widehat\xi_j$ are iid and bounded by one, the fluctuation term satisfies
\[
\E_G|\widehat\theta_{\mathrm{plug}}-\E_G\widehat\theta_{\mathrm{plug}}|
\le
\sqrt{\Var_G(\widehat\theta_{\mathrm{plug}})}
\le
Q^{-1/2}.
\]

It remains to control the bias.  The margin condition implies $G(\{0\})=0$, so the value of $\sgn$ at zero is immaterial.  For $Z\ne0$, the pointwise inequality $|\widehat\xi-\sgn(Z)|\le2\one\{\widehat\xi\ne\sgn(Z)\}$ gives
\begin{align*}
|\E_G\widehat\xi-\theta(G)|
&\le
\E_G\bigl|\E[\widehat\xi\mid Z]-\sgn(Z)\bigr|\\
&\le
\E_G\E\bigl[|\widehat\xi-\sgn(Z)|\mid Z\bigr]\\
&\le
2\,\E_G\Pp\{\widehat\xi\ne\sgn(Z)\mid Z\}.
\end{align*}
For $z\ne0$, conditional Hoeffding bounds the probability of an incorrect sign or a tie by
\[
\Pp_z\{\widehat\xi\ne\sgn(z)\}
\le
e^{-nz^2/2}.
\]
Consequently,
\begin{equation}
\label{eq:plugin-bias-preintegration}
|\E_G\widehat\xi-\theta(G)|
\le
2\,\E_Ge^{-nZ^2/2}.
\end{equation}

To integrate this bound, put $a=n^{-1/2}$ and split the range of $|Z|$ into
\[
A_{-1}:=\{z\in[-1,1]:|z|\le a\},
\qquad
A_j:=\{z\in[-1,1]:2^ja<|z|\le2^{j+1}a\},
\quad j\ge0.
\]
The margin condition, together with the trivial probability bound once $2^{j+1}a>1$, gives
\[
G(A_{-1})\le Ln^{-\beta/2},
\qquad
G(A_j)\le L2^{(j+1)\beta}n^{-\beta/2}.
\]
On $A_j$, one has $e^{-nZ^2/2}\le e^{-2^{2j-1}}$.  Therefore
\begin{align*}
\E_Ge^{-nZ^2/2}
&\le
Ln^{-\beta/2}
\left[
1+2^\beta\sum_{j\ge0}2^{j\beta}e^{-2^{2j-1}}
\right]
\;\le
C_{\beta,L}n^{-\beta/2},
\end{align*}
because the series converges.  Combining this estimate with \eqref{eq:plugin-bias-preintegration} and the fluctuation bound proves \eqref{eq:plugin-scalar}.  Finally, \eqref{eq:pointwise-decision-reduction} gives the same upper bound for the regret of $\widehat\sigma_{\mathrm{plug}}$.
\end{proof}

\section{Transfer from count-only to observed-question experiments}
\label{app:observed-transfer}

The transfer proof has two auxiliary steps.  First, we approximate each
scalar mixing distribution by an equal-weight discrete measure that remains
in the same margin class while preserving the target and observable count
law asymptotically.  Second, we realize such a measure as the pushforward of
a deterministic question-to-advantage map by assigning its atoms to a fine
equal-mass partition of the question space.  A uniform random permutation of
this assignment produces an observed-question experiment that, after
averaging, is close to the corresponding count-only experiment; the
discrepancy is controlled by the probability of cell collisions.  We apply
these constructions to a reflected pair from the Hellinger modulus and use
the resulting permutation priors to prove the minimax lower bound.  Finally,
embedding the two informative models among $K-2$ neutral models transfers the
bound to the general ensemble problem.

\subsection{Equal-weight discretization}

\begin{lemma}[Equal-weight discretization preserving the margin class]
\label{lem:equal-weight-discretization}
For every $G\in\calG_{\beta,L}$ and every integer $M\ge1$, there is an equal-weight measure
\[
G^{(M)}
=
\frac1M\sum_{r=1}^M\delta_{z_r}
\in\calG_{\beta,L}
\]
such that
\[
|\theta(G^{(M)})-\theta(G)|\le M^{-1},
\qquad
G^{(M)}\Rightarrow G
\quad\text{as }M\to\infty.
\]
Consequently, for every fixed $n$,
\[
\TV(K_nG^{(M)},K_nG)\to0.
\]
If $H$ is the reflection of $G$ under $z\mapsto-z$, the approximations may be chosen so that $H^{(M)}$ is the reflection of $G^{(M)}$.
\end{lemma}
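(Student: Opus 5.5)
Following the outward-quantile recipe sketched before \cref{thm:observed-transfer}, I will work with the quantile function $F^{-1}(u)=\inf\{z:G([-1,z])\ge u\}$ of $G$. Partition $(0,1]$ into the $M$ blocks $J_r=((r-1)/M,r/M]$. The block $J_r$ pushes forward to a piece of $G$ of mass $1/M$ supported on $[x_r,y_r]$, where $x_r=F^{-1}(((r-1)/M)+)$ and $y_r=F^{-1}(r/M)$. Since $G(\{0\})=0$, at most one block has $x_r<0<y_r$. I define the atom $z_r$ as the endpoint of $[x_r,y_r]$ farther from zero: $z_r=y_r$ if $x_r\ge0$, $z_r=x_r$ if $y_r\le0$, and for the single straddling block I choose whichever endpoint has larger absolute value. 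Then $G^{(M)}=M^{-1}\sum_r\delta_{z_r}$.

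\textbf{Margin class.} This is the key check. The map sends each block's mass outward, so every mass element at position $z$ moves to a point $z'$ with $|z'|\ge|z|$. For the blocks lying on one side of zero this holds by construction. For the straddling block, the piece on the side opposite to the chosen endpoint is flipped in sign, but its modulus is at most the larger endpoint modulus, so the inequality still holds. Consequently there is a coupling $(Z,Z')$ of $G$ and $G^{(M)}$ with $|Z'|\ge|Z|$ almost surely. This gives $G^{(M)}([-t,t])=\Pp(|Z'|\le t)\le\Pp(|Z|\le t)\le Lt^\beta$, so $G^{(M)}\in\calG_{\beta,L}$.

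\textbf{Target.} Under the coupling, $\sgn(Z')=\sgn(Z)$ except on the straddling block, which has mass at most $1/M$. Using $G(\{0\})=0$, and noting that $z_r=0$ can only occur with null effect, I obtain $|\theta(G^{(M)})-\theta(G)|\le2/M$. To reach the stated constant $M^{-1}$ I will refine the straddling block: the mass it misassigns is only the portion on the side opposite the chosen endpoint. Choosing the endpoint on the side carrying at least half of that block's mass makes the misassigned part at most $1/(2M)$, so the error is at most $2\cdot\frac{1}{2M}=M^{-1}$. The tension is that the outward property then requires the chosen endpoint to have larger modulus, not the side with more mass. If this conflict cannot be resolved I will instead split that block, using a randomized or sub-block version with two atoms at the two endpoints; this breaks equal weights, so I will repair it by using a finer partition. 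I expect reconciling the outward property with the exact constant $M^{-1}$ to be the main obstacle. If the exact constant is not needed downstream, a bound of $2/M$ suffices, since only the limit $M\to\infty$ is used later.

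\textbf{Weak convergence and reflection.} Every block other than the straddling one and the extreme blocks has $y_r-x_r\to0$ uniformly except on a vanishing set. More simply, I will use $|Z'-Z|\le$ the oscillation of the quantile function over its block, and $\int_0^1|F^{-1}(\lceil uM\rceil/M)-F^{-1}(u)|\,\dd u\to0$ for a monotone bounded function. Together these give $W_1(G^{(M)},G)\to0$, hence $G^{(M)}\Rightarrow G$. Because each $b_{k,n}((1+z)/2)$ is continuous, $K_nG^{(M)}\to K_nG$ pointwise on the finite set $\{0,\dots,n\}$, which gives convergence in total variation. For the reflection $H=G^\dagger$, I will apply the same rule with ties in the straddling block broken symmetrically. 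Since the quantile blocks of $H$ are the reflected blocks of $G$ in reversed order, this yields $H^{(M)}=(G^{(M)})^\dagger$.
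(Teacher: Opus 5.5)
Your coupling argument for membership in $\calG_{\beta,L}$ is correct: every mass element moves to a point of no smaller modulus, so $G^{(M)}([-t,t])\le G([-t,t])$. The $W_1$ route to weak convergence also works, since the intervals $[x_r,y_r]$ overlap only at endpoints and hence $\E|Z'-Z|\le 4/M$, and so does the passage to total variation.

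The gap is the stated bound $|\theta(G^{(M)})-\theta(G)|\le M^{-1}$. As written, your construction only gives $2/M$, and neither proposed repair works:
\begin{itemize}
\item Splitting the straddling block, or refining the partition, leaves the form $M^{-1}\sum_{r=1}^M\delta_{z_r}$ with exactly $M$ atoms. The statement requires that form for every given $M$.
\item A randomized choice of endpoint does not produce a fixed measure at all.
\end{itemize}
Falling back to $2/M$ would suffice downstream, since the transfer theorem only uses $\theta(G^{(M)})\to\theta(G)$. It is nonetheless a weaker statement than the lemma.

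The missing observation is that the margin constraint involves only $|z|$, so the straddling atom need not sit at an endpoint of its block. Suppose that block carries mass $a$ on $[x_r,0)$ and $b=M^{-1}-a$ on $(0,y_r]$. Place its atom at $\sigma\max\{|x_r|,|y_r|\}$, with $\sigma=-1$ if $a\ge b$ and $\sigma=+1$ otherwise. Your coupling still satisfies $|Z'|\ge|Z|$. The only sign errors come from the minority part of that block, so $|\theta(G^{(M)})-\theta(G)|=2\min\{a,b\}\le M^{-1}$. Weak convergence is unaffected, because this block has mass $1/M$.

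The paper avoids the straddling block altogether. It builds outward-quantile blocks separately on each side, starting from zero, with $k_\pm=\lfloor Mp_\pm\rfloor$ complete blocks. All rounding therefore happens at the outer ends, where the single extra atom is placed at $\pm1$; that position is outward for every mass element. Taking $m_+$ to be the nearest integer to $Mp_+$ then gives $G^{(M)}((0,t])=\lfloor MF_+(t)\rfloor/M$ and the $M^{-1}$ bound directly.

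For the reflection claim you need no symmetric tie-breaking. Simply define $H^{(M)}:=(G^{(M)})^\dagger$, as the paper does. It inherits all the required properties because $\calG_{\beta,L}$ is reflection invariant and $\theta$ changes sign under reflection.
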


\begin{proof}[Proof of Lemma~\ref{lem:equal-weight-discretization}]
Because $G\in\calG_{\beta,L}$, $G(\{0\})=0$.  Write
\[
p_+=G((0,1]),
\qquad
p_-=G([-1,0)),
\qquad
p_++p_-=1,
\]
and define the one-sided small-ball functions
\[
F_+(t)=G((0,t]),
\qquad
F_-(t)=G([-t,0)),
\qquad 0<t\le1.
\]

The construction treats the two sides of zero separately.  Starting at
zero, divide as much of the mass on each side as possible into blocks of
mass $1/M$, and replace each block by an atom at its endpoint farther from
zero.  Any atom needed to round the number of blocks is placed at $+1$ or
$-1$.  Moving mass outward in this way is what preserves the margin bound.

To make this precise, choose an integer $m_+$ nearest to $Mp_+$ and put
$m_-=M-m_+$.  These are the numbers of atoms assigned to the positive and
negative sides.  They satisfy
\begin{equation}
\label{eq:side-mass-rounding}
\left|\frac{m_+}{M}-p_+\right|\le\frac1{2M},
\qquad
\left|\frac{m_-}{M}-p_-\right|\le\frac1{2M}.
\end{equation}
Let
\[
k_+=\lfloor Mp_+\rfloor,
\qquad
k_-=\lfloor Mp_-\rfloor,
\qquad
e_+=m_+-k_+,
\qquad
e_-=m_--k_-.
\]
Here $k_\pm$ count the complete mass-$1/M$ blocks and $e_\pm$ account for
the possible rounding atom.  Since $Mp_++Mp_-=M$, we have
$e_+,e_-\in\{0,1\}$ and $e_++e_-\in\{0,1\}$.  For
$j=1,\dots,k_+$ and $j=1,\dots,k_-$, respectively, define the outward
quantiles
\[
x_j^+
=
\inf\{t\in(0,1]:F_+(t)\ge j/M\},
\qquad
x_j^-
=
\inf\{t\in(0,1]:F_-(t)\ge j/M\}.
\]
Thus $x_j^+$ and $-x_j^-$ are the endpoints farther from zero of the
$j$th cumulative blocks on the two sides.  Set
\begin{equation}
\label{eq:outward-quantile-measure}
G^{(M)}
=
\frac1M
\left\{
\sum_{j=1}^{k_+}\delta_{x_j^+}
+
\sum_{j=1}^{k_-}\delta_{-x_j^-}
+
e_+\delta_{+1}
+
e_-\delta_{-1}
\right\}.
\end{equation}
For $0<t<1$, the generalized-inverse property gives
$x_j^+\le t$ if and only if $j/M\le F_+(t)$.  Consequently,
$\#\{j:x_j^+\le t\}=\lfloor MF_+(t)\rfloor$; the same argument applies
on the negative side.  Because the possible extra atoms at $\pm1$ are not
counted, we obtain
\begin{equation}
\label{eq:outward-quantile-subdistributions}
G^{(M)}((0,t])
=
\frac{\lfloor MF_+(t)\rfloor}{M},
\qquad
G^{(M)}([-t,0))
=
\frac{\lfloor MF_-(t)\rfloor}{M}.
\end{equation}
In particular, the discretization places no more mass near zero than the
original distribution:
\[
G^{(M)}([-t,t])
\le
F_+(t)+F_-(t)
=
G([-t,t])
\le
Lt^\beta.
\]
At $t=1$, the required inequality is $1\le L$.  Hence $G^{(M)}\in\calG_{\beta,L}$ with exactly the same margin constant.

The positive mass of $G^{(M)}$ is $m_+/M$, so \eqref{eq:side-mass-rounding} gives
\[
|\theta(G^{(M)})-\theta(G)|
=
2\left|\frac{m_+}{M}-p_+\right|
\le M^{-1}.
\]
Equation~\eqref{eq:outward-quantile-subdistributions} shows directly that,
for every $0<t<1$,
\[
0\le F_+(t)-G^{(M)}((0,t])<\frac1M,
\qquad
0\le F_-(t)-G^{(M)}([-t,0))<\frac1M.
\]
At $t=1$, the corresponding errors are at most $1/(2M)$ by
\eqref{eq:side-mass-rounding}.  Thus the one-sided distribution functions
of $G^{(M)}$ converge to those of $G$, which implies
$G^{(M)}\Rightarrow G$.  Each coordinate of $K_nG^{(M)}$ is the
integral of a bounded continuous polynomial in $z$, so weak convergence
implies that $K_nG^{(M)}$ converges coordinatewise to $K_nG$.  Since the
output alphabet is finite, this implies
\[
\TV(K_nG^{(M)},K_nG)\to0.
\]
Finally, if $H$ is the reflection of $G$, define $H^{(M)}$ to be the reflection of $G^{(M)}$.
\end{proof}

\subsection{Random-permutation coupling}

\begin{lemma}[Random-permutation embedding]
\label{lem:permutation-embedding}
Let $G_M=M^{-1}\sum_{r=1}^M\delta_{z_r}$ be an equal-weight measure.  Partition $[0,1]$ into $M$ intervals $I_1,\dots,I_M$ of equal length.  Draw a uniform random permutation $\Pi$ of $[M]$ and define the deterministic map
\[
Z_\Pi(q)=z_{\Pi(r)},
\qquad q\in I_r.
\]
Let $\overline{\mathbb P}^{Q,n}_{G_M,M}$ be the law of the full sample in \eqref{eq:full-observed-scalar} after averaging over $\Pi$.  Then
\begin{equation}
\label{eq:permutation-TV}
\TV\!\left(
\overline{\mathbb P}^{Q,n}_{G_M,M},
\lambda^{\otimes Q}\otimes(K_nG_M)^{\otimes Q}
\right)
\le
3\alpha_{Q,M},
\qquad
\alpha_{Q,M}
:=
1-\frac{(M)_Q}{M^Q}
\le
\frac{Q(Q-1)}{2M},
\end{equation}
where $(M)_Q=M(M-1)\cdots(M-Q+1)$, with $(M)_Q=0$ when $Q>M$.
\end{lemma}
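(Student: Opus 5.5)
The plan is to compare both laws to a common intermediate law on the event that the $Q$ sampled cells are distinct. Write the observed-question sample as $(q_j,S_j)_{j\le Q}$, and let $R_j\in[M]$ be the index of the cell containing $q_j$. Under either law, $q_1,\dots,q_Q$ are iid uniform on $[0,1]$, so $(R_1,\dots,R_Q)$ are iid uniform on $[M]$. Conditional on the cell index, the position of $q_j$ inside its cell is uniform and independent of everything else. It therefore suffices to compare the joint laws of $(R_j,S_j)_{j\le Q}$, since the within-cell positions are an identical independent factor in both.

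Under the target law $\lambda^{\otimes Q}\otimes(K_nG_M)^{\otimes Q}$, the count $S_j$ is $\Bin(n,(1+z_{\rho_j})/2)$ for an independent uniform atom index $\rho_j\in[M]$. The $\rho_j$ are iid uniform and independent of the $R_j$. Under the permutation law, $S_j\sim\Bin(n,(1+z_{\Pi(R_j)})/2)$, conditionally independent across $j$ given $(\Pi,R)$.

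The key observation is this: conditional on the event $D=\{R_1,\dots,R_Q\text{ distinct}\}$, the vector $(\Pi(R_1),\dots,\Pi(R_Q))$ is a uniformly random injective $Q$-tuple in $[M]$, independent of $R$. Under the product law, conditional on $D'=\{\rho_1,\dots,\rho_Q\text{ distinct}\}$, the tuple $(\rho_j)$ is also a uniform injective tuple independent of $R$. Introduce the intermediate law $\mathbb P^\circ$ in which $R$ is iid uniform and, independently, the atom indices form a uniform injective tuple, with the binomial counts generated from them.

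\textbf{Permutation side.} The permutation law and $\mathbb P^\circ$ agree on the $R$-marginal. They also have the same conditional law given $R\in D$. Hence their total variation is at most $\Pp(D^c)=\alpha_{Q,M}$, because both can be coupled to coincide on $D$.

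\textbf{Product side.} The product law is a mixture with weights $\Pp(D')$ and $\Pp(D'^c)$ of the conditional laws given $D'$ and given $D'^c$. Its conditional law given $D'$ is exactly $\mathbb P^\circ$. The mixture therefore differs from $\mathbb P^\circ$ in total variation by at most $\Pp(D'^c)=\alpha_{Q,M}$.

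The triangle inequality then gives a bound of $2\alpha_{Q,M}\le3\alpha_{Q,M}$. The stated constant $3$ leaves slack, for instance if one chooses to condition on $D\cap D'$ jointly.

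Finally, $\alpha_{Q,M}=1-\prod_{i<Q}(1-i/M)\le\sum_{i<Q}i/M=Q(Q-1)/(2M)$ by the union bound over pairs. When $Q>M$ we have $\alpha_{Q,M}=1$, and the bound holds trivially.

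The main obstacle is making the conditioning step rigorous on the continuous question space. I would handle it by the factorization above: the within-cell positions are independent uniforms under both laws, so the total variation reduces to the finite space of $(R_j,S_j)$. There the mixture and coupling bounds are elementary.
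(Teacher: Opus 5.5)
Your proof is correct and is essentially the paper's argument. Both augment the sample with latent atom indices, observe that on the no-collision event the assigned indices form a uniform injective tuple independent of the cells, and pay the collision probabilities. The one difference is your intermediate law $\mathbb P^\circ$, which leaves the cell indices unconditioned. As a result the product side costs only $\Pp(D'^c)=\alpha_{Q,M}$, rather than the paper's $1-(1-\alpha_{Q,M})^2$ from conditioning on both index vectors being distinct, which gives you the slightly sharper bound $2\alpha_{Q,M}$.
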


\begin{proof}[Proof of Lemma~\ref{lem:permutation-embedding}]
If $Q>M$, the right side of \eqref{eq:permutation-TV} is at least one and the claim is trivial.  Assume henceforth that $Q\le M$.

We compare two joint laws on the common augmented variables
$(q,R,J,S)=((q_j,R_j,J_j,S_j))_{j=1}^Q$.  Under the
\emph{permutation law}, first draw $R_1,\dots,R_Q$ iid uniformly from
$[M]$ and, conditional on $R$, draw the $q_j$ independently and uniformly
within $I_{R_j}$.  Independently draw a uniform permutation $\Pi$ of
$[M]$, set $J_j=\Pi(R_j)$, and then draw the counts conditionally
independently according to
\[
S_j\mid J_j
\sim
\Bin\!\left(n,\frac{1+z_{J_j}}2\right).
\]
This construction generates $q_1,\dots,q_Q\overset{\mathrm{iid}}{\sim}
\lambda$, and its $(q,S)$-marginal is
$\overline{\mathbb P}^{Q,n}_{G_M,M}$.

Under the \emph{iid law}, generate $(R,q)$ in the same way, but draw
$J_1,\dots,J_Q$ iid uniformly from $[M]$, independently of $(R,q)$, and
then use the same conditional distribution of $S$ given $J$.  Its
$(q,S)$-marginal is
$\lambda^{\otimes Q}\otimes(K_nG_M)^{\otimes Q}$.  Since projection onto
$(q,S)$ cannot increase total variation, it is enough to compare these
two augmented laws.

Let
\[
\mathcal D_{Q,M}
:=
\{(r_1,\dots,r_Q)\in[M]^Q:r_1,\dots,r_Q\text{ are distinct}\}.
\]
The uniform law on $\mathcal D_{Q,M}$ is exactly the law obtained by
drawing $Q$ labels uniformly without replacement from $[M]$, with the
$j$th coordinate recording the label drawn for observation $j$.

Let $A:=\{R\in\mathcal D_{Q,M}\}$ under the permutation law.  Conditional
on $A$, the vector $R$ is uniform on $\mathcal D_{Q,M}$.  For every fixed
$r\in\mathcal D_{Q,M}$, the vector
$\Pi(r):=(\Pi(r_1),\dots,\Pi(r_Q))$ is also uniform on
$\mathcal D_{Q,M}$, and its distribution does not depend on $r$.
Consequently, under the permutation law conditional on $A$, the vectors
$R$ and $J$ are independent and each is uniform on
$\mathcal D_{Q,M}$.

Under the iid law, let
$E:=\{R\in\mathcal D_{Q,M},\ J\in\mathcal D_{Q,M}\}$.  Since $R$ and $J$
are independent and each has iid uniform coordinates before conditioning,
conditional on $E$ they remain independent and each is uniform on
$\mathcal D_{Q,M}$.  Given $(R,J)$, both laws draw $q_j$ uniformly within
$I_{R_j}$ and generate $S_j$ through the same binomial channel indexed by
$J_j$.  Hence their conditional laws agree exactly:
\[
\mathcal L_{\mathrm{perm}}(q,R,J,S\mid A)
=
\mathcal L_{\mathrm{iid}}(q,R,J,S\mid E).
\]

The probability of a collision in one iid sample of $Q$ indices is
\[
\alpha_{Q,M}
=
1-\frac{(M)_Q}{M^Q}
\le
\binom Q2\frac1M.
\]
Using the elementary inequality $\TV(P,P(\cdot\mid B))\le P(B^c)$ and the triangle inequality,
\begin{align*}
\TV(\mathcal L_{\mathrm{perm}}(q,R,J,S),
     \mathcal L_{\mathrm{iid}}(q,R,J,S))
&\le
\pr_{\mathrm{perm}}(A^c)
+
\pr_{\mathrm{iid}}(E^c)\\
&=
\alpha_{Q,M}
+
\{1-(1-\alpha_{Q,M})^2\}\\
&\le3\alpha_{Q,M}.
\end{align*}
Projecting both augmented laws onto $(q,S)$ now proves
\eqref{eq:permutation-TV}.
\end{proof}

\subsection{Transfer theorem and general-ensemble embedding}

\begin{proof}[Proof of Theorem~\ref{thm:observed-transfer}]
The upper inequality is \eqref{eq:obs-le-count}.  We prove the lower inequality.

We first record the Bayes reduction that guides the construction.  Suppose
$\nu_+$ and $\nu_-$ are finitely supported priors on
$\mathfrak Z_{\beta,L}$ such that, for some $b>0$,
$\theta_Z=\pm b$ for every $Z\in\operatorname{supp}(\nu_\pm)$,
respectively.  Let $\overline{\mathbb P}_+$ and
$\overline{\mathbb P}_-$ denote the corresponding prior-averaged laws of
the full observed sample.  Then every decision rule $\widehat\sigma$
satisfies
\begin{align}
\sup_{Z\in\mathfrak Z_{\beta,L}}
\E_Z\mathcal L_Z(\widehat\sigma)
&\ge
\frac b2
\left\{
\overline{\mathbb P}_+(\widehat\sigma=-1)
+
\overline{\mathbb P}_-(\widehat\sigma=+1)
\right\}\notag\\
&\ge
\frac b2
\left\{1-\TV(\overline{\mathbb P}_+,\overline{\mathbb P}_-)\right\}.
\label{eq:transfer-bayes-reduction}
\end{align}
The first inequality bounds the maximum risk below by its average under
the prior that first chooses the sign uniformly and then draws $Z$ from
the corresponding $\nu_\pm$; the second is the usual two-point
testing inequality.  It therefore suffices to construct two such priors
with $b$ comparable to the modulus and whose prior-averaged data laws
have total variation bounded away from one.

Fix $\varepsilon_Q:=(64\sqrt Q)^{-1}$ and take any
$0<\Delta<\omega_{n,\beta,L}(\varepsilon_Q)$.  By the reflection
symmetrization used in the proof of Theorem~\ref{thm:scalar-modulus},
there exist $G_+,G_-\in\calG_{\beta,L}$ such that
\begin{equation}
\label{eq:transfer-hard-pair}
G_-=G_+^\dagger,
\qquad
\theta(G_+)=a,
\qquad
\theta(G_-)=-a,
\qquad
a>\Delta/2,
\qquad
h(K_nG_+,K_nG_-)\le\varepsilon_Q,
\end{equation}
where $\dagger$ denotes reflection under $z\mapsto-z$.

Apply Lemma~\ref{lem:equal-weight-discretization} to $G_+$ and reflect the
result to obtain equal-weight measures $G^{(M)}_+$ and $G^{(M)}_-$.  Write
\[
P^{(M)}_\pm:=K_nG^{(M)}_\pm,
\qquad
a_M:=\theta(G^{(M)}_+).
\]
Because the targets and channel laws converge, and because $M$ may be
taken arbitrarily large, choose $M$ so that
\begin{equation}
\label{eq:transfer-M-choices}
a_M\ge\Delta/4,
\qquad
h(P^{(M)}_+,K_nG_+)\le\varepsilon_Q,
\qquad
6\alpha_{Q,M}\le\frac18.
\end{equation}
Reflection gives the analogous channel bound for the negative hypothesis
and $\theta(G^{(M)}_-)=-\theta(G^{(M)}_+)$.  The triangle inequality gives
\[
h(P^{(M)}_+,P^{(M)}_-)
\le3\varepsilon_Q.
\]
Consequently, the product-Hellinger and total-variation inequalities
\eqref{eq:product-hellinger} and \eqref{eq:TV-le-hellinger} give
\begin{equation}
\label{eq:transfer-product-TV}
\TV\!\left(
(P^{(M)}_+)^{\otimes Q},
(P^{(M)}_-)^{\otimes Q}
\right)
\le
\frac{3}{64}.
\end{equation}

Each discretized alternative is now lifted to a family of deterministic
question-to-advantage maps.  For each sign, let $\nu_\pm$ be the uniform
permutation prior from Lemma~\ref{lem:permutation-embedding}, constructed
using $G^{(M)}_\pm$.  Every map in the support of $\nu_\pm$ belongs to
$\mathfrak Z_{\beta,L}$ and has target $\pm a_M$.  Denote the corresponding
prior-averaged full-data laws by $\overline{\mathbb P}_+$ and
$\overline{\mathbb P}_-$.  The permutation lemma gives
\[
\TV\!\left(
\overline{\mathbb P}_\pm,
\lambda^{\otimes Q}\otimes(P^{(M)}_\pm)^{\otimes Q}
\right)
\le3\alpha_{Q,M}.
\]
Since adjoining the common factor $\lambda^{\otimes Q}$ does not change
total variation, the triangle inequality and
\eqref{eq:transfer-product-TV} yield
\[
\TV(\overline{\mathbb P}_+,\overline{\mathbb P}_-)
\le
6\alpha_{Q,M}+\frac{3}{64}
<\frac12.
\]

Applying \eqref{eq:transfer-bayes-reduction} with $b=a_M$ and then taking
the infimum over $\widehat\sigma$ gives
\[
\calR^{\mathrm{obs}}_{Q,n}(\beta,L)
\ge
\frac{a_M}{4}
\ge
\frac{\Delta}{16}.
\]
Since this holds for every
$0<\Delta<\omega_{n,\beta,L}(\varepsilon_Q)$,
this proves \eqref{eq:observed-transfer} with the stated universal constants.
\end{proof}

\begin{proof}[Proof of Corollary~\ref{cor:general-lower-transfer}]
In this submodel the clean margin of $w$ is
\[
M_q(w)=(w_1-w_2)Z(q).
\]
Put $p_+:=\Pp\{Z>0\}$ and $p_-:=\Pp\{Z<0\}$.  The margin condition
implies that $Z$ has no mass at zero, so $p_++p_-=1$ and
$\theta_Z=p_+-p_-$.  Accordingly, the risk of $w$ is $p_-$ when
$w_1>w_2$, $p_+$ when $w_1<w_2$, and one when $w_1=w_2$.  Hence
$\inf_{w\in\Delta_K}R_Z(w)=(1-|\theta_Z|)/2$, attained at $e_1$ when
$\theta_Z>0$ and at $e_2$ when $\theta_Z<0$.  At either choice,
\[
\Pp\{0<M_q(w^\star)\le t\}
\le
(Z_\#\lambda)([-t,t])
\le Lt^\beta.
\]
Thus the embedded environments satisfy the one-sided margin condition at
$w^\star$ used in the general theorem.

Given any estimator $\widehat w$, define the induced scalar decision
$\widehat\sigma$ to be $+1$ when $\widehat w_1\ge\widehat w_2$ and $-1$
otherwise.  If $\widehat w_1\ne\widehat w_2$, its clean excess risk equals
$|\theta_Z|$ when $\widehat\sigma$ chooses the wrong side and is zero
otherwise.  If $\widehat w_1=\widehat w_2$, the ensemble margin is zero
on every question, so $R_Z(\widehat w)=1$ and its excess risk is
$(1+|\theta_Z|)/2\ge|\theta_Z|$.
Therefore, pointwise in the data,
\[
R_Z(\widehat w)-\inf_wR_Z(w)
\ge
|\theta_Z|\one\{\widehat\sigma\theta_Z<0\}.
\]
For models 1 and 2, $S_j$ is sufficient for their raw generations; the
generations of models $i\ge3$ have a law independent of $Z$.  Thus the
induced $\widehat\sigma$ is statistically equivalent to a possibly
randomized rule based on $((q_j,S_j))_{j=1}^Q$, and such randomization is
already allowed in \eqref{eq:minimax-observed}.  Taking expectations,
suprema, and infima proves the first inequality; the second follows from
Corollary~\ref{cor:observed-modulus}.
\end{proof}

\section{Proof of the identification floor}
\label{app:identification}

Recall that $\Pi_n$ denotes the space of real algebraic polynomials of degree at most $n$.

\subsection{Localized polynomial approximation and the upper bound}

\begin{lemma}[Localized Jackson polynomial]
\label{lem:jackson}
For every $s>0$, there is $C_s$ such that, for every integer $n\ge1$, there is $P_n\in\Pi_n$ satisfying
\begin{equation}
\label{eq:jackson-bound}
\norm{P_n}_{L_\infty[-1,1]}\le1,
\qquad
|P_n(x)-\sgn(x)|
\le
C_s\min\{1,(n|x|)^{-s}\}
\end{equation}
for every $x\in[-1,1]\setminus\{0\}$.
\end{lemma}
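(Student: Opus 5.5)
The plan is to build $P_n$ by convolving $\sgn$ with a normalized, nonnegative, highly concentrated trigonometric kernel in the angle variable. Write $x=\cos\phi$ with $\phi\in[0,\pi]$, and let $F(\phi):=\sgn(\cos\phi)$. This is the even $2\pi$-periodic function equal to $+1$ for $|\phi|<\pi/2$ and $-1$ for $\pi/2<|\phi|\le\pi$. Let $J_m(t)=c_{m,r}\bigl(\sin(mt/2)/\sin(t/2)\bigr)^{2r}$ be the generalized Jackson kernel. It is an even, nonnegative trigonometric polynomial of degree $r(m-1)$, normalized by $\frac1{2\pi}\int_{-\pi}^{\pi}J_m=1$. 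Choose $r\ge r(s)$ with $2r-1>s$, and $m=\lfloor n/r\rfloor+1$ (or the largest $m$ with $r(m-1)\le n$), so that $m\asymp n/r$. For small $n<r$ take $P_n\equiv0$, which is consistent with the constant $C_s$.

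Define $T(\phi):=\frac1{2\pi}\int F(\phi-t)J_m(t)\dd t$. Since $F$ and $J_m$ are even, $T$ is an even trigonometric polynomial of degree at most $n$. Hence $T(\phi)=P_n(\cos\phi)$ for some $P_n\in\Pi_n$. Because $J_m\ge0$ with unit mass and $|F|\le1$, we get $\norm{P_n}_\infty\le1$ immediately.

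For the pointwise bound, fix $x\ne0$ and let $\phi=\arccos x$, so that the distance from $\phi$ to the discontinuity set $\{\pm\pi/2\}$ is $\delta:=|\phi-\pi/2|\asymp|\arcsin x|\ge|x|$. Since $F$ is constant on the interval of radius $\delta$ about $\phi$,
\[
|T(\phi)-F(\phi)|\le\frac{2}{2\pi}\int_{\delta\le|t|\le\pi}J_m(t)\dd t.
\]
The standard kernel bounds $c_{m,r}\asymp m^{1-2r}$ and $J_m(t)\lesssim m^{1-2r}|t|^{-2r}$ then give a tail of order $(m\delta)^{1-2r}\lesssim_s(n|x|)^{-s}$, using $2r-1\ge s$ and $m\asymp n$. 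Combined with the trivial bound $2$, this yields $\min\{1,(n|x|)^{-s}\}$ up to $C_s$.

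The main obstacle is the kernel tail estimate: the normalizing constant $c_{m,r}\asymp m^{1-2r}$ (from $\int(\sin(mt/2)/\sin(t/2))^{2r}\asymp m^{2r-1}$) and the comparison $\delta\ge|x|$. Both are standard (Lorentz/DeVore), and the rest of the argument is bookkeeping.
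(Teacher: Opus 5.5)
Your proposal is correct and follows essentially the same route as the paper's proof. Both convolve $\sgn(\cos\vartheta)$ with a normalized generalized Jackson kernel of order $2r$ with $2r-1>s$, take $m=1+\lfloor n/r\rfloor$ so the degree $r(m-1)$ is at most $n$, pass to an algebraic polynomial via evenness, and bound the error by the kernel tail beyond the distance $\delta$ to the jump, using $|x|=\sin\delta\le\delta$. The only difference is cosmetic: your small-$n$ cutoff ($n<r$, versus the paper's $n<2r$) is immaterial, since both arguments handle the finitely many small cases by $P_n\equiv0$ and an enlarged constant.
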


\begin{proof}
The key idea is to smooth the discontinuous sign function with a positive, normalized, finite-frequency Jackson kernel.  The resulting convolution is a bounded trigonometric polynomial that closely follows the sign function away from its jumps; evenness then converts it into an algebraic polynomial in $x=\cos\vartheta$.

Let $f(\vartheta):=\sgn(\cos\vartheta)$ on the circle $\R/(2\pi\mathbb Z)$, and choose an integer $r$ such that $2r-1>s$.  For $m\ge2$, define
\[
J_{m,r}(u)
:=c_{m,r}
\left(\frac{\sin(mu/2)}{\sin(u/2)}\right)^{2r},
\]
where the quotient is interpreted continuously at zero.  The constant $c_{m,r}$ is chosen to give normalization $\frac1{2\pi} \int_{-\pi}^{\pi} J_{m,r}(u)\,\dd u=1$.

\emph{Kernel properties.}
Recall that the Fej\'er kernel satisfies
\[
F_m(u)
:=\frac1m\left(\frac{\sin(mu/2)}{\sin(u/2)}\right)^2
=
\sum_{k=-(m-1)}^{m-1}
\left(1-\frac{|k|}{m}\right)e^{iku}.
\]
Since $J_{m,r}=c_{m,r}m^rF_m^r$, it is nonnegative, even, and a trigonometric polynomial of degree $d:=r(m-1)$.

To determine its normalization scale, put $K_{m,r}^0(u):=(\sin(mu/2)/\sin(u/2))^{2r}$.  On $|u|\le1/m$, one has $K_{m,r}^0(u)\asymp_r m^{2r}$, while for $0<|u|\le\pi$, the inequalities $|\sin(u/2)|\ge|u|/\pi$ and $|\sin(mu/2)|\le1$ give $K_{m,r}^0(u)\lesssim_r|u|^{-2r}$.  Splitting the integral at $1/m$ therefore yields
\[
\frac1{2\pi}\int_{-\pi}^{\pi}K_{m,r}^0(u)\,\dd u
\asymp_r m^{2r-1},
\qquad
c_{m,r}\asymp_r m^{-(2r-1)}.
\]
Only the resulting upper bound on $c_{m,r}$ is needed below.  We next bound the tail.  If $a\le1/m$, normalized total mass bounds it by one; if $a>1/m$, the preceding pointwise and normalization bounds give
\[
\frac1{2\pi}\int_{|u|\ge a}J_{m,r}(u)\,\dd u
\lesssim_r
m^{-(2r-1)}\int_a^\pi u^{-2r}\,\dd u
\lesssim_r
(ma)^{-(2r-1)}.
\]
Thus, for every $0<a\le\pi$,
\begin{equation}
\label{eq:jackson-tail}
\frac1{2\pi}\int_{|u|\ge a}J_{m,r}(u)\,\dd u
\le
C_r\min\{1,(ma)^{-(2r-1)}\},
\qquad 0<a\le\pi.
\end{equation}

\emph{Finite-frequency smoothing.}
Define
\[
T_m(\vartheta)
:=
\frac1{2\pi}\int_{-\pi}^{\pi}
f(\vartheta-u)J_{m,r}(u)\,\dd u.
\]
Since $J_{m,r}\ge0$ and has normalized mass one, $T_m(\vartheta)$ is a weighted average of values in $[-1,1]$, so $|T_m(\vartheta)|\le1$.  Write $J_{m,r}(u)=\sum_{|k|\le d}j_ke^{iku}$.  The convolution identity gives
\[
T_m(\vartheta)
=
\sum_{|k|\le d}j_k\widehat f(k)e^{ik\vartheta},
\]
where $\widehat f(k)$ is the $k$th Fourier coefficient of $f$.  Thus $T_m$ is a trigonometric polynomial of degree at most $d$.  Because both $f$ and $J_{m,r}$ are even, so is $T_m$, and hence it has a cosine expansion $T_m(\vartheta)=a_0+\sum_{k=1}^da_k\cos(k\vartheta)$.  If $\mathsf T_k$ denotes the $k$th Chebyshev polynomial, define $P(x):=a_0+\sum_{k=1}^da_k\mathsf T_k(x)$.  Then $P$ has degree at most $d$ and
\[
T_m(\vartheta)=P(\cos\vartheta),
\qquad
\norm P_{L_\infty[-1,1]}\le1.
\]

\emph{Localized approximation error.}
Let $a(\vartheta)$ denote the circular distance from $\vartheta$ to the nearest discontinuity of $f$, namely $\pi/2$ or $3\pi/2$.  Whenever $|u|<a(\vartheta)$, the values $f(\vartheta-u)$ and $f(\vartheta)$ agree.  Therefore \eqref{eq:jackson-tail} implies
\[
\begin{aligned}
|T_m(\vartheta)-f(\vartheta)|
&\le
\frac2{2\pi}\int_{|u|\ge a(\vartheta)}J_{m,r}(u)\,\dd u\\
&\le
C_r\min\{1,(m a(\vartheta))^{-(2r-1)}\}.
\end{aligned}
\]
If $x=\cos\vartheta$, then $|x|=\sin a(\vartheta)\le a(\vartheta)$.  For $n\ge2r$, take $m=1+\lfloor n/r\rfloor$.  Then $d=r(m-1)\le n$ and $m\ge n/r$.  Since $2r-1>s$, the last display implies \eqref{eq:jackson-bound}, after enlarging the constant.  For the finitely many integers $1\le n<2r$, take $P_n\equiv0$ and enlarge $C_s$.
\end{proof}

The estimate in Lemma~\ref{lem:jackson} is intentionally nonuniform: it gives only a constant bound in the boundary layer $|x|\lesssim n^{-1}$, but outside that layer the error decays at any prescribed fixed polynomial order in $n|x|$.

\begin{lemma}[Integrated approximation under a margin condition]
\label{lem:integrated-approx}
If $G\in\calG_{\beta,L}$ and $s>\beta$ in \cref{lem:jackson}, then, for every $n\ge1$,
\begin{equation}
\label{eq:integrated-approx}
\int|P_n(z)-\sgn(z)|\,\dd G(z)
\le C_{\beta,L,s}n^{-\beta}.
\end{equation}
\end{lemma}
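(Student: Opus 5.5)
The plan is to integrate the pointwise bound of \cref{lem:jackson} against $G$ with a dyadic decomposition in $|z|$, exactly as in the proof of \cref{thm:plugin-scalar} but at scale $n^{-1}$ instead of $n^{-1/2}$. Since $G(\{0\})=0$ for $G\in\calG_{\beta,L}$, the point $z=0$, where \eqref{eq:jackson-bound} is not asserted, is irrelevant. Set $a:=n^{-1}$ and split $[-1,1]\setminus\{0\}$ into the boundary layer
\[
A_{-1}:=\{0<|z|\le a\}
\]
and the shells
\[
A_j:=\{2^ja<|z|\le 2^{j+1}a\},\qquad j\ge0 .
\]
Only finitely many shells meet $[-1,1]$.

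On $A_{-1}$, I will use the crude bound $|P_n-\sgn|\le C_s$ (indeed $\le2$). The margin condition gives $G(A_{-1})\le G([-a,a])\le L n^{-\beta}$. On $A_j$, one has $n|z|>2^j$, so $|P_n(z)-\sgn(z)|\le C_s2^{-js}$. For the mass, $G(A_j)\le G([-2^{j+1}a,2^{j+1}a])\le L2^{(j+1)\beta}n^{-\beta}$ whenever $2^{j+1}a\le1$. When $2^{j+1}a>1$, I use $G(A_j)\le1\le L\le L(2^{j+1}a)^\beta$, which is the same bound. Summing gives
\[
\int|P_n-\sgn|\,\dd G\le C_sLn^{-\beta}\Bigl[1+2^\beta\sum_{j\ge0}2^{j(\beta-s)}\Bigr],
\]
and the geometric series converges precisely because $s>\beta$. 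This yields \eqref{eq:integrated-approx} with $C_{\beta,L,s}=C_sL\bigl(1+2^\beta/(1-2^{\beta-s})\bigr)$.

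There is no real obstacle. The only points needing care are the truncation of the margin condition at $t\le1$, handled by $L\ge1$ as above, and the fact that the constant is uniform in $G$ and $n$, which holds because every bound used depends on $G$ only through the margin inequality.
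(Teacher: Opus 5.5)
Your proposal is correct and matches the paper's proof: the same boundary layer $|z|\le n^{-1}$ bounded via the margin condition, the same dyadic shells with pointwise error $C_s2^{-js}$ and mass at most $L2^{(j+1)\beta}n^{-\beta}$, and the same geometric series that converges because $s>\beta$. Your treatment of the shells beyond $|z|=1$ via $L\ge1$, and the use of $C_s$ rather than $2$ on the boundary layer, are only cosmetic differences.
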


\begin{proof}
For $|z|\le n^{-1}$,
 the integrand is bounded by two and the margin condition gives probability at most $Ln^{-\beta}$.  On the dyadic shell
\[
\frac{2^j}{n}<
|z|
\le \min\left\{\frac{2^{j+1}}n,1\right\},
\]
the pointwise error is at most $C_s2^{-js}$ and the shell probability is at most $L(2^{j+1}/n)^\beta$ until the upper endpoint reaches one.  Consequently,
\[
\int|P_n-\sgn|\,\dd G
\le
Ln^{-\beta}
\biggl[2+C_s2^\beta\sum_{j\ge0}2^{j(\beta-s)}\biggr]
\le
C_{\beta,L,s}n^{-\beta},
\]
because $s>\beta$, proving \eqref{eq:integrated-approx}.
\end{proof}

\begin{proof}[Proof of the upper bound in \cref{thm:identification-floor}]
Suppose $K_nG=K_nH$.  By \cref{lem:moment-equivalence}, $G$ and $H$ integrate every polynomial of degree at most $n$ equally.  Fix $s=\beta+1$ and let $P_n$ be the corresponding polynomial from \cref{lem:jackson}.  Then
\begin{align*}
|\theta(G)-\theta(H)|
&=
\left|
\int(\sgn-P_n)\,\dd G
-
\int(\sgn-P_n)\,\dd H
\right|\\
&\le
\int|\sgn-P_n|\,\dd G
+
\int|\sgn-P_n|\,\dd H \;\le\;
C_{\beta,L}n^{-\beta}
\end{align*}
by \cref{lem:integrated-approx}.  Taking the supremum over all pairs with $K_nG=K_nH$ proves the upper bound on $\omega_{n,\beta,L}(0)$.
\end{proof}

\subsection{Weighted \texorpdfstring{$L_1$}{L1} approximation of the jump}

Let
\[
\mu_\beta(\dd x)
=
\frac\beta2|x|^{\beta-1}\one_{[-1,1]}(x)\,\dd x.
\]
The next lemma gives the matching approximation rate.  Its upper bound follows from the localized Jackson construction above; the lower bound uses a weighted inverse theorem of \citet{Kopotun}.

\begin{lemma}[Weighted $L_1$ approximation of a jump]
\label{lem:weighted-sign}
There are $0<c_\beta<C_\beta<\infty$ such that
\begin{equation}
\label{eq:weighted-sign-rate}
c_\beta n^{-\beta}
\le
E_n(\beta)
:=
\inf_{P\in\Pi_n}
\int|\sgn(x)-P(x)|\,\dd\mu_\beta(x)
\le
C_\beta n^{-\beta}.
\end{equation}
\end{lemma}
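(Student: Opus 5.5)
The upper inequality comes directly from \cref{lem:jackson}. Take $s=\beta+1$ and the localized Jackson polynomial $P_n$. We then integrate $|P_n-\sgn|\le C_s\min\{1,(n|x|)^{-s}\}$ against $\mu_\beta$. The measure $\mu_\beta$ satisfies $\mu_\beta([-t,t])=t^\beta$ exactly, so it belongs to $\calG_{\beta,1}$, and \cref{lem:integrated-approx} gives $E_n(\beta)\le C_\beta n^{-\beta}$ at once.

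For the lower inequality I would argue by contradiction through an inverse (Bernstein-type) theorem, as the text announces. Suppose $E_n(\beta)\le \eta\, n^{-\beta}$ for some $n$, with $\eta$ small. The plan is to write $\mu_\beta$ as $w(x)\,\dd x$ with the doubling weight $w(x)=(\beta/2)|x|^{\beta-1}$. This weight has a single zero or singularity, at $x=0$, which is exactly the setting of \citet[Theorem~9.1]{Kopotun}. That theorem bounds a weighted modulus of smoothness, or a weighted $K$-functional, of $f$ in $L_1(w)$ at scale $t\asymp 1/n$ by averages of the best weighted approximation errors $E_k(f)_{w,1}$ for $k\lesssim n$. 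Apply it to $f=\sgn$. The weighted first-order modulus of $\sgn$ at step $h$ is at least of order $\int_{|x|\le h}w\asymp h^\beta$, because shifting by $h$ flips the sign on an interval of length comparable to $h$ next to $0$. Kopotun's modulus uses an $x$-dependent step $h\sqrt{1-x^2}+h^2$, which near $0$ is just $\asymp h$, so this lower bound carries over with $h\asymp 1/n$.

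The inverse inequality has the form $\omega(f,1/n)_{w,1}\lesssim n^{-r}\sum_{k\le n}k^{r-1}E_k(f)_{w,1}$ with $r>\beta$. Using the upper bound $E_k\lesssim k^{-\beta}$ for all $k$ would only be circular, so the bound must instead be applied along a dyadic range. For $k\le n/M$, bound $E_k\le C_\beta k^{-\beta}$; the resulting sum contributes at most $C M^{\beta-r}n^{-\beta}$. For $n/M<k\le n$, monotonicity gives $E_k\le E_{n/M}$, which I would assume is at most $\eta (n/M)^{-\beta}$. Choosing $M$ large and then $\eta$ small, the right side becomes smaller than the lower bound $c\,n^{-\beta}$ on the modulus, which is a contradiction. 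This yields $E_{n}(\beta)\ge c_\beta n^{-\beta}$ at least along $n\mapsto n/M$, and the statement then extends to all $n$ by monotonicity of $E_n$ in $n$ and comparability of $n$ and $n/M$.

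The hardest step is matching Kopotun's exact hypotheses and his modulus: checking that $|x|^{\beta-1}$ is an admissible doubling weight for all $\beta>0$, including the singular case $\beta<1$, and confirming that his weighted modulus is indeed bounded below by $\mu_\beta([-h,h])$ for $\sgn$. If his modulus is defined via a $K$-functional instead, I would bound that $K$-functional from below directly. Any $g$ with $\|w\,\varphi^r g^{(r)}\|$ controlled must differ from $\sgn$ in $L_1(\mu_\beta)$ by at least $\gtrsim h^\beta$ near $0$, since $g$ cannot jump by $2$ over an interval of length $h$ without paying a derivative cost of order $h^{-r}$ there.
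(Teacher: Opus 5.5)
Your skeleton is the paper's. The upper bound comes from \cref{lem:jackson,lem:integrated-approx} via $\mu_\beta([-t,t])=t^\beta$. The lower bound comes from \citet[Theorem~9.1]{Kopotun} with order $r>\beta$, followed by splitting $\sum_k k^{r-1}E_k$: small $k$ use the already-proved $E_k\lesssim k^{-\beta}$ and gain a factor $M^{\beta-r}$, and large $k$ use monotonicity. Your contradiction form (apply at scale $1/n$ and assume $E_{n/M}\le\eta(n/M)^{-\beta}$) is the paper's direct argument (apply at $m=Mn$ and absorb the $M^{-r}$ term) reparametrized. That part is fine.

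The gap is the lower bound on Kopotun's modulus of $\sgn$, the step you flag as the crux, and your justification for it fails.

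\begin{itemize}
\item Kopotun's complete modulus contains no differences straddling the designated point $0$. Its main-part term uses only difference stencils that avoid an $A\rho(h,0)$-neighbourhood of $0$. Since $\sgn$ is constant on each remaining component, that term vanishes identically for every order.
\item Your heuristic ``shifting by $h$ flips the sign near $0$'' therefore yields nothing. A first-order modulus is also the wrong object, since your splitting needs order $r>\beta$.
\item All that survives is the local best-approximation term at $0$, namely $\inf_{P\in\Pi_{r-1}}\int_{-b_t}^{b_t}|\sgn(x)-P(x)|\,|x|^{\beta-1}\,\dd x$ with $b_t=2A(t+t^2)\asymp t$. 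The endpoint local terms vanish because $\sgn$ is constant there.
\end{itemize}

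The missing step is to evaluate this term by scaling:
\begin{itemize}
\item Taking $P=0$ bounds it by $O(t^\beta)$.
\item For the reverse bound, restrict to $[-2At,2At]$ and substitute $x=tu$. This factors out exactly $t^\beta$ and leaves $\inf_{Q\in\Pi_{r-1}}\int_{-2A}^{2A}|\sgn(u)-Q(u)|\,|u|^{\beta-1}\,\dd u$.
\item That constant is strictly positive: $\Pi_{r-1}$ is finite-dimensional, hence closed in weighted $L_1$, and $\sgn$ is not a.e.\ equal to a polynomial.
\end{itemize}
This gives the modulus $\asymp t^\beta$ for small $t$ and every fixed order $r$, after which your splitting argument goes through. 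Your $K$-functional fallback is then unnecessary, and as sketched it is not a proof.
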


\begin{proof}
The upper bound follows from \cref{lem:jackson,lem:integrated-approx}, because $\mu_\beta([-t,t])=t^\beta$.

For the lower bound, we use only the inverse theorem of \citet[Theorem~9.1]{Kopotun}.  The power weight
$w_\beta(x):=|x|^{\beta-1}$
is the generalized-Jacobi weight with one designated point $z_1=0$ and exponent $\beta-1>-1$, and belongs to the class treated there.  Fix an integer $r>\beta$, which will be the order of the modulus, and a sufficiently large constant $A$.  Specializing Kopotun's theorem to $p=1$, and absorbing the normalization of $\mu_\beta$ into the constant, gives
\begin{equation}
\label{eq:kopotun-inverse}
\omega_\varphi^r(f,A,m^{-1})_{1,w_\beta}
\le
C m^{-r}\sum_{k=1}^m k^{r-1}E_k(f)_{1,w_\beta},
\end{equation}
Here $\norm{g}_{1,w_\beta}:=\int_{-1}^1|g(x)|w_\beta(x)\,\dd x$, $E_k(f)_{1,w_\beta}:=\inf_{P\in\Pi_k}\norm{f-P}_{1,w_\beta}$, and $\omega_\varphi^r(f,A,m^{-1})_{1,w_\beta}$ is the complete weighted modulus of smoothness defined in \citet[Section~3]{Kopotun}.  The normalization of $\mu_\beta$ gives $E_k(\sgn)_{1,w_\beta}=(2/\beta)E_k(\beta)$.  Kopotun uses $\Pi_k$ for polynomials of degree less than $k$, whereas our $\Pi_k$ contains polynomials of degree at most $k$; this one-degree shift only affects the constants.

For $f=\sgn$, the definition of the complete modulus simplifies substantially.  Its main-part finite-difference term is evaluated away from a neighborhood of zero and vanishes because $\sgn$ is constant on each remaining component.  Only the local best-approximation term remains:
\[
\omega_\varphi^r(\sgn,A,t)_{1,w_\beta}
=
\inf_{P\in\Pi_{r-1}}
\int_{-b_t}^{b_t}|\sgn(x)-P(x)|\,|x|^{\beta-1}\,\dd x,
\qquad
b_t:=2A\rho(t,0)=2A(t+t^2),
\]
for all sufficiently small $t$. The zero polynomial gives an upper bound of order $t^\beta$.  For the reverse bound, the interval $[-b_t,b_t]$ contains $[-2At,2At]$, and the substitution $x=tu$ gives
\[
\inf_{P\in\Pi_{r-1}}
\int_{-2At}^{2At}|\sgn(x)-P(x)|\,|x|^{\beta-1}\,\dd x
=
t^\beta
\inf_{Q\in\Pi_{r-1}}
\int_{-2A}^{2A}|\sgn(u)-Q(u)|\,|u|^{\beta-1}\,\dd u.
\]
The last infimum is strictly positive: $\Pi_{r-1}$ is a finite-dimensional closed subspace of the weighted $L_1$ space, whereas $\sgn$ is not equal almost everywhere to a polynomial.  We have therefore shown that
\begin{equation}
\label{eq:sign-complete-modulus}
\omega_\varphi^r(\sgn,A,t)_{1,w_\beta}
\asymp
t^\beta,
\qquad 0<t\le t_\beta.
\end{equation}
Together, \eqref{eq:kopotun-inverse} and \eqref{eq:sign-complete-modulus} give a lower bound of order $m^{-\beta}$ on a weighted average of $E_1(\beta),\ldots,E_m(\beta)$ for all sufficiently large $m$.  It remains to extract a pointwise lower bound for $E_n(\beta)$.

Write $E_k:=E_k(\beta)$.  By the upper bound already established, $E_k\lesssim k^{-\beta}$.  Apply \eqref{eq:kopotun-inverse} with $m=Mn$, where $M\ge2$ is a fixed integer to be chosen.  By \eqref{eq:sign-complete-modulus},
\[
c(Mn)^{-\beta}
\le
C(Mn)^{-r}\sum_{k=1}^{Mn}k^{r-1}E_k.
\]
Split the sum at $n$.  Using $E_k\lesssim k^{-\beta}$ on the first part and monotonicity $E_k\le E_n$ for $k\ge n$ on the second gives
\[
\sum_{k=1}^{n}k^{r-1}E_k
\lesssim n^{r-\beta},
\qquad
\sum_{k=n+1}^{Mn}k^{r-1}E_k
\lesssim (Mn)^rE_n.
\]
Consequently,
\[
cM^{-\beta}n^{-\beta}
\le
C_3M^{-r}n^{-\beta}+C_4E_n.
\]
Since $r>\beta$, choose the fixed $M$ large enough that $C_3M^{-r}\le(c/2)M^{-\beta}$.  The first term on the right can then be absorbed into the left,
giving
$E_n\gtrsim n^{-\beta}$.  Decreasing the implicit constant handles the finitely many small values of $n$ not covered by \eqref{eq:sign-complete-modulus}.
\end{proof}

\subsection{Duality and moment-matched alternatives}

\begin{lemma}[$L_1$ duality produces a moment-canceling family]
\label{lem:duality-moment-pair}
For every integer $m\ge1$, there is an odd function $h_m\in L_\infty(\mu_\beta)$ satisfying
\begin{equation}
\label{eq:hm-properties}
\norm{h_m}_\infty\le1,
\qquad
\int P(z)h_m(z)\,\dd\mu_\beta(z)=0
\quad\text{for every }P\in\Pi_m,
\end{equation}
and
\begin{equation}
\label{eq:hm-target}
\int\sgn(z)h_m(z)\,\dd\mu_\beta(z)
=E_m(\beta).
\end{equation}
Define the raw alternatives $G_{m,+},G_{m,-}$ by
\begin{equation}
\label{eq:raw-dual-pair}
\dd G_{m,+}:=(1+h_m)\dd\mu_\beta,
\qquad
\dd G_{m,-}:=(1-h_m)\dd\mu_\beta.
\end{equation}
They are probability measures on $[-1,1]$ such that:
\begin{enumerate}[label=(\alph*)]
\item their moments agree through degree $m$;
\item $G_{m,\pm}([-t,t])\le2t^\beta$ for every $0<t\le1$;
\item $G_{m,-}=G_{m,+}^\dagger$ and
$
\theta(G_{m,+})
=-\theta(G_{m,-})
=E_m(\beta).
$
\end{enumerate}
Moreover, for every $L\ge1$, let
\begin{equation}
\label{eq:adjusted-dual-pair}
\rho_L:=\min\{1,L/2\},
\qquad
H_0:=\frac12(\delta_{-1}+\delta_{+1}),
\qquad
H_{m,\pm}:=\rho_LG_{m,\pm}+(1-\rho_L)H_0.
\end{equation}
Then $H_{m,\pm}\in\calG_{\beta,L}$, their moments agree through degree $m$, $H_{m,-}=H_{m,+}^\dagger$, and
\begin{equation}
\label{eq:adjusted-dual-target}
\theta(H_{m,+})
=-\theta(H_{m,-})
=\rho_LE_m(\beta).
\end{equation}
\end{lemma}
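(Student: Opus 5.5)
The dual function $h_m$ comes from $L_1$--$L_\infty$ duality for best approximation from the finite-dimensional subspace $\Pi_m\subset L_1(\mu_\beta)$. The classical duality theorem (Hahn--Banach, or its finite-dimensional form) states that
\[
E_m(\beta)=\operatorname{dist}_{L_1(\mu_\beta)}(\sgn,\Pi_m)=\max\Bigl\{\int\sgn\,h\,\dd\mu_\beta:\ \norm{h}_{L_\infty(\mu_\beta)}\le1,\ h\perp\Pi_m\Bigr\},
\]
and that the maximum is attained. I would quote this, or prove it quickly: the functional $\sgn+\Pi_m\mapsto E_m$ on the span of $\sgn$ and $\Pi_m$, modulo $\Pi_m$, extends with norm one, and $L_1(\mu_\beta)^*=L_\infty(\mu_\beta)$ because $\mu_\beta$ is finite. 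Since $\sgn\notin\Pi_m$ almost everywhere, we have $E_m>0$.

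To make $h_m$ odd, I symmetrize. If $h$ is any maximizer, set $\tilde h(z)=(h(z)-h(-z))/2$. The measure $\mu_\beta$ is even, so $P\mapsto P(-\cdot)$ preserves $\Pi_m$ and $\tilde h$ still annihilates $\Pi_m$. Since $\sgn$ is odd, $\int\sgn\,\tilde h\,\dd\mu_\beta=\int\sgn\,h\,\dd\mu_\beta=E_m$. Also $\norm{\tilde h}_\infty\le1$. This gives \eqref{eq:hm-properties} and \eqref{eq:hm-target}.

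Next I verify the properties of the raw pair.

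\textbf{Probability measures.} Because $|h_m|\le1$, the densities $1\pm h_m$ are nonnegative. Their total mass is $1\pm\int h_m\,\dd\mu_\beta=1$, since the constant $1$ lies in $\Pi_m$ and is annihilated.

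\textbf{(a)} For every $P\in\Pi_m$, $\int P\,\dd G_{m,+}-\int P\,\dd G_{m,-}=2\int Ph_m\,\dd\mu_\beta=0$.

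\textbf{(b)} $G_{m,\pm}([-t,t])\le2\mu_\beta([-t,t])=2t^\beta$.

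\textbf{(c)} Oddness of $h_m$ and evenness of $\mu_\beta$ give $G_{m,+}^\dagger$ density $(1+h_m(-z))=(1-h_m(z))$ with respect to $\mu_\beta$, so $G_{m,+}^\dagger=G_{m,-}$. Moreover $\theta(G_{m,\pm})=\int\sgn\,\dd\mu_\beta\pm\int\sgn\,h_m\,\dd\mu_\beta=0\pm E_m$, because $\int\sgn\,\dd\mu_\beta=0$ by symmetry.

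For the adjusted pair, $H_{m,\pm}$ is a convex combination of probability measures. The component $H_0$ is reflection invariant, has equal moments for both signs (it is common to both), and has $\theta(H_0)=0$. Hence $H_{m,-}=H_{m,+}^\dagger$, moments agree through degree $m$, and $\theta(H_{m,\pm})=\pm\rho_LE_m$ by linearity. For the margin condition: $H_0$ puts no mass in $[-t,t]$ for $t<1$, so $H_{m,\pm}([-t,t])\le2\rho_Lt^\beta\le Lt^\beta$ since $\rho_L\le L/2$. At $t=1$ the bound is $1\le L$.

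The only nonroutine step is the existence of an attained $L_\infty$ maximizer in the duality. I would handle it by the Hahn--Banach extension argument above, which gives attainment directly, rather than via a weak-$*$ compactness argument.
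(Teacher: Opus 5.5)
Your proposal is correct and follows the paper's proof essentially step for step. It uses the same $L_1$--$L_\infty$ duality, the same odd symmetrization $\tilde h(z)=(h(z)-h(-z))/2$, and the same direct checks of positivity, total mass, moment matching, the margin bound, reflection, and the targets for both the raw and adjusted pairs. Your decision to get attainment from the Hahn--Banach extension rather than from weak-$^*$ compactness is exactly the argument the paper gives in \cref{rem:l1-duality}, so there is no substantive difference.
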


\begin{proof}
The polynomial-approximation error admits a dual certificate $h_m$ that annihilates every polynomial in $\Pi_m$ while being maximally correlated with $\sgn$.  Perturbing the reference measure in the directions $\pm h_m$ will then produce moment-matched distributions with oppositely separated targets.

Let $f:=\sgn$ and use the dual pairing
\[
\langle g,h\rangle
:=
\int g(x)h(x)\,\dd\mu_\beta(x),
\qquad
g\in L_1(\mu_\beta),\quad h\in L_\infty(\mu_\beta).
\]
Since $E_m(\beta)$ is the $L_1$ distance from $f$ to the subspace $\Pi_m$, it has the following dual representation; see \cref{rem:l1-duality}.
\begin{equation}
\label{eq:l1-duality}
E_m(\beta)
=
\max_{\substack{h\in L_\infty(\mu_\beta),\ \norm h_\infty\le1\\
\langle P,h\rangle=0\ \text{for every }P\in\Pi_m}}
\langle f,h\rangle.
\end{equation}
The maximum is attained by weak-$^*$ compactness.  We may choose a maximizer that is odd.  Indeed, if $h$ is feasible, then
\[
h_{\mathrm{odd}}(x):=\frac{h(x)-h(-x)}2
\]
is feasible, has $L_\infty$ norm at most one, and has the same objective value because $\mu_\beta$ is symmetric, $\Pi_m$ is reflection invariant, and $\sgn$ is odd.

Let $h_m$ be such an odd maximizer and define $G_{m,\pm}$ by \eqref{eq:raw-dual-pair}.  These measures are nonnegative because $\norm{h_m}_\infty\le1$.  Orthogonality to the constant polynomial makes their total masses one; orthogonality to $x^j$, $j=0,\dots,m$, makes their moments equal.  Moreover,
\[
G_{m,\pm}([-t,t])
\le
2\mu_\beta([-t,t])
=
2t^\beta.
\]
Oddness gives $G_{m,-}=G_{m,+}^\dagger$.  Finally, symmetry of $\mu_\beta$ gives $\int\sgn\,\dd\mu_\beta=0$, so \eqref{eq:l1-duality} gives
\[
\theta(G_{m,+})
=
\int\sgn(x)h_m(x)\,\dd\mu_\beta(x)
=
E_m(\beta),
\]
and reflection gives the target of $G_{m,-}$.

It remains to verify the claims for the adjusted pair.  For every $0<t<1$, the outer atoms do not contribute and
\[
H_{m,\pm}([-t,t])
\le
2\rho_Lt^\beta
\le
Lt^\beta;
\]
at $t=1$, the required inequality is $1\le L$.  Thus $H_{m,\pm}\in\calG_{\beta,L}$.  Adding the same symmetric measure to the two raw alternatives preserves their moment matching, and symmetry of $H_0$ together with reflection of the raw pair gives $H_{m,-}=H_{m,+}^\dagger$.  Finally, $\theta(H_0)=0$, so
\[
\theta(H_{m,+})
=-\theta(H_{m,-})
=\rho_LE_m(\beta).
\]
\end{proof}

\begin{remark}[Duality interpretation]
\label{rem:l1-duality}
Identity~\eqref{eq:l1-duality} is the infinite-dimensional analogue of linear-programming duality.  Indeed, put $E:=\inf_{P\in\Pi_m}\norm{f-P}_{L_1(\mu_\beta)}$.  For every feasible $h$ and every $P\in\Pi_m$,
\[
\langle f,h\rangle
=
\langle f-P,h\rangle
\le
\norm{f-P}_{L_1(\mu_\beta)},
\]
so the dual value is at most $E$.  Hahn--Banach supplies a norm-one functional on $L_1(\mu_\beta)$ that annihilates $\Pi_m$ and takes the value $E$ at $f$.  Identifying $L_1(\mu_\beta)^*$ with $L_\infty(\mu_\beta)$ represents this functional by a feasible $h$ attaining equality, and hence establishes strong duality.
\end{remark}

\begin{proof}[Proof of the lower bound in \cref{thm:identification-floor}]
Let $H_{n,+},H_{n,-}$ be the adjusted degree-$n$ alternatives supplied by \cref{lem:duality-moment-pair}.  Their targets are opposite with
\[
\theta(H_{n,+})
=
-\theta(H_{n,-})
=
\rho_LE_n(\beta)
\ge
c_{\beta,L}n^{-\beta}.
\]
By \cref{lem:moment-equivalence}, $K_nH_{n,+}=K_nH_{n,-}$, so this pair proves the lower bound on $\omega_{n,\beta,L}(0)$.
\end{proof}

\begin{remark}[Direct minimax consequence]
The same pair also gives a direct proof of~\eqref{eq:N-only-lower} by a two-point argument.  Let
\[
\Pp_\pm
:=
(K_nH_{n,\pm})^{\otimes Q}
\]
denote the laws of the full $Q$-question experiment.  Moment equivalence gives $\Pp_+=\Pp_-$.  Hence, for every decision rule $\widehat\sigma$,
\begin{align*}
\sup_{G\in\calG_{\beta,L}}
\E_G\mathcal L_G(\widehat\sigma)
&\ge
\frac{\rho_LE_n(\beta)}2
\left[
\Pp_+(\widehat\sigma=-1)
+
\Pp_-(\widehat\sigma=+1)
\right]\\
&=
\frac{\rho_LE_n(\beta)}2.
\end{align*}
The equality uses $\Pp_+=\Pp_-$ and $\widehat\sigma\in\{-1,+1\}$.  The prefactor $\rho_LE_n(\beta)$ is the cost of a wrong decision under either alternative, since $|\theta(H_{n,\pm})|=\rho_LE_n(\beta)$.  Taking the infimum over $\widehat\sigma$ gives $\calR^{\mathrm{dec}}_{Q,n}(\beta,L)\gtrsim_{\beta,L}n^{-\beta}$, independently of $Q$.
\end{remark}

\section{Coefficient bounds and the Durrmeyer calculation}
\label{app:coefficients}

\subsection{Proof of the explicit exponential coefficient bound}

\begin{proof}[Proof of \cref{lem:B0}]
Expand $P(z)=c_0+\sum_{j=1}^dc_jT_j(z)$ in Chebyshev polynomials.  The cosine coefficient formula and $\norm P_\infty\le1$ give $|c_0|\le1$ and $|c_j|\le2$ for $j\ge1$.  Let $L_j$ be the sum of the absolute values of the monomial coefficients of $T_j$.  The recurrence $T_{j+1}(z)=2zT_j(z)-T_{j-1}(z)$ gives $L_{j+1}\le2L_j+L_{j-1}$; since $L_0=L_1=1$, it follows that $L_j\le C(1+\sqrt2)^j$.  After substituting $z=2p-1$, each monomial $z^\ell$ has absolute coefficient sum $3^\ell$, so the corresponding sum for $T_j(2p-1)$ is at most $C\{3(1+\sqrt2)\}^j$.

Write $P(2p-1)=\sum_{j=0}^d r_jp^j$, so the $r_j$ are its coefficients in the ordinary power basis.  The preceding bounds and a geometric sum give $\sum_{j=0}^d|r_j|\le C_0B_0^d$, with $B_0=7.5>3(1+\sqrt2)$.

For any $n\ge d$, the identity $p^j=\sum_{k=j}^n\frac{\binom kj}{\binom nj}b_{k,n}(p)$ gives the degree-$n$ Bernstein coefficients directly:
\[
a_k(P)=\sum_{j=0}^{\min(k,d)}r_j\frac{\binom kj}{\binom nj},
\qquad 0\le k\le n.
\]
Every ratio in this sum lies in $[0,1]$, so $|a_k(P)|\le\sum_{j=0}^d|r_j|\le C_0B_0^d$.
\end{proof}

\subsection{Proofs for the Durrmeyer spectrum and coefficient bound}
\label{app:durrmeyer}

\begin{proof}[Proof of \cref{prop:durrmeyer}]
The operator $D_n=T_nT_n^*$ is self-adjoint and positive.  It also preserves each polynomial space $\Pi_j$.  Indeed, beta integration gives
\[
(T_n^*u^j)(k)
=
\frac{(n+1)!}{(n+j+1)!}(k+1)(k+2)\cdots(k+j),
\]
which is a degree-$j$ polynomial in $k$.  Averaging this polynomial under $k\sim\Bin(n,p)$ shows that $D_nu^j\in\Pi_j$ and that its leading coefficient is
\[
\lambda_{j,n}
=
\frac{n!(n+1)!}{(n-j)!(n+j+1)!}.
\]
Here we use the fact that the expectation of a monic degree-$j$ polynomial in a $\Bin(n,p)$ variable has leading coefficient $n!/(n-j)!$ as a polynomial in $p$.

For $j=0$, the claim follows directly from $D_n1=1$.  For $j\ge1$, because $L_0,\dots,L_j$ are obtained by orthogonalizing $1,u,\dots,u^j$,
\[
\Pi_j
=
\Pi_{j-1}\oplus\operatorname{span}\{L_j\}.
\]
Now $D_nL_j\in\Pi_j$ by degree preservation.  For every $q\in\Pi_{j-1}$, self-adjointness and degree preservation give
\[
\inner{D_nL_j}{q}_{L_2}
=
\inner{L_j}{D_nq}_{L_2}
=0,
\]
because $D_nq\in\Pi_{j-1}$ and $L_j\perp\Pi_{j-1}$.  Hence $D_nL_j$ lies in the one-dimensional space $\Pi_j\cap\Pi_{j-1}^\perp=\operatorname{span}\{L_j\}$.  Comparing leading coefficients proves \eqref{eq:durrmeyer-eigen}.

Define
$
v_j=\lambda_{j,n}^{-1/2}T_n^*L_j.
$
Then
\[
\inner{v_i}{v_j}_{\nu_n}
=
\frac{1}{\sqrt{\lambda_{i,n}\lambda_{j,n}}}
\inner{L_i}{D_nL_j}_{L_2}
=\one\{i=j\},
\]
and
$
T_nv_j=\sqrt{\lambda_{j,n}}L_j.
$
There are $n+1$ orthonormal vectors $v_j$ in the $(n+1)$-dimensional space $\mathsf H_n$, so they form a basis.  Thus $(v_j,L_j,\sqrt{\lambda_{j,n}})$ is a singular system and \eqref{eq:durrmeyer-svd} follows.
\end{proof}

\begin{proof}[Proof of \cref{prop:coefficient-spectral}]
If $P\in\Pi_d$, write $\widetilde P=\sum_{j=0}^dc_jL_j$.  Since $T_na(P)=\widetilde P$, comparison of the Legendre coefficients in \eqref{eq:durrmeyer-svd} gives
\[
\inner{a(P)}{v_j}_{\nu_n}
=
\begin{cases}
c_j/\sqrt{\lambda_{j,n}},&0\le j\le d,\\
0,&d<j\le n.
\end{cases}
\]
Consequently,
\[
a(P)=\sum_{j=0}^d\frac{c_j}{\sqrt{\lambda_{j,n}}}v_j.
\]
Parseval's identity in $\mathsf H_n$ now gives
\[
\frac1{n+1}\sum_{k=0}^na_k(P)^2
=
\norm{a(P)}_{\nu_n}^2
=
\sum_{j=0}^d\frac{c_j^2}{\lambda_{j,n}},
\]
which proves \eqref{eq:coefficient-spectral}.  Since $j\mapsto\lambda_{j,n}$ is decreasing, Parseval in $L_2[0,1]$ also gives
\[
\norm{a(P)}_{\nu_n}
\le
\kappa_{d,n}
\left(\sum_{j=0}^dc_j^2\right)^{1/2}
=
\kappa_{d,n}\norm{\widetilde P}_{L_2[0,1]}.
\]
Finally,
\[
\max_k|a_k(P)|
\le
\sqrt{\sum_{k=0}^na_k(P)^2}
=
\sqrt{n+1}\norm{a(P)}_{\nu_n}.
\]
Applying the same bound to the maximum and minimum proves \eqref{eq:range-spectral}.
\end{proof}

\paragraph{Asymptotics of the inverse singular values.}
For $d\le n/2$,
\[
\log\kappa_{d,n}
=\frac12\sum_{r=0}^{d-1}
\log\left(1+\frac{2r+2}{n-r}\right).
\]
Using $x/(1+x)\le\log(1+x)\le x$ and $n-r\asymp n$ gives \eqref{eq:kappa-asymp}.  At $d=n$,
\[
\kappa_{n,n}^2
=\frac{(2n+1)!}{n!(n+1)!}
=\binom{2n+1}{n},
\]
and Stirling's formula gives the endpoint asymptotic.

\section{Proofs for the spectral lower bound and rate extraction}
\label{app:spectral-lower}

We prove \cref{thm:spectral-modulus-lower,prop:explicit-spectral-rates}.  The lower-bound argument combines the degree-$d$ dual alternatives from \cref{app:identification} with the Durrmeyer singular system from \cref{app:durrmeyer}.

\begin{lemma}[Adjoint tail identity for integrable perturbations]
\label{lem:adjoint-tail}
Let $g\in L_1[0,1]$ and define
\[
u_k:=\int_0^1b_{k,n}(p)g(p)\,\dd p,
\qquad k=0,\dots,n.
\]
Then
\begin{equation}
\label{eq:adjoint-tail-identity}
(n+1)\sum_{k=0}^nu_k^2
=
\sum_{j=0}^n
\lambda_{j,n}
\left(\int_0^1g(p)L_j(p)\,\dd p\right)^2.
\end{equation}
\end{lemma}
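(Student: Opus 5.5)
The identity is Parseval's identity for $T_n^*$ written in the singular system of \cref{prop:durrmeyer}. The plan is to recognize $u$ as $T_n^*g$ up to normalization and then read off its $\mathsf H_n$-norm from the singular value decomposition. By \eqref{eq:Tn-adjoint-conditional}, $(T_n^*g)(k)=(n+1)u_k$. Hence
\[
\norm{T_n^*g}_{\nu_n}^2=\frac1{n+1}\sum_{k=0}^n(n+1)^2u_k^2=(n+1)\sum_{k=0}^nu_k^2,
\]
which is the left side of \eqref{eq:adjoint-tail-identity}. There is one technical caveat: $g$ is only integrable. However, each $b_{k,n}$ is bounded on $[0,1]$, so $u_k$ is well defined, and I would simply \emph{define} $T_n^*g$ by the integral formula for $g\in L_1$. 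Note that $T_n^*$ is also given by this formula on $L_2$.

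Next, expand $T_n^*g$ in the orthonormal basis $v_0,\dots,v_n$ of $\mathsf H_n$. Parseval in $\mathsf H_n$ gives $\norm{T_n^*g}_{\nu_n}^2=\sum_j\inner{T_n^*g}{v_j}_{\nu_n}^2$. The coefficients are computed by the adjoint relation, which holds for $g\in L_1$ because it is just Fubini on the finite sum:
\[
\inner{T_n^*g}{a}_{\nu_n}=\frac1{n+1}\sum_k(n+1)a_k\int_0^1b_{k,n}g=\int_0^1 g\,(T_na)\,\dd p
\]
for every $a\in\mathsf H_n$. Applying this with $a=v_j$ and using $T_nv_j=\sqrt{\lambda_{j,n}}L_j$ from the proof of \cref{prop:durrmeyer}, we get $\inner{T_n^*g}{v_j}_{\nu_n}=\sqrt{\lambda_{j,n}}\int_0^1 gL_j$. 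Squaring and summing over $j=0,\dots,n$ gives exactly the right side of \eqref{eq:adjoint-tail-identity}.

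The only step needing care is the passage from $L_2$ to $L_1$. I handle it by never invoking the Hilbert adjoint on $L_1$, and instead verifying the pairing identity directly: all functions involved ($b_{k,n}$, $L_j$, $T_na$) are polynomials, hence bounded. So no real obstacle is expected; the rest is bookkeeping of the $(n+1)$ normalization of $\nu_n$.
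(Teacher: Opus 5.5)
Your proposal is correct and follows essentially the same route as the paper's proof. Both define $T_n^*g$ algebraically by $(T_n^*g)(k)=(n+1)u_k$ and obtain $\inner{T_n^*g}{v_j}_{\nu_n}=\sqrt{\lambda_{j,n}}\int_0^1 gL_j$ from the pairing identity with $T_nv_j=\sqrt{\lambda_{j,n}}L_j$; you additionally spell out the Fubini verification of that pairing on $L_1$, which the paper uses implicitly, and then apply Parseval in $\mathsf H_n$.
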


\begin{proof}
Define the coefficient vector $T_n^*g$ algebraically by
\[
(T_n^*g)(k)=(n+1)u_k.
\]
The vectors $v_0,\dots,v_n$ in \cref{app:durrmeyer} form an orthonormal basis of $(\R^{n+1},\nu_n)$ and satisfy $T_nv_j=\sqrt{\lambda_{j,n}}L_j$.  Although $g$ need not belong to $L_2$, every integral below is finite because $g\in L_1$ and $L_j$ is bounded.  Hence
\[
\langle T_n^*g,v_j\rangle_{\nu_n}
=
\int_0^1g(p)(T_nv_j)(p)\,\dd p
=
\sqrt{\lambda_{j,n}}
\int_0^1g(p)L_j(p)\,\dd p.
\]
Parseval in the finite-dimensional coefficient space gives \eqref{eq:adjoint-tail-identity}, because
\[
\|T_n^*g\|_{\nu_n}^2
=
\frac1{n+1}\sum_{k=0}^n(n+1)^2u_k^2
=(n+1)\sum_{k=0}^nu_k^2.
\]
\end{proof}

\begin{proof}[Proof of \cref{thm:spectral-modulus-lower}]
Let $h_d$, $G_{d,\pm}$, and $H_{d,\pm}$ be, respectively, the degree-$d$ dual certificate, raw alternatives, and margin-adjusted alternatives supplied by \cref{lem:duality-moment-pair}.  By \cref{eq:adjusted-dual-target,lem:weighted-sign},
\begin{equation}
\label{eq:spectral-pair-target}
|\theta(H_{d,+})-\theta(H_{d,-})|
=2\rho_LE_d(\beta)
\ge c_{\beta,L}d^{-\beta}.
\end{equation}

We now bound the Hellinger distance between their count laws.  Write $p=(1+z)/2$.  Under this transformation, $\mu_\beta$ has density
\[
w_\beta(p)=\beta|2p-1|^{\beta-1},
\qquad 0<p<1,
\]
with respect to Lebesgue measure.  Put
\[
g_d(p):=h_d(2p-1)w_\beta(p),
\]
and define
\[
q_0(k):=\int_0^1b_{k,n}(p)w_\beta(p)\,\dd p,
\qquad
u_d(k):=\int_0^1b_{k,n}(p)g_d(p)\,\dd p.
\]
Then
\[
K_nG_{d,+}=q_0+u_d,
\qquad
K_nG_{d,-}=q_0-u_d.
\]
If $r_0:=K_nH_0$ and $m_0:=\rho_Lq_0+(1-\rho_L)r_0$, the final pair has probability mass functions
\[
P_{d,+}:=K_nH_{d,+}=m_0+\rho_Lu_d,
\qquad
P_{d,-}:=K_nH_{d,-}=m_0-\rho_Lu_d.
\]
Because $m_0\ge\rho_Lq_0$ coordinatewise,
\begin{align}
 h^2(P_{d,+},P_{d,-})
&=
\sum_{k=0}^n
\left(\sqrt{m_0(k)+\rho_Lu_d(k)}-
\sqrt{m_0(k)-\rho_Lu_d(k)}\right)^2\notag\\
&\le
2\rho_L^2\sum_{k=0}^n\frac{u_d(k)^2}{m_0(k)}
\le
2\rho_L\sum_{k=0}^n\frac{u_d(k)^2}{q_0(k)}.
\label{eq:hellinger-baseline}
\end{align}
For $0<\beta\le1$, $w_\beta(p)\ge\beta$ for almost every $p\in(0,1)$, and
$
\int_0^1b_{k,n}(p)\,\dd p=\frac1{n+1}.
$
Thus
\begin{equation}
\label{eq:q0-lower}
q_0(k)\ge\frac\beta{n+1}.
\end{equation}
Combining \eqref{eq:hellinger-baseline} and \eqref{eq:q0-lower} gives
\begin{equation}
\label{eq:hellinger-adjoint}
h^2(P_{d,+},P_{d,-})
\le
\frac{2\rho_L(n+1)}\beta
\sum_{k=0}^nu_d(k)^2
=
\frac{2\rho_L}\beta
\|T_n^*g_d\|_{\nu_n}^2.
\end{equation}

The cancellation in \eqref{eq:hm-properties} removes the first $d$ singular modes.  Indeed, $L_j((1+z)/2)$ is a polynomial of degree $j$ in $z$, so
\[
\int_0^1g_d(p)L_j(p)\,\dd p=0,
\qquad j=0,\dots,d.
\]
By \cref{lem:adjoint-tail} and the monotonicity of $j\mapsto\lambda_{j,n}$,
\begin{align*}
\|T_n^*g_d\|_{\nu_n}^2
&=
\sum_{j=d+1}^n
\lambda_{j,n}
\left(\int_0^1g_d(p)L_j(p)\,\dd p\right)^2\\
&\le
\lambda_{d+1,n}
\sum_{j=d+1}^n
\left(\int_0^1g_d(p)L_j(p)\,\dd p\right)^2.
\end{align*}
Since $\|g_d\|_1\le1$ and the orthonormal shifted Legendre polynomial satisfies $\|L_j\|_\infty=\sqrt{2j+1}$,
\[
\left|\int_0^1g_d(p)L_j(p)\,\dd p\right|
\le\sqrt{2j+1}.
\]
Therefore
\begin{equation}
\label{eq:L1-tail}
\|T_n^*g_d\|_{\nu_n}^2
\le
\lambda_{d+1,n}
\sum_{j=0}^n(2j+1)
=
\lambda_{d+1,n}(n+1)^2.
\end{equation}
Equations \eqref{eq:hellinger-adjoint} and \eqref{eq:L1-tail} imply
\[
h(P_{d,+},P_{d,-})
\le
C_{\beta,L}
\frac{n+1}{\kappa_{d+1,n}}.
\]
Together with \eqref{eq:spectral-pair-target}, this proves \eqref{eq:spectral-modulus-lower}.

If $\beta>1/2$, then $g_d\in L_2[0,1]$ and
$
\|g_d\|_2^2
\le
\int_0^1w_\beta(p)^2\,\dd p
=
\frac{\beta^2}{2\beta-1}.
$
Bessel's inequality replaces \eqref{eq:L1-tail} by
\[
\|T_n^*g_d\|_{\nu_n}^2
\le
\lambda_{d+1,n}\|g_d\|_2^2
\le
C_\beta\lambda_{d+1,n}.
\]
Substitution into \eqref{eq:hellinger-adjoint} proves \eqref{eq:spectral-modulus-lower-L2}.
\end{proof}

\begin{proof}[Proof of the lower assertion in \cref{prop:explicit-spectral-rates}]
For $j\le n/2$, the lower half of \eqref{eq:kappa-asymp} gives
\[
\log\kappa_{j,n}\ge c_\kappa j(j+1)/n.
\]
Let
\[
\Lambda_{Q,n}:=\log\!\bigl(A_{\beta,L}(n+1)\sqrt Q\bigr),
\]
where $A_{\beta,L}$ is large enough to absorb the constant in \eqref{eq:spectral-modulus-lower}, and choose
\[
d+1
=
\left\lceil
\sqrt{\frac{2n\Lambda_{Q,n}}{c_\kappa}}
\right\rceil.
\]
Because $y_{Q,n}\ge1$ and $A_{\beta,L}$ is fixed, $\Lambda_{Q,n}\asymp_{\beta,L}y_{Q,n}$.  If the constant $c_0$ in \cref{prop:explicit-spectral-rates}(a) is sufficiently small and $n$ is sufficiently large, then $d+1\le n/2$ and
\[
\kappa_{d+1,n}
\ge
A_{\beta,L}(n+1)\sqrt Q.
\]
Consequently, the Hellinger radius in \eqref{eq:spectral-modulus-lower} is at most $Q^{-1/2}$.  Monotonicity of the modulus, \cref{thm:scalar-modulus}, and $d\asymp\sqrt{n\Lambda_{Q,n}}$ yield
\[
\calR^{\mathrm{dec}}_{Q,n}(\beta,L)
\gtrsim_{\beta,L}
(n\Lambda_{Q,n})^{-\beta/2}.
\]
The comparison $\Lambda_{Q,n}\asymp_{\beta,L}y_{Q,n}$ proves \eqref{eq:spectral-explicit-lower}.  When $\beta>1/2$, apply \eqref{eq:spectral-modulus-lower-L2} instead and set $\Lambda_{Q}:=\log(A_{\beta,L}\sqrt Q)$.  Then $\Lambda_Q\asymp_{\beta,L}y_Q^{(2)}$, and the same argument proves the refined assertion.
\end{proof}

\begin{proof}[Proof of the upper assertion in \cref{prop:explicit-spectral-rates}]
For $d\le n/2$, the upper half of \eqref{eq:kappa-asymp} gives $\log\kappa_{d,n}\le C_\kappa d(d+1)/n$.  Choose
\[
d=\left\lfloor a\sqrt{nx_{Q,n}}\right\rfloor\vee1
\]
with $a>0$ small enough that $C_\kappa d(d+1)/n\le x_{Q,n}/4$.  Reducing $c_0$ if necessary ensures $d\le n/2$, while $x_{Q,n}\ge1$ gives $d\asymp\sqrt{nx_{Q,n}}$.  The spectral upper bound \eqref{eq:spectral-upper} therefore yields a bias term of order $(nx_{Q,n})^{-\beta/2}$ and a stochastic term bounded by
\[
\frac{\sqrt{n+1}\,\kappa_{d,n}}{\sqrt Q}
\lesssim
\frac{\sqrt n\,e^{x_{Q,n}/4}}
{n^{(\beta+1)/2}e^{x_{Q,n}/2}}
=
n^{-\beta/2}e^{-x_{Q,n}/4}.
\]
Since $e^{-x/4}\le C_\beta x^{-\beta/2}$ for $x\ge1$, the stochastic term has the same stated order, proving \eqref{eq:intermediate-upper}.
\end{proof}

\end{document}